 \journal{Applied Numerical Mathematics}
\newtheorem{theorem}{Theorem}
\newtheorem{lemma}{Lemma}
\newtheorem{remark}{Remark}%
\newtheorem{definition}{Definition}%
\newtheorem{hypothesis}{Hypothesis}
\newtheorem{strategy}{Adaptive scheme}
\newtheorem{sampling}{Sampling method}
\newproof{proof}{Proof}
\begin{document}


\begin{frontmatter}

\title{Weak variable step-size schemes for stochastic differential equations 
based on controlling conditional moments
\tnoteref{fnSup}
}

\tnotetext[fnSup]{Supported by Universidad de Concepci\'on project VRID-Enlace 218.013.043-1.0}

\author[UdeC]{Carlos M. Mora}
\ead{cmora@ing-mat.udec.cl}

\author[ICIMAF]{Juan Carlos Jimenez}
\ead{jcarlos@icimaf.cu}

\author[UdeC]{Monica Selva}
\ead{mselva@ing-mat.udec.cl}

\address[UdeC]{Departamento de Ingenier\'{\i}a Matem\'{a}tica, 
Facultad de Ciencias F\'{\i}sicas y Matem\'{a}ticas, Universidad de Concepci\'{o}n,
Casilla 160 C, Concepci\'{o}n, Chile.}

\address[ICIMAF]{Departamento de Matem\'atica Interdisciplinaria,
Instituto de Cibern\'{e}tica, Matem\'{a}tica y F\'{i}sica, 
La Habana, Cuba
}

\begin{abstract}
We address the weak numerical solution of stochastic differential equations driven by independent Brownian motions
(SDEs for short).
This paper develops a new methodology to design adaptive strategies for determining automatically the step-sizes of
the numerical schemes that compute the mean values of smooth functions of the solutions of SDEs.
First, 
we introduce a general method for constructing variable step-size weak schemes for SDEs, 
which is based on controlling the match between the first conditional moments  of the increments of the numerical integrator 
and the ones corresponding to an additional weak approximation.
To this end,
we use certain local discrepancy functions that do not involve sampling random variables. 
Precise directions for designing suitable discrepancy functions and for selecting starting step-sizes are given.
Second, 
we introduce a variable step-size Euler scheme,
together with a variable step-size second order weak scheme
via extrapolation. 
Finally,  
numerical simulations are presented to show the potential of the introduced variable step-size strategy 
and the adaptive scheme to overcome  known instability problems of the conventional fixed step-size schemes in the computation of diffusion functional expectations.
\end{abstract}

\begin{keyword}

Adaptive time-stepping \sep stochastic differential equation \sep numerical solution
\sep  weak error \sep Monte-Carlo method \sep  Euler scheme

\MSC[2010] 65C30 \sep 65C05 \sep 60H35 \sep 60H10

\end{keyword}

\end{frontmatter}



\section{Introduction}

In this paper,
we introduce a new methodology to design adaptive strategies for 
the computation of expected values of functionals of It\^o stochastic differential equations (SDEs for short).
We focus on the automatic calculation of  the mean value of $ \varphi \left(X_{T}\right) $
with $\varphi : \mathbb{R}^{d} \rightarrow \mathbb{R}$  smooth, $T > 0$,
and 
\begin{equation}
\label{eq:SDE}
X_{t}
=
X_{0} 
+ \int_{0}^{t}b\left( s, X_{s}\right)  ds+ \sum_{k=1}^{m}\int_{0}^{t}\sigma_k \left( s, X_{s}\right)  dW^{k}_{s} 
\hspace{1cm}
\text{for all }t \in \left[ 0, T \right] .
\end{equation}
Here,
$b, \sigma_1, \ldots, \sigma_m :  \left[ 0, T  \right] \times \mathbb{R}^{d}\rightarrow\mathbb{R}^{d}$  are 
locally Lipschitz smooth functions,
$W^1,\ldots,W^m$ are independent Brownian motions on 
a filtered complete probability space 
$\left( \Omega ,\mathfrak{F}, \left(\mathfrak{F}_{t}\right) _{t\geq 0},\mathbb{P}\right) $,
and 
the unknown $X_t $ is  an adapted $\mathbb{R}^{d}$-valued stochastic process with continuous trajectories.
Let $ \left( Y_n \right)_{n\geq 0}$ be a one-step numerical scheme solving \eqref{eq:SDE} at nodes $\left( \tau_n \right)_{n\geq 0} $,
where $\left( \tau_n \right)_{n\geq 0} $ is a random discretization of $\left[ 0, T \right] $
(see Section \ref{subsec:BasicAssumptions} for details).
This article addresses the automatic selection  of the step-sizes $\tau_{n+1} - \tau_{n}$ for which,  roughly speaking, 
$Y_n$ is an appropriate weak approximation of $X_{ \tau_{n}}$.

It is known that 
many complex initial value problems for ordinary differential equations (ODEs) are solved efficiently 
by controlling effectively the local discretization  errors 
--the error committed in one step of the numerical integration--
(see, e.g., \cite{Shampine2005,Shampine2003}).
In contrast, 
variable time-stepping schemes that control  the global error are generally considered computationally expensive
(see, e.g., \cite{Shampine2005} for a deeper discussion). 
In the numerical solution of SDEs,
many variable step-size strategies for schemes that approximate the trajectories of the solution of  \eqref{eq:SDE}
(strong approximations) have been proposed  by extending the local error trajectory-based  approaches developed for integrating ODEs. 
For instance, there is
the strategy of halving or doubling the current step size (see, e.g.,  \cite{GainesLyons1997}), 
and other strategies based on embedded methods (see, e.g.,  \cite{BurrageBurrage2002}), 
and on predictive-integral (PI) controllers (see, e.g., \cite{BurrageHerdianaBurrage2004,IlieJacksonEnright2015}).
On the other hand,
strong integrators that control adaptively the  numerical stability, by using the drift component of the SDE,
have been developed by, e.g., \cite{LambaMattinglyStuart2007,FangGiles2020,KellyLord2016,KellyLord2022}.
The coupling between the standard Multilevel Monte Carlo method (see, e.g., \cite{Giles2015,Giles2018} for reviews)
and  the schemes controlling adaptively the  numerical stability has been developed in, e.g., \cite{FangGiles2020,KellyLord2016}.

For weak numerical integration of SDEs, 
Szepessy, Tempone and Zouraris \cite{Szepessy2001} introduced 
two adaptive time-stepping strategies 
for the Euler-Maruyama scheme that are based on the computation of leading-order terms of a-posteriori  estimates
of the global weak error via Monte Carlo simulations
(see also, e.g., \cite{MerleProhl2021,MoonSzepessyTemponeZouraris2005,Mordecki2008} for further developments).
The algorithms of \cite{Szepessy2001} start by 
sampling the Euler-Maruyama scheme with an initial time discretization given by the user.
Then,
\cite{Szepessy2001} constructs a recursive sequence of partitions of $\left[ 0, T \right]$
by halving the step size in some nodes of the previous time discretization.
In \cite{Hoel2014} the global variable step-size schemes introduced by \cite{Szepessy2001}
are implemented to be sampled by means of a Multilevel Monte Carlo method.

The adaptive methods based on a-posteriori estimates
use estimations of global errors to select the step-sizes, 
a procedure that has experienced difficulties in dealing with ODEs.
Motivated partially by the fact that the codes commonly used to solve initial value problems for ODEs 
are based on controlling the local error,
this paper develops the design of adaptive algorithms for selecting the step-sizes of weak schemes for SDEs 
that are based on local discrepancy functions. 
In this direction, R\"ossler \cite{Rossler2004} extended straightforwardly 
the conventional step-size control of embedded schemes for ODEs 
to get  a variable deterministic time discretization  $\left( \tau_n \right)_{n=0,\ldots,N} $
(see also, e.g., \cite{KupperLehnRossler2007,Valinejad2010}).
To this end, 
\cite{Rossler2004} combines a pair of embedded stochastic Runge-Kutta schemes
with samples generated by Monte Carlo simulations
to estimate the ``local error"
$
\left\vert 
 \mathbb{E} \left(  \varphi \left(    X_{\tau_{n+1}} \left( \tau_{n} ,  Y_{n}  \right)  \right)   \diagup \mathfrak{F}_{\tau_n}  \right)
 -
\mathbb{E} \left(  \varphi \left(  Y_{n+1}\right)  \diagup \mathfrak{F}_{\tau_n} \right)
\right\vert ,
$
where
$ \left( X_{t} \left( \tau_{n} ,  Y_{n}  \right) \right)_{ t \in  \left[  \tau_{n}, T \right] }$ is  from now on the solution of 
\eqref{eq:SDE} with initial condition $Y_{n} $ at $\tau_{n}$.
In \cite{Rossler2004,KupperLehnRossler2007,Valinejad2010} the starting step-size is given by the user.
In the framework of the continuous-discrete estimation problem of the filtering theory,
\cite{Jimenez2019} develops an adaptive filter of minimum variance
that uses, 
between consecutive observation times,
the weak local linearization scheme given in \cite{JimenezMoraSelva2017}
(see also \cite{Carbonell2006}),
together with an adaptive strategy controlling the predictions for the first two conditional moments of the continuous state equation that does not involve sampling random variables.

In this paper,
we develop  a general methodology for determining automatically  the step-sizes 
$\tau_{n+1} - \tau_n$ of the scheme $ \left( Y_n \right)_{n\geq 0}$  solving \eqref{eq:SDE}
so that some measure of  a good match between the conditional distributions of  $Y_{n+1}$ and $ X_{\tau_{n+1}} \left( \tau_{n} ,  Y_{n}  \right)$, given $Y_{n} $,
is smaller than a tolerance provided by the user.
Namely,
inspired by \cite{Jimenez2019} and by the variable step-size strategies based on embedded schemes for ODEs, 
in Section \ref{sec:GeneralStrategyP} we introduce 
a new method for constructing  variable step-size weak schemes for SDEs.
The heart of the method is to control the matching between the first conditional moments of embedded pairs of weak approximations  by means of discrepancy functions that do not involve sampling random variables.
This allows us to design solvers computationally much faster than those mentioned above with a promising performance in practical problems. 
Roughly speaking, we determine automatically the step-size $\tau_{n+1} - \tau_{n}$ of the one-step numerical scheme $ \left( Y_n \right)_{n\geq 0}$
by keeping a weighted norm of estimates of the conditional expectations 
$
 \mathbb{E} \left(  \hat{Y}_{n+1}   \diagup \mathfrak{F}_{\tau_n} \right)
 -
  \mathbb{E} \left(  Y_{n+1}  \diagup \mathfrak{F}_{\tau_n} \right)
$
and
\[
 \mathbb{E} \left(  \left(   \hat{Y}_{n+1}   -   Y_{n} \right)  \left(   \hat{Y}_{n+1}   -   Y_{n} \right)^{\top}   \diagup \mathfrak{F}_{\tau_n} \right)
 -
  \mathbb{E} \left(  \left(   Y_{n+1}  -   Y_{n} \right)  \left(   Y_{n+1}  -   Y_{n} \right)^{\top}   \diagup \mathfrak{F}_{\tau_n} \right)
\]
within the range of a threshold given by the user,
where $\hat{Y}_{n+1} $ is an auxiliary weak approximation of $X_{ \tau_{n+1} } \left( \tau_{n} ,  Y_{n}  \right)$.
We provide directions for designing suitable discrepancy functions.

In Section \ref{sec:VariableStepSizeEM},
we show how to use the general method introduced in Section \ref{sec:GeneralStrategyP}.
We consider the Euler-Maruyama scheme
\begin{equation}
\label{eq:Euler-Maruyama}
Y_{n+1} 
=
Y_{n} 
+  b \left( \tau_n ,  Y_{n} \right) \left( \tau_{n+1}  - \tau_{n} \right) 
+ \sum_{k=1}^{m} \sigma_k \left(  \tau_n ,  Y_{n}  \right)  \left( W^k_{\tau_{n+1}} - W^k_{\tau_n} \right) ,
\end{equation}
where 
 $\left( \tau_n \right)_{n\geq 0} $ is a random discretization of $\left[ 0, T \right] $
(see Section \ref{subsec:BasicAssumptions} for details).
Then,
in Subsections \ref{sec:BasicLossFunction} and \ref{sec:adaptiveEuler-Maruyama} 
we design an adaptive algorithm for selecting the step-size $\tau_{n+1}-\tau_n$ of \eqref{eq:Euler-Maruyama},
which is in itself  important for the applications.
To this end,
as  the additional approximation $\hat{Y}_{n+1}$ we select a second order  weak It\^o-Taylor approximation
 of $X_{ \tau_{n+1} } \left( \tau_{n} ,  Y_{n}  \right)$,
 i.e., $\hat{Y}_{n+1}$ is the approximation \eqref{eq:SecondOrderSchemeN} given below.
The step-sizes $\tau_{n+1} - \tau_{n}$  are computed automatically  
without sampling the random variables  $ Y_{n+1}$ and $  \hat{Y}_{n+1} $,
and without employing any accept/reject algorithm
(usual  in the adaptive integrators for ODEs).
Subsection \ref{subsec:Extrapolation} provides a variable step-size second order weak scheme.
In the spirit of the local extrapolation procedure used in ODEs (see, e.g., \cite{HairerNorsettWanner1993,Shampine1994}),
we estimate  $\tau_{n+1}- \tau_{n}$ as in Subsection \ref{sec:adaptiveEuler-Maruyama},
but we compute  the numerical solution of \eqref{eq:SDE} from $ \tau_{n}$ to 
$\tau_{n+1}$ with the higher-order numerical method  $\hat{Y}_{n+1}$.

The choice of  the initial step-size $\tau_1$ is a critical stage in 
variable step-size methods for ODEs  
(see, e.g., \cite{GladwellShampineBrankin1987,HairerNorsettWanner1993}).
In Section \ref{sec:Starting},
we introduce a general procedure for the automatic selection of the starting step-size
based on controlling the size of the first two moments of $ Y_{1} - Y_0$.

We illustrate the performance of the new adaptive schemes by means of  numerical experiments with four  benchmark SDEs.
The new adaptive schemes reduce appropriately the step-sizes of the schemes as the tolerances parameters become smaller,
and greatly overcome the accuracy and stability of the Euler and second order Taylor schemes with fixed step-size.
The very good performance of the new adaptive strategy
is brought out in a comparison of the new adaptive strategy 
with the stochastic strategy given by  \cite{Szepessy2001}.


\section{ Preliminaries}


\subsection{Notation}

We use the symbols $  \left\Vert \cdot  \right\Vert_{ \mathbb{R}^d }$ and $\left\Vert \cdot \right\Vert_{\ell^{p}}$ 
to denote weighted  $\ell^{p}$  norms on $\mathbb{R}^d$, with $p   \in   \mathbb{N}  \cup \left\{ + \infty \right\}$.
We write $\left\Vert \cdot \right\Vert $ for the Euclidean norm on $\mathbb{R}^d$.
The $\left(i , j \right)$ component of the matrix $A$ is denoted by $ A^{i,j}$,
and
we represent the elements of $ \mathbb{R}^{d}$ as column vectors.
By $\left\Vert \cdot \right\Vert_{\mathbb{R}^{d \times d}} $ we mean  a norm 
on the space of all real matrices of order $d \times d$
that satisfies 
$
\left\Vert  \left(  A^{i,j}  \right)_{i,j = 1 , \ldots, d} \right\Vert_{\mathbb{R}^{d \times d}} 
=
\left\Vert \left( \left\vert A^{i,j} \right\vert \right)_{i,j = 1 , \ldots, d} \right\Vert_{\mathbb{R}^{d \times d}} 
$
for any $ \left(  A^{i,j}  \right)_{i,j = 1 , \ldots, d} \in \mathbb{R}^{d \times d}$, and  
\begin{equation}
\label{eq:4.4}
 \left\Vert x \, y ^{\top}  \right\Vert_{\mathbb{R}^{d \times d}} 
 \leq
 \left\Vert x   \right\Vert_{ \mathbb{R}^d }  \left\Vert y   \right\Vert_{ \mathbb{R}^d }
 \qquad  \qquad 
 \text{for all } x, y \in \mathbb{R}^{d}  .
\end{equation}
Examples of $\left\Vert \cdot \right\Vert_{\mathbb{R}^{d \times d}} $,
where the inequality in \eqref{eq:4.4}  becomes an equality,
are the Frobenius norm,  
the element-wise max matrix norm 
$
\left\Vert  \left(  A^{i,j}  \right)_{i,j = 1 , \ldots, d} \right\Vert_{\infty} 
= 
\max_{i,j = 1 , \ldots, d}  \left\vert A^{i,j} \right\vert
$,
and the $\ell_{1,1}$ entry-wise matrix norm 
$
\left\Vert  \left(  A^{i,j}  \right)_{i,j = 1 , \ldots, d} \right\Vert_{1,1} 
= 
\sum_{i,j = 1 , \ldots, d}  \left\vert A^{i,j} \right\vert
$,
provided that  $ \left\Vert  \cdot   \right\Vert_{ \mathbb{R}^d }$ is, respectively, 
the Euclidean norm  $ \left\Vert  \cdot   \right\Vert $, the max norm, and the $\ell_1$ norm.

From now on, 
$K$ (resp. $K\left( \cdot\right) $ and $q$) stands for different non-negative real numbers 
(resp. non-negative increasing functions and natural numbers) 
that are independent of the discretizations of $\left[ 0, T \right]$. 
We use the standard multi-index notation.
In particular, for any multi-index 
$
\alpha = \left( \alpha_1, \ldots, \alpha_d \right) \in \left(\mathbb{Z}_+ \right)^d
$
we set
$
\left\vert \alpha \right\vert = \alpha_1 + \cdots + \alpha_d 
$,
$
\alpha  ! = \alpha_1 !  \cdots \alpha_d !
$,
$
x^{\alpha} = \left( x^1\right)^{\alpha_1} \cdots  \left( x^d \right)^{\alpha_d}
$,
and
$
\partial_x^{ \alpha} = \left(\frac{\partial}{\partial x^1}\right)^{ \alpha_1} \ldots \left(\frac{\partial}{\partial x^d}\right)^{ \alpha_d} 
$.
The space $\mathcal{C}_{P}^{L}\left( \left[ 0, T \right] \times \mathbb{R}^{d},\mathbb{R}\right) $ 
is the set of all 
$
f  :
\left[ 0, T \right] \times \mathbb{R}^{d}\rightarrow \mathbb{R}
$ 
such that 
$ \partial_x^{ \alpha} f $ is continuous and 
$
\left\vert \partial_{x}^{ \alpha } f \left( t,x\right)
\right\vert \leq K  \left( 1+\left\Vert x\right\Vert ^{q}\right) 
$,
for all
$ t \in \left[ 0, T \right]$ and $x\in\mathbb{R}^{d}$,
whenever $\left\vert \alpha \right\vert \leq L$.
The function 
$
f : \mathbb{R}^{d} \rightarrow \mathbb{R}
$
belongs to
$
\mathcal{C}_{P}^{L}\left(\mathbb{R}^{d},\mathbb{R}\right)
$
 if and only if
$
\left( t, x \right) \mapsto f \left( x \right)
$
is in
$
\mathcal{C}_{P}^{L}\left( \left[ 0, T \right] \times \mathbb{R}^{d},\mathbb{R}\right)
$.

\subsection{Basic assumptions on the SDE}

The SDE \eqref{eq:SDE}  has a unique continuous strong solution up to an explosion time,
because 
$b$ and $\sigma_k$ are assumed to be locally Lipschitz functions.
 (see, e.g., \cite{Mao2011,Protter2005}).
We suppose that:

\

\begin{hypothesis}
\label{hyp:SDE}
\
\begin{itemize}
\item[(a)] 
For all $j = 1, \ldots, d$
the functions $b^j, \sigma^j_1, \ldots, \sigma^j_m$ belong to $\mathcal{C}_{P}^{5}\left( \left[ 0, T \right]  \times \mathbb{R}^{d}, \mathbb{R} \right)$
and 
$\partial_t  b^j,  \partial_t \sigma^j_1, \ldots, \partial_t  \sigma^j_m$ 
are in 
$\mathcal{C}_{P}^{1}\left( \left[ 0, T \right]  \times \mathbb{R}^{d},\mathbb{R}\right) $.

\item[(b)] For all $p \in \mathbb{N}$,
$\mathbb{E} \left(  \left\Vert  X_{0} \right\Vert^{p} \right) < + \infty$.

\item[(c)]  
The equation \eqref{eq:SDE}  has a unique continuous strong solution on the interval $\left[ 0, T \right]$.
Furthermore, 
for any $p \in \mathbb{N}$ there exist $q \in \mathbb{N}$ and $K \geq 0$ satisfying
\begin{equation}
 \label{eq:0.1}
\mathbb{E} \left(  \left\Vert   X_t \right\Vert^{p} \right)
 \leq
 K  \left( 1 + \mathbb{E} \left(  \left\Vert   X_{0} \right\Vert^{q } \right) \right) 
 \hspace{2cm}
 \text{for all } t \in \left[ 0, T \right] .
\end{equation}

\end{itemize}
\end{hypothesis}

\begin{remark}
The condition (c) of Hypothesis \ref{hyp:SDE} holds, e.g., in case  (see, e.g., \cite{Mao2011})
\begin{equation}
 \label{eq:MonotoneCondition}
 x^{\top} b \left( t, x \right)  
+
\frac{1}{2} \sum_{k=1}^{m}  \left\Vert \sigma_k \left( t,  x \right)  \right\Vert^2
\leq 
K \left( 1 +   \left\Vert  x \right\Vert^2 \right)  
\qquad 
\text{for all }x \in \mathbb{R}^d \text{ and } t \in \left[ 0, T \right] .
\end{equation}
\end{remark}

Since we are interested in the weak numerical solution of \eqref{eq:SDE},
we consider the backward Kolmogorov equation 
\begin{equation}
\label{eq:Kolmogorov}
\begin{cases}
\partial_t u \left( t ,x\right) 
 =
-\mathcal{L}\left( u \right) \left( t ,x\right)  
&
\text{if } t \in \left[ 0,T \right] \text{ and } x\in \mathbb{R}^{d},
\\
u \left( T,x\right) = \varphi \left( x\right)  
& \text{ if } x \in \mathbb{R}^{d} ,
\end{cases}
\end{equation}
where 
$
\mathcal{L} 
=
\sum_{k=1}^{d}b^k \frac{\partial}{\partial x^k}  
+
\frac{1}{2} \sum_{k, \ell =1}^{d}  \left( \sum_{j =1}^{m} 
\sigma_j^k  \sigma_j^{\ell} \right) \frac{\partial^2}{\partial x^k \partial x^{\ell}} 
$.
The following assumption on the regularity of the solution of \eqref{eq:Kolmogorov} 
is commonly verified in the proofs of  the linear rate of weak convergence of the numerical schemes for \eqref{eq:SDE}
(see, e.g., \cite{GrahamTalay2013,Kloeden1992,Krylov1999,Milstein2004,Mora2017}).

\begin{hypothesis}
\label{hyp:EDP}
The function $\varphi$ belongs to $\mathcal{C}_{P}^{5}\left( \mathbb{R}^{d},\mathbb{R}\right) $.
The partial differential equation  \eqref{eq:Kolmogorov} has a solution 
$u \in \mathcal{C}^{1,4} \left(\left[ 0, T \right] \times \mathbb{R}^{d},\mathbb{R}\right) $
such that  
$
u \in  \mathcal{C}_{P}^{ 5 } \left( \left[ 0, T \right] \times \mathbb{R}^d,\mathbb{R}\right)
$
and 
$
 \frac{\partial }{\partial t}\partial _{x}^{ \alpha } u
=
-\partial_{x}^{ \alpha }\mathcal{L}\left( u \right) 
$
for any multi-index  $\alpha $ with $\left\vert \alpha \right\vert \leq 3$.
\end{hypothesis}

\begin{remark}
\label{re:Hyp2}
Adopt  Hypothesis \ref{hyp:SDE}.
Then, Hypothesis \ref{hyp:EDP} holds  if  
the coefficients of \eqref{eq:SDE} are Lipschitz continuous functions with linear growth
(see, e.g., \cite{GrahamTalay2013,Kloeden1992,Krylov1999,Milstein2004}),
or in case $\mathcal{L}$ is elliptic (see, e.g., \cite{Cerrai2001}).
If $\mathcal{L}$ is hypoelliptic,
then 
any solution of the Kolmogorov equation is smooth, even for locally Lipschitz SDE \eqref{eq:SDE}
(see, e.g., \cite{Hairer2015,Hormander1967}).
In the locally Lipschitz scalar case, 
the regularity of the solution of the backward Kolmogorov equation \eqref{eq:Kolmogorov} on 
$\left[ 0, T \right] \times \left] 0, + \infty \right[$,
with   $d=1$, $m=1$, $\sigma_1 \left( x \right) = x^{\alpha}$ and $ \varphi$ bounded,
is proved by \cite{Bossy2021} 
in case $\alpha > 1$ and  
$b \left( x \right) \leq K_1 x - K_2 x^{2 \, \alpha -1} +b \left( 0 \right) $ for all $x > 0$,
where $K_1, K_2 > 0$.
In general, \eqref{eq:Kolmogorov} does not have a classical solution (see  \cite{Hairer2015}).
\end{remark}

\subsection{Basic assumptions on the numerical scheme}
\label{subsec:BasicAssumptions}

We design adaptive schemes that satisfy the conditions of Definition \ref{def:Discretizacion},
where, throughout this paper, 
$\left( \tau_n \right)_{n \in \mathbb{Z}_+}$ is a time discretization of $\left[ 0 , T \right]$,
$Y_{n}$ is a weak approximation of $X_{\tau_{n}}$,
and
\[ 
\mathcal{N} \left( \omega \right) = \min \left\{ n \in \mathbb{N}: \tau_{n}  \left( \omega \right) \geq T \right\} 
\qquad \qquad \text{for all } \omega \in \Omega.
\]

\begin{definition}
\label{def:Discretizacion}
 A set  of  random variables $ \tau_n$ and $Y_n$,
where $n \in \mathbb{Z}_+$, 
is called an admissible adaptive strategy (in $\left[ 0, T \right]$) if 
$\tau_{n+1}$, $Y_{n}$ are $\mathfrak{F}_{\tau_{n}}$-measurable for any $n \geq 0 $,
$\tau_0 = 0$,
and 
there exists a random variable $\mathcal{N}$ with values in $\mathbb{N}$ such that
$Y_{n} \left( \omega \right) =  Y_{ \mathcal{N}\left( \omega \right)} \left( \omega \right) $
for any  $n \geq \mathcal{N}\left( \omega \right)$,  and
\[
\begin{cases}
  \tau_{n} \left( \omega \right) = T 
  & \text{if } n \geq \mathcal{N}\left( \omega \right)
  \\
  \tau_{n} \left( \omega \right) <  \tau_{n+1} \left( \omega \right)
  & \text{if }
   n < \mathcal{N}\left( \omega \right)
\end{cases}
\qquad \qquad \text{for all } \omega \in \Omega.
\]

\end{definition}

Similarly to the numerical solution of ODEs,
we construct variable step-sizes $\tau_{n+1} \left( \omega \right) -  \tau_{n} \left( \omega \right) $
such that 
\[
\tau_{n+1} \left( \omega \right) -  \tau_{n} \left( \omega \right)  
\geq
\Delta_{min}
\qquad \qquad \text{for any } n < \mathcal{N}\left( \omega \right) ,
\]
where the minimum step-size $\Delta_{min}$ is given by the user.
Thus,
our adaptive schemes satisfy the following assumption. 

\begin{hypothesis}
 \label{hyp:AcotN}
 Let $ \tau_n$, $Y_n$, and $ \mathcal{N}$ be as in Definition \ref{def:Discretizacion}.
 We suppose that $\sup \left\{ \mathcal{N} \left( \omega \right) :  \omega \in \Omega  \right\} < + \infty $.
\end{hypothesis}

Next, we present a straightforward extension of 
the standard conditions on a numerical scheme  for \eqref{eq:SDE}
to prove that its rate of weak convergence is equal to $1$
(see, e.g., \cite{Kloeden1992,Milstein2004,Mora2017}).

\begin{hypothesis}
\label{hyp:WeakOrden1}
 Let  $\mathcal{A}$ be a collection of admissible  adaptive strategies.
We assume that for  all $\left(\tau_{n} , Y_{n} \right)_{n \geq 0 }$  belonging to $\mathcal{A}$ we have:
\begin{itemize}

\item[(a)] 
For any $p \in \mathbb{N}$ there exist $q \in \mathbb{N}$ and $K \geq 0$ such that 
$
\mathbb{E} \left(  \left\Vert   Y_{n} \right\Vert^{p} \right)
 \leq
 K  \left( 1 + \mathbb{E} \left(  \left\Vert   Y_{0} \right\Vert^{q } \right) \right) 
 $
for all $n \geq 0$.

\item[(b)]  
For every $p \in \mathbb{N}$ there exist $q \in \mathbb{N}$ and $K \geq 0$ 
such that 
\[
\mathbb{E} \left(  \left\Vert   Y_{n+1}  -  Y_{n} \right\Vert^{2p}
\diagup  \mathfrak{F}_{\tau_{n}}
 \right)
 \leq
 K   \left( 1 +  \left\Vert   Y_{n} \right\Vert^{q } \right) 
 \left(\tau_{n+1} - \tau_n \right)^p 
 \qquad \text{for all } n \geq 0.
\]

\item[(c)]   
For every multi-index $\alpha $ with $\left\vert \alpha \right\vert \leq 3$,
there exist $q \in \mathbb{N}$ and $K \geq 0$ such that
\[
\left\vert
\mathbb{E} \left(  
\left(   Y_{n+1} -   Y_{n} \right)^{\alpha}
\diagup  \mathfrak{F}_{\tau_{n}} \right)
-
\mathbb{E} \left(
\left(  Z _{n+1}\left(  Y_{n} \right) -   Y_{n} \right) ^{\alpha}
 \diagup  \mathfrak{F}_{\tau_{n}} \right)
\right\vert
 \leq
K \left( 1+\left\Vert  Y_n \right\Vert ^{q}\right) 
\left(\tau_{n+1} - \tau_n \right)^2 
\]
for all $ n \geq 0 $,
where 
\[
Z_{n+1} \left( x \right) 
: = 
x
+ b\left(  \tau_{n} , x  \right) \left( \tau_{n+1} - \tau_{n} \right)
+ \sum_{k=1}^{m} \sigma_k \left(  \tau_{n}, x \right) \left(W^{k}_{\tau_{n+1}} - W^{k}_{\tau_{n}} \right) 
.
\]

\item[(d)] 
For any $ \phi \in C_{P}^{4} \left(  \mathbb{R}^{d},\mathbb{R}\right)  $
there exist $q \in \mathbb{N}$ and $K \geq 0$ such that
\[
\left\vert \mathbb{E} \phi \left(X_{0} \right) 
-  \mathbb{E}   \phi \left(   Y_{0}  \right)  
 \right\vert 
 \leq K\left(  1+ \left\Vert X_{0} \right\Vert ^{q}\right) 
 \sup_{ k \geq 0, \,  \omega \in \Omega} \left\{  \tau_{k+1}   \left( \omega \right) -   \tau_{k}   \left( \omega \right)   \right\} .
 \]
\end{itemize}
Here, the constants  $q \in \mathbb{N}$ and $K \geq 0$ depend on $\mathcal{A}$,
and do not depend on $\left( \tau_{n}  , Y_{n} \right)_{n \geq 0 }$.
\end{hypothesis}

\begin{remark}
\label{re:Hyp4}
 If $b$, $\sigma_k$ are  Lipschitz functions with all their partial derivatives having at most polynomial growth at infinity,
then the Euler scheme \eqref{eq:Euler-Maruyama} satisfies Hypothesis \ref{hyp:WeakOrden1}.
In the case that $b$ is a non-globally Lipschitz autonomous drift satisfying a one-side linear growth condition,
the Euler scheme fulfills the requirement (a) of Hypothesis  \ref{hyp:WeakOrden1}
whenever  
\begin{equation}
 \label{eq:CondHyp4}
 Y_{n}^{\top} b \left( Y_{n} \right)  
+
\frac{1}{2} \left(  \tau_{n+1} - \tau_{n} \right)  \left\Vert  b \left( Y_{n} \right)    \right\Vert^2
\leq 
K \left( 1 +   \left\Vert  Y_{n} \right\Vert^2 \right) ,
\end{equation}
and  $ \tau_{n+1} - \tau_{n} $ only depends on $ Y_{n} $ 
(see, e.g., \cite{FangGiles2020}).
\end{remark}

For completeness, 
we next establish the linear rate of convergence of the weak  error 
$
\left\vert \mathbb{E} \varphi \left(X_{T}\right) -  \mathbb{E} \varphi \left(  Y_{ \mathcal{N} }  \right)   \right\vert 
$
with respect to the maximum step-size
under Hypothesis \ref{hyp:SDE} - \ref{hyp:WeakOrden1}.
To this end, we use the classical weak convergence analysis introduced by Milstein and Talay.

\begin{theorem}
\label{theo:WeakConvergence}
Suppose that Hypotheses \ref{hyp:SDE} and \ref{hyp:EDP} hold.
Assume that $\mathcal{A}$ is a collection of admissible adaptive strategies satisfying 
Hypotheses \ref{hyp:AcotN} and  \ref{hyp:WeakOrden1}.
Then, there exist $q \in \mathbb{N}$ and $K \left( \cdot \right)$ such that
\[
 \left\vert   \mathbb{E} \varphi \left( X_{T} \right) - \mathbb{E} \varphi \left(  Y_{\mathcal{N}}\right) \right\vert 
 \leq 
 K \left( T \right) \left( 1 + \mathbb{E} \left( \left\Vert  X_{0} \right\Vert^q \right) \right) 
 \sup_{ k \geq 0, \,  \omega \in \Omega} \left\{  \tau_{k+1}   \left( \omega \right) -   \tau_{k}   \left( \omega \right)   \right\}
\]
for all $ \left( \tau_{k}  , Y_{k} \right)_{k\geq 0}$ belonging to $\mathcal{A}$.
\end{theorem}

\begin{proof} 
 Deferred to Section \ref{subsec:Proofth:theo:WeakConvergence}.
\end{proof}

\section{General strategy for adjusting the step-size} 
\label{sec:GeneralStrategyP}
This section introduces a new methodology for selecting the step-sizes of a numerical scheme that solves weakly \eqref{eq:SDE}.

\subsection{Choice of the step-size}
\label{sec:GeneralStrategy}

We present a general mechanism to select the step-sizes $ \tau_{k+1} -  \tau_{k} $ 
of the one-step numerical scheme $Y_k$.
Here, 
$  \mathbb{E} \varphi \left( Y_{k} \right)  \approx \mathbb{E} \varphi \left( X_{\tau_{k}} \right) $
whenever  $\varphi : \mathbb{R}^{d} \rightarrow \mathbb{R}$ is smooth,
and
$ \left( \tau_{k} \right)_{n \geq 0 }$ and $\left(Y_k \right)_{n \geq 0 }$ 
is an admissible adaptive strategy as defined in Section \ref{subsec:BasicAssumptions}.

Let  $ \tau_{n} $ and $ Y_{n} $ be known,
where $Y_{n}$ is $\mathfrak{F}_{\tau_{n}}$-measurable.
Next,
we present a new method for adjusting the step-size $\tau_{n+1} - \tau_{n}$,
which is based on controlling the matching of the first moments of $Y_{n+1} - Y_n$ to
those of $X_{t} \left( \tau_{n} ,   Y_{n}  \right) -  Y_{n}$, 
where
\begin{equation}
\label{eq:2.1}
 X_{t} \left( \tau_{n} ,   Y_{n}  \right)
 =
 Y_{n}
+ \int_{\tau_{n} }^{t} b\left( s, X_{s} \left( \tau_{n} ,   Y_{n} \right) \right)  ds
+ \sum_{k=1}^{m} \int_{\tau_{n}}^{t}\sigma_k \left( s, X_{s} \left( \tau_{n} ,   Y_{n}  \right) \right)  dW^{k}_{s} 
\qquad \quad \text{for all }  t \in \left[ \tau_{n}, T \right].
\end{equation} 
Assume that $\Delta$ is a  $\mathfrak{F}_{\tau_n}$-measurable positive random variable
such that $\tau_{n} + \Delta \leq T$.
Then, $\tau_{n} + \Delta$ is a stopping time.
By abuse of notation,
we define $Y_{n} \left( \tau_{n} + \Delta \right) $ to be the approximation of 
$X_{ \tau_{n} + \Delta} \left( \tau_{n} ,   Y_{n}  \right) $ obtained from applying to \eqref{eq:2.1}
one step of $Y$  with step-size $ \Delta $.
For example, 
if $Y_k $ is the Euler scheme \eqref{eq:Euler-Maruyama},
then 
\begin{equation}
 \label{eq:Euler-MaruyamaDelta}
Y_{n} \left( \tau_n + \Delta \right)
=
Y_{n} 
+  b \left( \tau_n ,  Y_{n} \right) \Delta
+ \sum_{k=1}^{m} \sigma_k \left(  \tau_n ,  Y_{n}  \right)  \left( W^k_{\tau_{n}+\Delta} - W^k_{\tau_n} \right)  .
\end{equation}

{\it
First,
consider an additional approximation  $\hat{Y}_{n} \left( \tau_{n} + \Delta \right) $ to $X_{ \tau_{n} + \Delta} \left( \tau_{n} ,   Y_{n}  \right) $
that is  different from the main approximation $Y_{n} \left( \tau_{n} + \Delta \right) $, 
and give 
estimators 
$e_{i , n} \left(  \Delta  \right)$ and $e_{i , j, n} \left(  \Delta  \right)$
of 
\begin{equation}
 \label{eq:3.11}
  \mathbb{E} \left(  \left(   \hat{Y}^i_{n}\left( \tau_{n} + \Delta \right)  -   Y^i_{n} \right)   \diagup \mathfrak{F}_{\tau_n} \right)
 -
  \mathbb{E} \left(  \left(   Y^i_{n} \left( \tau_{n} + \Delta \right)  -   Y^i_{n} \right)  \diagup \mathfrak{F}_{\tau_n} \right)
\end{equation}
and 
\begin{equation}
 \label{eq:3.12}
 \mathbb{E} \left( \prod_{k=i,j}  \left(   \hat{Y}^k_{n}\left( \tau_{n} + \Delta \right)  -   Y^k_{n} \right)   \diagup \mathfrak{F}_{\tau_n} \right)
 -
  \mathbb{E} \left(  \prod_{k=i,j}  \left(   Y^k_{n} \left( \tau_{n} + \Delta \right)  -   Y^k_{n} \right)  \diagup \mathfrak{F}_{\tau_n} \right) ,
\end{equation}
respectively,
such that the computations of $e_{i , n} \left(  \Delta  \right)$ and $e_{i , j, n} \left(  \Delta  \right)$
do not involve  sampling the random variables 
$ Y_{n}  \left( \tau_{n} + \Delta \right)$ and $  \hat{Y}_{n} \left( \tau_{n} + \Delta \right) $, 
where $i, j =1\ldots, d$.}

In order to advance from $Y_n$ to $Y_{n+1}$,
we would like to determine, roughly speaking,
a large enough step-size $\Delta_* > 0 $ such that 
for any $ i,j = 1 , \ldots, d$ the weighted estimators 
$
\mathfrak{p}_{i, n} \cdot e_{i , n} \left(  \Delta_*  \right)
$ 
and
$
\mathfrak{p}_{i,j, n} \cdot e_{i , j,  n} \left(  \Delta_*  \right)
$ 
remain within the range determined by the thresholds $ d_{i , n} \left( \Delta_* \right) $ and $ d_{i,j , n} \left( \Delta_* \right)$, respectively.
For example, one can  take  $\mathfrak{p}_{i , n} = 1$, 
$ \mathfrak{p}_{i , j, n}  = 1/2 $ and  the  thresholds per step
\begin{equation}
\label{eq:3.5}
 \left\{
\begin{aligned}
d_{i , n} 
& =  Atol_{i}+ Rtol_{i} \, \left\vert  Y_{n}^{i} \right\vert
\\ 
d_{i , j, n} 
& =
\left( \sqrt{ Atol_{i}}+ \sqrt{Rtol_{i}} \, \left\vert  Y_{n}^{i} \right\vert \right) \left( \sqrt{Atol_{j}}+ \sqrt{ Rtol_{j}} \, \left\vert  Y_{n}^{j} \right\vert \right)
\end{aligned}
\right.  ,
\end{equation}
where
$Atol_{i}$ and $Rtol_{i}$ are the absolute and relative tolerance parameters given by the user.
In Section  \ref{sec:LossFunctions},
we look more closely at the weights $\mathfrak{p}$ and the thresholds $ d $.

{\it Second,
find a  $\mathfrak{F}_{\tau_n}$-measurable positive random variable $\Delta_*$, as large as possible,
such that 
\begin{equation} 
\label{eq:GeneralAdaptive}
\emph{L}_n(\Delta_*) \leq  1 ,
\end{equation} 
where for all $\Delta > 0$ we set
\begin{equation} 
\label{eq:DefLossFunction}
\emph{L}_n \left( \Delta \right) =
\left\Vert \left( 
\left\Vert \left(  \frac{ \mathfrak{p}_{i , n} }{d_{i , n}  \left( \Delta \right) }  e_{i , n} \left(  \Delta  \right)  \right)_{i}  \right\Vert_{\mathbb{R}^d} ,
\left\Vert \left(  \frac{\mathfrak{p}_{i , j , n} }{d_{i , j, n}  \left( \Delta \right) }  e_{i , j,  n} \left(  \Delta  \right) \right)_{i , j}  \right\Vert_{\mathbb{R}^{d \times d}}
\right)
\right\Vert_{\mathbb{R}^2}  .
\end{equation} 
Here, 
$\mathfrak{p}_{i , n} $, $\mathfrak{p}_{i , j, n} $, $d_{i , n}  \left( \Delta \right)  $ and $d_{i , j,  n}  \left( \Delta \right)  $ 
are positive $ \mathfrak{F}_{\tau_n} $-measurable random variables, for all $\Delta > 0$. }
We use the function $ \Delta \mapsto  \emph{L}_n \left( \Delta \right)$  to measure in a practical way the discrepancy  between
$Y_{n+1}$ and $\hat{Y}_{n+1}$,
which approximate weakly $X_{  \tau_{n+1} } \left( \tau_{n} ,   Y_{n}  \right)$.

{\it Third,
in case $n \geq 1$, 
set  the next integration time
\begin{equation}
 \label{eq:3.3}
 \tau_{n+1} 
=
\tau_{n} 
+
\max \left\{  \Delta_{min}, 
\min \left\{ \Delta_{max}, \Delta_*,  \mathfrak{fac}_{max} \cdot  \left( \tau_{n} - \tau_{n-1} \right) \right\}
\right\}, 
\end{equation}
where 
--likewise in the numerical integration of ODEs (see, e.g., \cite{HairerNorsettWanner1993})-- 
the constant $\mathfrak{fac}_{max} > 1$ prevents the code from too large step-size increments, 
and 
$\Delta_{min}$ and $\Delta_{max}$ denote the minimum and maximum step-sizes  that are allowed by the user.
If $\tau_{n+1} \left( \omega \right) > T$, then we take  $\tau_{n+1} \left( \omega \right) = T$.}
In the numerical experiment we set $\Delta_{min}$ to be two times the distance from $1$ to the next larger double precision number.
Alternatively, we can choose  $\Delta_{max}/ \Delta_{min}$ equal to a parameter given by the user.
(see, e.g., \cite{KellyLord2016}).

\begin{remark}
\label{re:3.1}
In \eqref{eq:DefLossFunction}  we can also choose $e_{i , j, n} \left(  \Delta  \right)$
to be an estimator of  
\[
 \mathbb{E} \left( \hat{Y}^i_{n}\left(  \tau_n + \Delta \right)   \hat{Y}^j_{n}\left(  \tau_n + \Delta \right)   \diagup \mathfrak{F}_{\tau_n} \right)
 -
  \mathbb{E} \left( Y^i_{n} \left(  \tau_n + \Delta \right)   Y^j_{n} \left(  \tau_n + \Delta \right)    \diagup \mathfrak{F}_{\tau_n} \right)
.
\]
Then,
we  can use thresholds $d_{i , j, n} \left( \Delta \right)$ like   
$
Atol_{i,j} + Rtol_{i,j}  \left\vert   Y^i_{n}  \, Y^j_{n}  \right\vert 
$,
where $Atol_{i,j}$, $Rtol_{i,j}$ are the absolute and relative tolerance parameters for  $e_{i , j, n} \left(  \Delta  \right)$.
\end{remark}

\subsection{Design of local discrepancy functions}
\label{sec:LossFunctions}

Using heuristic arguments we now show how to design suitable discrepancy functions for  the step-size selection mechanism 
presented in Section \ref{sec:GeneralStrategy}.
As in Section \ref{sec:GeneralStrategy},
we consider a one-step numerical scheme $ \left( Y_k \right)_{k=0,\ldots, N}$
that approximates weakly the solution of \eqref{eq:SDE} at the mesh points $\tau_{k}$.
First,
we decompose the mean values of $ \varphi \left( Y_{N}\right) $ and $\varphi \left( X_{{\tau_{N}} } \right)$
in terms of, respectively,  
the increments $Y_{n+1} -   Y_{n}$ 
and 
$X_{ \tau_{n+1} } \left( \tau_{n} ,   Y_{n}  \right)  - Y_{n}$,
where 
$X_{ t } \left( \tau_{n} ,   Y_{n}  \right) $ is the solution of \eqref{eq:2.1}.
In both cases we apply Taylor's theorem to $u$,
together with techniques  from the weak convergence theory of numerical schemes for SDEs
(see, e.g., \cite{GrahamTalay2013,Kloeden1992,Milstein2004}).

\begin{theorem}
\label{theo:LocalExpansion}
Let Hypotheses \ref{hyp:SDE} and \ref{hyp:EDP} hold. 
Consider a set  $\mathcal{A}$ of admissible adaptive strategies
satisfying Hypotheses \ref{hyp:AcotN} and  \ref{hyp:WeakOrden1}.
Then, for any $ \left( \tau_{k}  , Y_{k} \right)_{k\geq 0}$ belonging to $\mathcal{A}$ 
we have:
\begin{equation}
 \label{eq:LD_Scheme}
 \mathbb{E} \varphi \left(  Y_{\mathcal{N}}\right) 
 =
 \mathbb{E} u \left(  0 ,  Y_{0}  \right)
 + 
  \mathbb{E} \sum _{n=0}^{  \mathcal{N} -1}   \left(  
\mathcal{T}_n \left(  Y_{n+1} \right) + \mathbb{E} \left(  R^Y_{n+1}  \diagup  \mathfrak{F}_{\tau_n} \right)
\right) 
 \end{equation}
and
\begin{equation}
 \label{eq:LD}
  \mathbb{E} \varphi \left( X_{T} \right) 
 =
 \mathbb{E} u \left(  0 , X_{0}  \right)
 + 
  \mathbb{E} \sum_{n=0}^{ \mathcal{N}-1}  \left(
\mathcal{T}_n \left(  X_{\tau_{n+1}} \left( \tau_{n} ,  Y_{n}  \right)  \right) +  \mathbb{E} \left(  R^X_{n+1}  \diagup  \mathfrak{F}_{\tau_n} \right) 
\right)
\end{equation}
where 
$X_{ t } \left( \tau_{n} ,   Y_{n}  \right) $ is the solution of \eqref{eq:2.1},
\[
\mathcal{T}_n \left( \xi \right)
=
 \partial_t u \left(  \tau_n ,  Y_{n}  \right)  \left( \tau_{n+1} - \tau_{n} \right)
 +
 \sum_{ \left\vert \alpha \right\vert = 1, 2}    \frac{1}{\alpha !} \partial_x^{ \alpha} u \left(  \tau_n ,  Y_{n}  \right)
 \mathbb{E} \left(  \left(  \xi -  Y_{n} \right)^{\alpha}  \diagup \mathfrak{F}_{\tau_n} \right) ,
\]
and 
$ R^Y_{n+1}$, $ R^X_{n+1} $ are $\mathfrak{F}_{\tau_{n+1}}$-measurable random variables 
satisfying 
\[
\max \left\{
 \left\vert  \mathbb{E} \left( R^Y_{n+1}  \diagup \mathfrak{F}_{\tau_{n}} \right) \right\vert ,
 \left\vert   \mathbb{E} \left( R^X_{n+1}  \diagup \mathfrak{F}_{\tau_{n}} \right) \right\vert
 \right\}
\leq 
 K  \left( 1 +  \left\Vert  Y_n \right\Vert^{q }  \right) \left( \tau_{n+1} - \tau_{n} \right)^2
\]
for all $n=0,\ldots, N-1$.
Here, 
$K > 0$ and $q \in \mathbb{N}$ do not  depend of the random discretization
$\left( \tau_{k}  , Y_{k} \right)_{k \geq 0 }$.
\end{theorem}

\begin{proof} 
 Deferred to Section \ref{subsec:Proofth:theo:LocalExpansion}.
\end{proof}

At the $\left(n+1 \right)$-integration step the values of $\tau_{n}$ and $  Y_{n}$ are known.
As a consequence of  Theorem \ref{theo:LocalExpansion},
we will select a  large enough step size $\tau_{n+1} - \tau_{n}$ such that 
$ \mathcal{T}_n \left(  Y_{n+1} \right) + \mathbb{E} \left(  R^Y_{n+1}  \diagup  \mathfrak{F}_{\tau_n} \right) $
is close to its desired value
$ 
\mathcal{T}_n \left(  X_{\tau_{n+1}} \left( \tau_{n} ,  Y_{n}  \right)  \right) 
+  \mathbb{E} \left(  R^X_{n+1}  \diagup  \mathfrak{F}_{\tau_n} \right) 
$.
Focusing on the difference between the terms of order  $O \left( \tau_{n+1} - \tau_{n} \right)$
we characterize the loss of accuracy of $Y_{n+1}$ by
\begin{equation}
\label{eq:LossFLocal}
\left\vert
 \sum_{ \left\vert \alpha \right\vert = 1, 2}  \frac{1}{ \alpha !}  \partial_x^{ \alpha} u \left(  \tau_n ,  Y_{n}  \right)
 \Big(
  \mathbb{E} \left(  \left(   Y_{n+1} -   Y_{n} \right)^{\alpha} \diagup \mathfrak{F}_{\tau_n} \right)
 -
  \mathbb{E} \left(  \left(  X_{\tau_{n+1}} \left( \tau_{n} ,  Y_{n}  \right) -  Y_{n} \right)^{\alpha} \diagup \mathfrak{F}_{\tau_n} \right)
\Big)
\right\vert ,
\end{equation}
which involves only the first two moments of 
$  Y_{n+1} -   Y_{n}$ and $ X_{\tau_{n+1}} \left( \tau_{n} ,  Y_{n}  \right) -  Y_{n} $.

In the spirit of the step-size selection strategies for ODEs based on embedded methods 
(see, e.g., \cite{Butcher2008,HairerNorsettWanner1993}),
we consider an additional local approximation 
$  \hat{Y}_{n+1} $ of  $X_{\tau_{n+1}} \left( \tau_{n} ,  Y_{n}  \right)$.
Replacing $  X_{\tau_{n+1}} \left( \tau_{n} ,  Y_{n}  \right) -  Y_{n} $ by $\hat{Y}_{n+1}   -  Y_{n}$ 
in \eqref{eq:LossFLocal}  we get 
\begin{equation}
\label{eq:LossFLocalAp}
 \left\vert
 \sum_{ \left\vert \alpha \right\vert = 1, 2}  \frac{1}{ \alpha !} \partial_x^{ \alpha} u \left(  \tau_n ,  Y_{n}  \right)
 \left(
 \mathbb{E} \left(  \left(   Y_{n+1} -   Y_{n} \right)^{\alpha} \diagup \mathfrak{F}_{\tau_n} \right)
 -
 \mathbb{E} \left(  \left(  \hat{Y}_{n+1}   -   Y_{n} \right)^{\alpha} \diagup \mathfrak{F}_{\tau_n} \right)
\right)
\right\vert ,
\end{equation}
which is our  fundamental  local discrepancy function depending on the first two conditional moments of an embedded pair of weak approximations.
We have that \eqref{eq:LossFLocalAp} is an approximation of \eqref{eq:LossFLocal}.
To see this, 
consider  $\hat{\mu} > 0$ 
such that for any multi-index $\alpha $ with $\left\vert \alpha \right\vert = 1, 2$,
\begin{equation}
\label{eq:LocalWeakConvOrderA}
 \left\vert
 \mathbb{E} \left(  
 \left(   \hat{Y}_{n+1}  -  Y_{n} \right)^{\alpha}  \hspace{-1pt}  \diagup \mathfrak{F}_{\tau_n} \right)
-
 \mathbb{E} \left( \left( X_{\tau_{n+1} } \left( \tau_{n} ,   Y_{n}  \right)   -  Y_{n} \right) ^{\alpha} \hspace{-1pt}  \diagup  \mathfrak{F}_{\tau_n} \right)
\right\vert
 \leq
 K \left(  Y_n \right) \left(  \tau_{n+1} - \tau_{n} \right)^{ \hat{\mu} }
\end{equation}
for all  $\mathfrak{F}_{\tau_n}$-measurable positive random variable $  \tau_{n+1} - \tau_{n}$.
Then, 
the absolute value of the difference between \eqref{eq:LossFLocal} and \eqref{eq:LossFLocalAp}
is  $O \left(  \left(   \tau_{n+1} - \tau_{n} \right) ^{ \hat{\mu} } \right) $
when $ \tau_{n+1} - \tau_{n} \rightarrow 0 +$.
Moreover, 
consider the largest $\mu > 0$ satisfying
\begin{equation}
\label{eq:LocalWeakConvOrderS}
 \left\vert
 \mathbb{E} \left(  
 \left(   Y_{n+1}  -  Y_{n} \right)^{\alpha} \diagup \mathfrak{F}_{\tau_n} \right)
-
 \mathbb{E} \left( \left( X_{\tau_{n+1} } \left( \tau_{n} ,   Y_{n}  \right)   -  Y_{n} \right) ^{\alpha} \diagup \mathfrak{F}_{\tau_n} \right)
\right\vert
 \leq
 K \left(  Y_n \right) \left(  \tau_{n+1} - \tau_{n} \right)^{ \mu }
\end{equation}
for all  $\mathfrak{F}_{\tau_n}$-measurable positive random variable  $  \tau_{n+1} - \tau_{n} \in \left] 0, 1 \right[$
and $\left\vert \alpha \right\vert = 1, 2$.
Hence,  \eqref{eq:LossFLocal}  is equal to $O \left(  \left(   \tau_{n+1} - \tau_{n} \right) ^{ \mu} \right) $
as $ \tau_{n+1} - \tau_{n} \rightarrow 0 +$.
Suppose that $\mu <  \hat{\mu}$,
i.e., the rate of convergence of the  first two conditional moments of $ Y_{n+1}  -  Y_{n}$ 
is smaller than the one of  $\hat{Y}_{n+1}  -  Y_{n} $.
Then, 
\[ 
\frac{\text{expression  \eqref{eq:LossFLocal}  }}{  \left(  \tau_{n+1} - \tau_{n} \right)^{ \mu } }
-
\frac{\text{expression  \eqref{eq:LossFLocalAp}  }}{  \left(  \tau_{n+1} - \tau_{n} \right)^{ \mu } }
\longrightarrow
0 
\qquad \text{ as }  \quad \tau_{n+1} - \tau_{n} \rightarrow 0 +
. 
\]
If the limit as $  \tau_{n+1} - \tau_{n} \rightarrow 0+$  of \eqref{eq:LossFLocalAp} (or  \eqref{eq:LossFLocal}) divided by $ \left(  \tau_{n+1} - \tau_{n} \right)^{ \mu }$ is greater than $0$ (see Lemma \ref{le:Momentos} below for an example),
then
\eqref{eq:LossFLocal} and \eqref{eq:LossFLocalAp} are asymptotically equivalent
as $ \tau_{n+1} - \tau_{n} \rightarrow 0 +$.

We propose to construct computable local  discrepancy functions of the form \eqref{eq:DefLossFunction}
such that the  fundamental  local discrepancy function \eqref{eq:LossFLocalAp} is small enough  
whenever the condition \eqref{eq:GeneralAdaptive} holds.
Then,
we design adaptive strategies 
based on selecting the step-size $ \Delta_* = \tau_{n+1} - \tau_{n}  $ as large as possible that satisfies \eqref{eq:GeneralAdaptive}.
A key point here is that for any $\left\vert \alpha \right\vert \leq 2$  the terms 
$
 \mathbb{E} \left(  \left(   Y_{n+1} -   Y_{n} \right)^{\alpha} \diagup \mathfrak{F}_{\tau_n} \right)
 -
 \mathbb{E} \left(  \left(  \hat{Y}_{n+1}   -   Y_{n} \right)^{\alpha} \diagup \mathfrak{F}_{\tau_n} \right) 
$
can be computed exactly or adequately approximated
without sampling the random variables $ Y_{n+1}$ and $  \hat{Y}_{n+1} $, an issue that 
will be addressed in Section \ref{sec:VariableStepSizeEM}.
However, 
the direct evaluation of \eqref{eq:LossFLocalAp} 
involves the computation of  $ \partial_x^{ \alpha} u \left(  \tau_n ,  Y_{n}  \right)$,
which arises from  the backward Kolmogorov equation \eqref{eq:Kolmogorov}.

We can deal with the term $ \partial_x^{ \alpha} u \left(  \tau_n ,  Y_{n}  \right)$
by using upper-bound estimates.
For example,
according to 
$
u \in  \mathcal{C}_{P}^{ 4 } \left( \left[ 0, T \right] \times \mathbb{R}^d,\mathbb{R}\right)
$
we have that  \eqref{eq:LossFLocalAp} is bounded from above by  
\[
K 
\sum_{\left\vert \alpha \right\vert = 1, 2}
  \frac{1}{ \alpha !} \left( 1+\left\Vert  Y_n \right\Vert ^{q_{\alpha}}\right) 
\,  \left\vert \,
 \mathbb{E} \left(  \left(   Y_{n+1} -   Y_{n} \right)^{\alpha} \diagup \mathfrak{F}_{\tau_n} \right)
 -
 \mathbb{E} \left(  \left(  \hat{Y}_{n+1} -   Y_{n} \right)^{\alpha} \diagup \mathfrak{F}_{\tau_n} \right)
\right\vert  ,
\]
where $q_{\alpha} \in \mathbb{Z}_+$ does not depend on $  \left( \tau_k \right)_k$.
This leads to requiring that 
\begin{equation}
\label{eq:Calpha}
\left\vert
 \mathbb{E} \left(  \left(   Y_{n+1} -   Y_{n} \right)^{\alpha} \diagup \mathfrak{F}_{\tau_n} \right)
 -
 \mathbb{E} \left(  \left(  \hat{Y}_{n+1}  -   Y_{n} \right)^{\alpha} \diagup \mathfrak{F}_{\tau_n} \right)
\right\vert 
\end{equation}
does not exceed a suitable threshold for any $\left\vert \alpha \right\vert \leq 2$.
Hence, we obtain the unweighted local discrepancy function
\begin{equation}
\label{eq:LossFunction1}
\emph{L}_n \left( \Delta \right) 
:=
 \left\Vert \left( 
\left\Vert \left(   \frac{ e_{i , n} \left(  \Delta  \right) }{ d_{i , n}  \left( \Delta \right)}   \right)_{i}  \right\Vert_{\mathbb{R}^d} ,
\left\Vert \left(  \frac{ e_{i , j, n} \left(  \Delta  \right) }{2 \, d_{i , j, n}   \left( \Delta \right)}  \right)_{i , j}  \right\Vert_{\mathbb{R}^{d \times d}}
 \right)
 \right\Vert_{\mathbb{R}^2} ,
\end{equation}
where 
\begin{equation}
\label{eq:e_i}
e_{i , n} \left(  \Delta  \right)
=
\mathbb{E} \left(  \hat{Y}^i_{n}\left(  \tau_n + \Delta \right)   \diagup \mathfrak{F}_{\tau_n} \right)
 -
 \mathbb{E} \left( Y^i_{n} \left(  \tau_n + \Delta \right)   \diagup \mathfrak{F}_{\tau_n} \right) ,
\end{equation}
\begin{equation}
\label{eq:e_ij}
e_{i , j, n} \left(  \Delta  \right)
 =
\mathbb{E} \left( \prod_{k=i,j}  \hspace{-3pt} \left(   \hat{Y}^k_{n}\left( \tau_{n} + \Delta \right)  -   Y^k_{n} \right)  \hspace{-2pt} \diagup \mathfrak{F}_{\tau_n}   \hspace{-2pt} \right)
 -
  \mathbb{E} \left(   \prod_{k=i,j} \hspace{-3pt} \left(   Y^k_{n} \left( \tau_{n} + \Delta \right)  -   Y^k_{n} \right)  \hspace{-2pt} \diagup \mathfrak{F}_{\tau_n}  \hspace{-2pt} \right)  \hspace{-3pt} ,
\end{equation}
and the thresholds $ d_{i , n}  \left( \Delta \right)$, $d_{i , j, n}  \left( \Delta \right)$ are positive $ \mathfrak{F}_{\tau_n} $-measurable random variables like \eqref{eq:3.5} (see Remark \ref{re:Threshold}).
As in Section \ref{sec:GeneralStrategy},
we use the notation
$ Y^i_{n} \left( \tau_n + \Delta \right)  $ (resp. $ \hat{Y}^i_{n}\left( \tau_n + \Delta \right)  $) 
to make explicit the dependence of $ Y^i_{n+1} $ (resp. $ \hat{Y}^i_{n+1}  $)
on the step-size $\Delta$.

An alternative way to treat  the term $ \partial_x^{ \alpha} u \left(  \tau_n ,  Y_{n}  \right)$,
which we will not develop in this paper,  
is to replace $\partial_x^{ \alpha} u \left(  \tau_n ,  Y_{n}  \right)$ 
by an approximation. 
For example,
since $u \left( T, \cdot \right) = \varphi $,
in \eqref{eq:LossFLocalAp} we substitute  $\partial_x^{ \alpha} u \left(  \tau_n ,  Y_{n}  \right)$ by its rough estimate 
$  \partial_x^{ \alpha} u \left(  T ,  Y_{n}  \right) =    \partial_x^{ \alpha}  \varphi  \left(  Y_{n}  \right)$.
Thus, \eqref{eq:LossFLocalAp} becomes 
\begin{equation*}
  \left\vert
 \sum_{ \left\vert \alpha \right\vert = 1, 2}  \frac{ \partial_x^{ \alpha} \varphi  \left(  Y_{n}  \right)}{ \alpha !} 
 \left(
 \mathbb{E} \left(  \left(  \hat{Y}_{n+1}   -   Y_{n} \right)^{\alpha} \diagup \mathfrak{F}_{\tau_n} \right)
  -
 \mathbb{E} \left(  \left(   Y_{n+1} -   Y_{n} \right)^{\alpha} \diagup \mathfrak{F}_{\tau_n} \right)
\right)
\right\vert 
\end{equation*}
This leads to the local discrepancy function
\begin{equation}
 \label{eq:3.6m}
 \sum_{ \left\vert \alpha \right\vert = 1, 2} 
 \frac{ 1}{ \alpha !} 
 \max \left\{ \left\vert \partial_x^{ \alpha} \varphi  \left(  Y_{n}  \right)  \right\vert, u_{fac} \right\} 
  \left\vert
 \mathbb{E} \left(  \left(  \hat{Y}_{n+1}   -   Y_{n} \right)^{\alpha} \hspace{-3pt}  \diagup \mathfrak{F}_{\tau_n} \right)
  -
 \mathbb{E} \left(  \left(   Y_{n+1} -   Y_{n} \right)^{\alpha} \hspace{-3pt}  \diagup \mathfrak{F}_{\tau_n} \right)
\right\vert ,
\end{equation}
where $u_{fac} \geq 0$ is a safety lower bound. 
Similar to \eqref{eq:LossFunction1},
looking for the terms of \eqref{eq:3.6m} to be in  a range of predetermined thresholds 
we obtain the weighted local discrepancy function 
\begin{equation*}
\emph{L}_n \left( \Delta \right) 
:=
 \left\Vert \left( 
\left\Vert \left( \wp_{i , n}  \frac{   e_{i , n} \left(  \Delta  \right)  }{ d_{i , n}  \left( \Delta \right)}  \right)_{i}  \right\Vert_{\mathbb{R}^d} ,
\left\Vert \left( \wp_{i , j, n}  \frac{   e_{i , j, n} \left(  \Delta  \right) }{ d_{i , j, n}  \left( \Delta \right) }  \right)_{i , j}  \right\Vert_{\mathbb{R}^{d \times d}}
 \right)
 \right\Vert_{\mathbb{R}^2} ,
\end{equation*}
where 
$ \wp_{i , n}  = \max \left\{ \left\vert \frac{\partial \varphi}{\partial x^i}    \left(  Y_{n}  \right) \right\vert , u_{fac} \right\} $,
$ \wp_{i , j, n}  = \frac{1}{2} \max \left\{  \left\vert  \frac{\partial^2 \varphi }{\partial x^i \partial x^j}  \left(  Y_{n}  \right) \right\vert , u_{fac} \right\} $,
and
$d_{i , n} $, $d_{i , j, n} $, $e_{i , n}$, $ e_{i , j, n}$ are as in \eqref{eq:LossFunction1}.

\begin{remark}
\label{re:Threshold}
We keep the absolute and relative error between  the conditional means 
$
\mathbb{E} \left(  \hat{Y}^i_{n}\left(  \tau_n + \Delta \right)   \diagup \mathfrak{F}_{\tau_n} \right)
$
and
$
 \mathbb{E} \left( Y^i_{n} \left(  \tau_n + \Delta \right)   \diagup \mathfrak{F}_{\tau_n} \right) 
$
less than the tolerances $Atol_{i}$ and $Rtol_{i}$ given by the user.
That is, we wish that
$ e_{i , n} \left(  \Delta  \right) \lesssim  Atol_{i} $
and 
$
e_{i , n} \left(  \Delta  \right) / 
\left(
\text{size of  } \mathbb{E} \left( Y^i_{n} \left(  \tau_n + \Delta \right)   \diagup \mathfrak{F}_{\tau_n} \right) 
\right)
\lesssim  Rtol_{i} 
$.
Similar to ODEs (see, e.g., \cite{Shampine2003,SoderlindWang2006}),
combining the absolute and relative tolerances  yields, for example, the threshold
$
d_{i , n} =  Atol_{i}+ Rtol_{i} \, \left\vert  Y_{n}^{i} \right\vert 
$
if
$
 \left\vert \mathbb{E} \left(    Y^i_{n} \left( \tau_n + \Delta \right)  \diagup \mathfrak{F}_{\tau_n} \right)  \right\vert 
$
is estimated by
$ \left\vert  Y_{n}^{i} \right\vert $.
Alternatively,
we can also take $d_{i , n} = \max \left\{ Atol_{i} , Rtol_{i} \, \left\vert  Y_{n}^{i} \right\vert \right\}$.
Moreover, we ask for 
 $ e_{i , j, n} \left(  \Delta  \right) \lesssim  Atol_{i,j} $
 and 
$
e_{i , j, n} \left(  \Delta  \right) /
\left(
\text{size of  } Y_{n}^{i} Y_{n}^{j}
\right)
\lesssim  Rtol_{i,j} 
$,
which leads us to  threshold
$
d_{i , j, n} 
=  
\left( \widetilde{Atol}_{i}+  \widetilde{ Rtol}_{i}  \, \left\vert  Y_{n}^{i} \right\vert  \right) \left( \widetilde{ Atol }_{j}+ \widetilde{ Rtol }_{j} \, \left\vert  Y_{n}^{j} \right\vert  \right) 
$,
where  $d_{i , j, n}$ has been divided into the coordinate  thresholds
$\widetilde{Atol}_{k}+  \widetilde{ Rtol}_{k}  \, \left\vert  Y_{n}^{k} \right\vert$
for computational efficiency.
In case the values $ Atol_{i,j} $ and $ Rtol_{i,j} $ be requested to be of the same order of magnitude as $Atol_{i}$ and $Rtol_{i}$,
we  set $\widetilde{Atol}_{i} = \sqrt{Atol_{i}}$ and $ \widetilde{ Rtol}_{i}  =  \sqrt{Rtol_{i}}$.
 This gives  \eqref{eq:3.5}.
 As in ODEs (see, e.g., \cite{BrenanCampbellPetzold1996,Shampine2003}),
 $Atol_{i}$ and $Rtol_{i} $ can provide information about  the scales involving in \eqref{eq:SDE},
in addition to accuracy criteria.
A variant of \eqref{eq:3.5} is, e.g., 
$
d_{i , n} \left( \Delta \right) 
  = 
 Atol_{i} + Rtol_{i}  \left\vert \mathbb{E} \left(    Y^i_{n} \left( \tau_n + \Delta \right)  \diagup \mathfrak{F}_{\tau_n} \right)  \right\vert $
and
$
d_{i , j, n} \left( \Delta \right)
  = 
\prod_{k=i,j} 
\left( \sqrt{Atol_{k}} + \sqrt{Rtol_{k} } \left\vert \mathbb{E} \left(    Y^k_{n} \left(  \tau_n + \Delta \right)  \diagup \mathfrak{F}_{\tau_n} \right)  \right\vert \right)
$.
 \end{remark}

Finally, 
in the local discrepancy function \eqref{eq:LossFunction1} any term 
\begin{equation}
 \label{eq:3.10}
  \mathbb{E} \left(  \left(  \hat{Y}_{n+1}   -   Y_{n} \right)^{\alpha} \diagup \mathfrak{F}_{\tau_n} \right) 
 -
  \mathbb{E} \left(  \left(   Y_{n+1}  -   Y_{n} \right)^{\alpha} \diagup \mathfrak{F}_{\tau_n} \right)
\end{equation}
can be replaced by an approximation, 
or a proper upper bound,
which is necessary when exact expressions for this conditional moment difference are not  known.
For instance,
pursuing computational efficiency, 
we can approximate \eqref{eq:3.10} by the leading-order term of the expansion of \eqref{eq:3.10} in powers of $ \tau_{n+1} - \tau_n$. 
This yields the local discrepancy function \eqref{eq:LossFunction1}
but with 
$e_{i , n} \left(  \Delta  \right)$ and $e_{i , j, n} \left(  \Delta  \right)$ 
being estimators of \eqref{eq:3.11} and \eqref{eq:3.12}, respectively.
Thus, 
for each different approximation 
we obtain a particular example of the function \eqref{eq:DefLossFunction}
that measures the discrepancy between $ Y_{n+1}$ and $\hat{Y}_{n+1} $.

\begin{remark}
Since the local discrepancy function \eqref{eq:LossFunction1}
does not depend on the solution of the backward Kolmogorov equation \eqref{eq:Kolmogorov},
we extend the use of  \eqref{eq:LossFunction1}  to design adaptive strategies 
in cases Hypothesis \ref{hyp:EDP} is not fulfilled.
A motivation comes from the numerical experiments of Subsection \ref{sub:Stochastic_Landau} 
that illustrate the good performance of variable step size schemes based on \eqref{eq:LossFunction1} 
in the numerical solution of a nonhypoelliptic SDEs whose drift coefficients have cubic growth.
\end{remark}

\begin{remark}
We split the fundamental  local discrepancy function \eqref{eq:LossFLocalAp}
into  
$e_{i , n} $, $e_{i , j, n} $ --defined by \eqref{eq:e_i} and \eqref{eq:e_ij}--
and 
the information provided by  the solution of the Kolmogorov equation  \eqref{eq:Kolmogorov}.
Since we can estimate  $e_{i , n} $ and $e_{i , j, n} $
with good precision and relatively low computation cost,
in this paper we introduce adaptive strategies focused  essentially on information given by $e_{i , n} $ and $e_{i , j, n} $.
In contrast,
the expansions of the weak error leading to the time-stepping strategies of \cite{Szepessy2001}
feature approximations of $ \partial_x^{ \alpha} u \left(  \tau_n ,  Y_{n}  \right) $ by using dual functions computed a-posteriori.
Moreover,
they do not include explicitly the moments 
$
 \mathbb{E} \left(  \left(   Y_{n+1} -   Y_{n} \right)^{\alpha} \diagup \mathfrak{F}_{\tau_n} \right)
$
and
$
\mathbb{E} \left(  \left(  X_{\tau_{n+1}} \left( \tau_{n} ,  Y_{n}  \right) -  Y_{n} \right)^{\alpha} \diagup \mathfrak{F}_{\tau_n} \right)
$,
and the $\tau_{n}$'s are not stopping times in the stochastic time stepping algorithm given by  \cite{Szepessy2001}.
\end{remark}

\section{Automatic step-size selection based on comparing first and second order weak approximations}
\label{sec:VariableStepSizeEM}

Using the methodology introduced in Section \ref{sec:GeneralStrategyP},
in Sections  \ref{sec:adaptiveEuler-Maruyama} and \ref{subsec:Extrapolation}  we design 
two variable step-size weak schemes for SDEs
with weak convergence orders $1$ and $2$, respectively.
Section \ref{sec:Starting} provides  a  new mechanism for adjusting the initial step-size.

\subsection{Discrepancy measure between the Euler scheme and a second order weak approximation}

\label{sec:BasicLossFunction}

We address the automatic selection of the step-sizes $\tau_{n+1} - \tau_{n}$
of the Euler scheme \eqref{eq:Euler-Maruyama}
by applying the general approach given in Section \ref{sec:GeneralStrategyP}.
To this end,
as the additional approximation we choose 
\begin{equation}
\label{eq:SecondOrderSchemeN}
\begin{aligned}
\hat{Y}_{n} \left( \tau_n + \Delta \right)
 & =
Y_{n} 
+  b \left( \tau_n ,  Y_{n} \right) \Delta  
+ \frac{1}{2} \mathcal{L}_0   b \, \left( \tau_n ,  Y_{n} \right) \Delta^2 
+ \sum_{k=1}^{m} \sigma_k \left(  \tau_n ,  Y_{n}  \right)  \sqrt{ \Delta  } \, \xi^{ k }_{n+1} 
\\
& \quad
+ \sum_{k, \ell =1}^{m} \mathcal{L}_k  \sigma_{\ell} \left(  \tau_n ,  Y_{n}  \right)   \Delta \,  \xi^{ k , \ell}_{n+1} 
 + \frac{1}{2}  \sum_{k=1}^{m} \left( 
 \mathcal{L}_k b \left(  \tau_n ,  Y_{n}  \right)  +  \mathcal{L}_0  \sigma_k \left(  \tau_n ,  Y_{n}  \right)   \right)
 \Delta ^{3/2}  \xi^{ k }_{n+1}  ,
\end{aligned}
\end{equation}
where $\Delta$ is a $\mathfrak{F}_{\tau_n}$-measurable positive random variable,
\begin{equation}
\label{eq:Def_L0_LK}
 \mathcal{L}_0
=
\frac{\partial }{\partial t}
+
\sum_{j = 1}^{d} b^j \frac{\partial}{\partial x^j}  
+
\frac{1}{2} \sum_{i, j =1}^{d}  \left( \sum_{k =1}^{m} 
\sigma_k^i  \sigma_k^j \right) \frac{\partial^2}{\partial x^i \partial x^j} ,
\qquad
\mathcal{L}_k
=
\sum_{j=1}^{d} \sigma_k^j  \frac{\partial }{ \partial x^j } ,
\end{equation}
and
$\xi^{ k}_{n+1}$, $\zeta^{ k }_{n+1}$
are  independent random variables such that 
$\xi^{ k }_{n+1} $ is normally distributed with mean $0$ and variance $1$,
and
\[
\xi^{ k , \ell}_{n+1} 
= 
\frac{1}{2}
\begin{cases}
\xi^{ k }_{n+1}  \xi^{\ell}_{n+1} +  \zeta^{ k }_{n+1} \zeta^{\ell}_{n+1} 
& 
\text{if } k < \ell
\\
\left( \xi^{ k }_{n+1} \right)^2 - 1  
& 
\text{if } k = \ell
\\
\xi^{ k }_{n+1}  \xi^{\ell}_{n+1} -  \zeta^{ k }_{n+1} \zeta^{\ell}_{n+1} 
& 
\text{if } k > \ell
\end{cases} 
\]
with $ \mathbb{P} \left( \zeta^{ k }_{n+1}  = \pm 1 \right) = 1/2 $ (see, e.g., \cite{Milstein2004}).
We consider the local discrepancy function \eqref{eq:DefLossFunction},
where 
$e_{i , n} \left(  \Delta  \right)$ and $e_{i , j, n} \left(  \Delta  \right)$
are estimators of \eqref{eq:3.11} and \eqref{eq:3.12} with  
the embedded pair  $Y_{n} \left( \tau_n + \Delta \right)$ and $\hat{Y}_{n} \left( \tau_n + \Delta \right)$
given by the Euler aproximation \eqref{eq:Euler-MaruyamaDelta} and
the second order  weak It\^o-Taylor  approximation  \eqref{eq:SecondOrderSchemeN}.

The rate of convergence of the  first two conditional moments of $ \hat{Y}_{n} \left( \tau_n + \Delta \right)  -  Y_{n} $
is greater than the one of $ Y_{n} \left( \tau_n + \Delta \right)  -  Y_{n}$.
In fact,  the one-step approximations $Y_{n} \left( \tau_n + \Delta \right)$ and $\hat{Y}_{n} \left( \tau_n + \Delta \right)$
to the solution  of \eqref{eq:2.1} satisfy  \eqref{eq:LocalWeakConvOrderA} and \eqref{eq:LocalWeakConvOrderS}
with $ \hat{\mu} = 3$ and $ \mu = 2$. 
Hence,
 the embedded approximation pair \eqref{eq:Euler-MaruyamaDelta} and \eqref{eq:SecondOrderSchemeN} brings about 
 the asymptotic equivalence of \eqref{eq:LossFLocal} with the fundamental local discrepancy function \eqref{eq:LossFLocalAp}.
 Therefore,
 we can expect that the largest  $\Delta_* > 0$ satisfying  \eqref{eq:GeneralAdaptive} 
 is  a good candidate for the step-size $\tau_{n+1} - \tau_{n} $.
 
Next,
we show that we can compute 
$
 \mathbb{E} \left(  \left(  \hat{Y}_{n} \left( \tau_n + \Delta \right)    -   Y_{n} \right)^{\alpha} \diagup \mathfrak{F}_{\tau_n} \right) 
-
\mathbb{E} \left(  \left(   Y_{n} \left( \tau_n + \Delta \right)  -   Y_{n} \right)^{\alpha} \diagup \mathfrak{F}_{\tau_n} \right)
$,
with $\left\vert \alpha \right\vert \leq 2 $,
by evaluating the partial derivatives, up to the second order,  of $b$ and $ \sigma_k$ at $ \left( \tau_n ,  Y_{n} \right)$.
Thus,
we do not need to sample the random variables $ Y_{n} \left( \tau_n + \Delta \right) $ and $  \hat{Y}_{n} \left( \tau_n + \Delta \right)  $.

\begin{lemma}
 \label{le:Momentos}
 Let  $Y_{n} \left( \tau_n + \Delta \right)  $ and  $\hat{Y}_{n} \left( \tau_n + \Delta \right)$ be described by 
 \eqref{eq:Euler-MaruyamaDelta} and \eqref{eq:SecondOrderSchemeN}.
 Then,
$
 \mathbb{E} \left(  \hat{Y}_{n} \left( \tau_n + \Delta \right)   \diagup \mathfrak{F}_{\tau_n} \right)
 -
\mathbb{E} \left(  Y_{n} \left( \tau_n + \Delta \right)   \diagup \mathfrak{F}_{\tau_n} \right)
=
\frac{1}{2} \mathcal{L}_0   b \, \left( \tau_n ,  Y_{n} \right)  \Delta^2
$,
and 
\begin{align*}
& 
 \mathbb{E} \left(  \prod_{k=i,j}  
 \left(   \hat{Y}^k_{n}\left( \tau_n + \Delta \right)  -   Y^k_{n} \right)  \diagup \mathfrak{F}_{\tau_n} \right)
 -
  \mathbb{E} \left( \prod_{k=i,j}   \left(   Y^k_{n} \left( \tau_n + \Delta \right)  -   Y^k_{n} \right) \diagup \mathfrak{F}_{\tau_n} \right)
\\
& =
T_2^{i, j} \left(   \tau_n ,  Y_{n} \right)    \Delta^2
+
T_3^{i, j} \left(   \tau_n ,  Y_{n} \right)    \Delta^3
+
T_4^{i, j}\left(  \tau_n ,  Y_{n}  \right)    \Delta^4,
\end{align*}
where  
$i, j = 1,\ldots, d$, 
$
T_4^{i, j}  
 =
  \frac{1}{4}   \mathcal{L}_0 b^{ i} \cdot \mathcal{L}_0 b^{ j} 
$,
\[
 T_2^{i, j}  
 =
 \frac{1}{2}  \sum_{k=1}^m  \sigma_k^{i} \left( \mathcal{L}_0   \sigma_k^{ j }  + \mathcal{L}_k   b^{ j }\right)
 +
 \frac{1}{2}  \sum_{ k =1}^m  \sigma_k^{j} \left( \mathcal{L}_0   \sigma_k ^{i}  + \mathcal{L}_k   b^{i }\right)
 +
 \frac{1}{2}  \sum_{k, \ell =1}^m   \mathcal{L}_k \sigma_{\ell }^{ i }  \cdot \mathcal{L}_k  \sigma_{\ell }^{ j } 
,
\]
\[
T_3^{i, j}
=
 \frac{1}{2}   b^{ i }  \mathcal{L}_0 b^{ j }
 +
 \frac{1}{2}   b^{ j }  \mathcal{L}_0 b^{ i}
 +
 \frac{1}{4}  \sum_{k=1}^m \mathcal{L}_0  \sigma_k^{ i }  \left( \mathcal{L}_k  b^{  j } +  \mathcal{L}_0  \sigma_k^{ j } \right)
 +
 \frac{1}{4}  \sum_{k=1}^m  \mathcal{L}_k  b^{  i } \left(  \mathcal{L}_k  b^{  j } + \mathcal{L}_0  \sigma_k^{ j } \right)
.
\]
\end{lemma}

\begin{proof}
 Deferred to Section \ref{subsec:Proofth:le:Momentos}.
\end{proof}

We recall that $e_{i , n} \left(  \Delta  \right)$ and $e_{i , j, n} \left(  \Delta  \right)$ are estimators of 
\[
 \mathbb{E} \left(    \hat{Y}^i_{n}\left( \tau_n + \Delta \right)  -   Y^i_{n}    \diagup \mathfrak{F}_{\tau_n} \right)
 -
  \mathbb{E} \left(    Y^i_{n} \left( \tau_n + \Delta \right)  -   Y^i_{n}   \diagup \mathfrak{F}_{\tau_n} \right)
\]
and 
\[
 \mathbb{E} \left( \prod_{k=i,j}  
 \left(   \hat{Y}^k_{n}\left( \tau_n + \Delta \right)  -   Y^k_{n} \right)  \diagup \mathfrak{F}_{\tau_n} \right)
 -
  \mathbb{E} \left(   \prod_{k=i,j} 
  \left(   Y^k_{n} \left( \tau_n + \Delta \right)  -   Y^k_{n} \right) \diagup \mathfrak{F}_{\tau_n} \right)
,
\]
respectively.
Lemma \ref{le:Momentos} gives
$
e_{i , n} \left(  \Delta  \right)
= 
\frac{1}{2} \mathcal{L}_0   b^i  \left( \tau_n ,  Y_{n} \right)   \Delta ^2 
$
for all $ i = 1, \ldots, d $.
Hence,
\[
\left\Vert \left(  \frac{ \mathfrak{p}_{i , n} }{d_{i , n}  \left( \Delta \right) }  e_{i , n} \left(  \Delta  \right)  \right)_{i}  \right\Vert_{\mathbb{R}^d}
=
\Delta^2 
 \left\Vert \left(  \frac{\mathfrak{p}_{i , n}}{2 d_{i , n}  \left( \Delta \right)} \mathcal{L}_0   b^i \left( \tau_n ,  Y_{n} \right)    \right)_{ i } \right\Vert_{\mathbb{R}^d} .
\]
By  Lemma \ref{le:Momentos},
pursuing computational efficiency we take
$
e_{i,j , n} \left(  \Delta  \right) = T_2^{i, j} \left(   \tau_n ,  Y_{n} \right)    \Delta^2 
$,
and so
\[
 \left\Vert \left(  \frac{\mathfrak{p}_{i , j , n} }{d_{i , j, n}  \left( \Delta \right) }  e_{i , j,  n} \left(  \Delta  \right) \right)_{i , j}  \right\Vert_{\mathbb{R}^{d \times d}}
 =
 \Delta^2 
 \left\Vert \left(  \frac{\mathfrak{p}_{i , j , n} }{d_{i , j, n}  \left( \Delta \right) }  T_2^{i, j} \left(   \tau_n ,  Y_{n} \right) \right)_{i , j}  \right\Vert_{\mathbb{R}^{d \times d}} 
,
\]
where  $\mathfrak{p}_{ \cdot}$ and $ d_{\cdot} \left( \Delta \right) $ denote the weights and thresholds, respectively.
This gives the local discrepancy function $\emph{L}_n \left( \Delta \right)$ defined by
\begin{equation}
 \label{eq:MainLocalDF}
 \Delta^2 
\left\Vert \left( 
 \left\Vert \left(  \frac{\mathfrak{p}_{i , n}}{2 d_{i , n}  \left( \Delta \right)} \mathcal{L}_0   b^i \left( \tau_n ,  Y_{n} \right)    \right)_{ i } \right\Vert_{\mathbb{R}^d} 
  , 
 \left\Vert \left(  \frac{\mathfrak{p}_{i , j , n} }{d_{i , j, n}  \left( \Delta \right) }  T_2^{i, j} \left(   \tau_n ,  Y_{n} \right) \right)_{i , j}  \right\Vert_{\mathbb{R}^{d \times d}}
\right)
\right\Vert_{\mathbb{R}^2}
\end{equation}
We recall that we wish to determine
a large enough $\mathfrak{F}_{\tau_n}$-measurable positive random variable $\Delta_*$
with the property $ \emph{L}_n \left( \Delta_* \right)   \leq  1 $.

\subsection{Automatic selection of the starting step-size}
\label{sec:Starting}

The choice \eqref{eq:3.3} provides  information to  $\tau_{n+1} - \tau_{n}$  on the previous step-size  $\tau_{n} - \tau_{n-1}$ whenever $n \geq 1$.
Indeed, 
the positive constant $\mathfrak{fac}_{max}$ prevents a bad selection of $\tau_{n+1} - \tau_{n}$,  with $n \geq 1$,  by avoiding sudden increases of the new step-sizes.
Since this precaution can not be taken in the computation of  $\tau_{1}$,
we next introduce a new module for the calculation of the starting  step-size  $\tau_{1} - \tau_{0}$.
Alternatively, the user has to specify an initial step-size,
which, for instance, could be a hard task for casual users.

Inspired by selection algorithms of the initial step-size for ODE solvers 
(see, e.g., \cite{GladwellShampineBrankin1987,HairerNorsettWanner1993}),
we limit the increment $Y_1 - Y_0$ to be within a given tolerance.
This leads to the step-size $\Delta_{s}$ given in Definition \ref{def:Delta_initial} below,
which we get by applying Lemma \ref{le:incrementos} below with 
$
d_{i,0} \left( \Delta \right) = Atol^s_{i} + Rtol^s_{i} \, \left\vert  Y_{0}^{i} \right\vert 
$
and
$
\widetilde{d}_{i,0} \left( \Delta \right)  = \sqrt{ Atol^s_{i} }+ \sqrt{ Rtol^s_{i} } \, \left\vert  Y_{0}^{i} \right\vert 
$,
where $Atol^s_{i}, Rtol^s_{i}  \in \left[ 0 , 1 \right]$ are the absolute and relative tolerance parameters
for the starting step-size.

\begin{lemma}
\label{le:incrementos}
Let $Y_{n} \left( \tau_n + \Delta \right)$ be defined by \eqref{eq:Euler-MaruyamaDelta}.
Let $d_{i,n} \left( \Delta \right) $ and $\widetilde{ d }_{i,n} \left( \Delta \right) $
be positive $ \mathfrak{F}_{\tau_n} $-measurable random variables  such that
$
\widetilde{ d }_{i,n} \left( \Delta \right)  \geq d_{i,n} \left( \Delta \right) > 0
$
for all $i =1, \ldots, d $.
Then
\begin{align*}
&  \left\Vert
\left(
\frac{
\mathbb{E} \left(  \left(  Y_{n}^i \left( \tau_n + \Delta \right)  -   Y^i_{n} \right)  \left( Y^j_{n} \left( \tau_n + \Delta \right)  -   Y^{j}_{n} \right) \diagup \mathfrak{F}_{\tau_n} \right) }
{
2 \, \widetilde{d}_{i,n} \left( \Delta \right) \widetilde{d}_{j,n} \left( \Delta \right)
} \right)_{i,j}
\right\Vert_{\mathbb{R}^{d \times d}}
\\
& 
\leq
\frac{\Delta^2}{2}
\left\Vert
\left(
\frac{ b^i \left( \tau_n ,  Y_{n} \right)  }
{ d_{i,n} \left( \Delta \right) 
} \right)_{i}
\right\Vert_{ \mathbb{R}^d }^2
 +
\frac{\Delta}{2}  
\sum_{k=1}^{m}
 \left\Vert
\left(
\frac{ \sigma^i_k \left(  \tau_n ,  Y_{n}  \right)  }
{ \widetilde{d}_{i,n} \left( \Delta \right)  } \right)_{i}
\right\Vert_{ \mathbb{R}^d } ^2 ,
\end{align*}
where the pair $\left(   \left\Vert \cdot \right\Vert_{\mathbb{R}^{d}} ,  \left\Vert \cdot \right\Vert_{\mathbb{R}^{d \times d}} \right)$ 
satisfies \eqref{eq:4.4}.
\end{lemma}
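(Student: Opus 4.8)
The plan is to compute the conditional cross-moment of the Euler increment exactly, and then distribute the resulting drift and diffusion contributions across the two terms on the right-hand side using property \eqref{eq:4.4} together with the monotonicity of the absolute norm.

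First I would write out the $i$-th increment from \eqref{eq:Euler-MaruyamaDelta},
\[
Y_n^i(\tau_n + \Delta) - Y_n^i = b^i(\tau_n, Y_n)\,\Delta + \sum_{k=1}^m \sigma_k^i(\tau_n, Y_n)\bigl(W^k_{\tau_n+\Delta} - W^k_{\tau_n}\bigr),
\]
and expand the product $\bigl(Y_n^i(\tau_n+\Delta) - Y_n^i\bigr)\bigl(Y_n^j(\tau_n+\Delta) - Y_n^j\bigr)$. Since $\Delta$ is $\mathfrak{F}_{\tau_n}$-measurable, the same reasoning that yields \eqref{eq:4.9} (the strong Markov property, which makes $s \mapsto W^k_{\tau_n+s} - W^k_{\tau_n}$ a Brownian motion independent of $\mathfrak{F}_{\tau_n}$) gives $\mathbb{E}\bigl(W^k_{\tau_n+\Delta} - W^k_{\tau_n} \mid \mathfrak{F}_{\tau_n}\bigr) = 0$ and $\mathbb{E}\bigl((W^k_{\tau_n+\Delta}-W^k_{\tau_n})(W^l_{\tau_n+\Delta}-W^l_{\tau_n}) \mid \mathfrak{F}_{\tau_n}\bigr) = \delta_{kl}\,\Delta$. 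The drift-times-noise cross terms therefore drop out, leaving the exact identity
\[
\mathbb{E}\bigl((Y_n^i(\tau_n+\Delta)-Y_n^i)(Y_n^j(\tau_n+\Delta)-Y_n^j) \mid \mathfrak{F}_{\tau_n}\bigr) = b^i b^j\, \Delta^2 + \Delta\sum_{k=1}^m \sigma_k^i\sigma_k^j,
\]
where $b^i = b^i(\tau_n,Y_n)$ and $\sigma_k^i = \sigma_k^i(\tau_n,Y_n)$.

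Dividing by $2\,\widetilde{d}_{i,n}\widetilde{d}_{j,n}$ and applying the triangle inequality for $\left\Vert \cdot \right\Vert_{\mathbb{R}^{d\times d}}$, I would split the matrix on the left-hand side of the claim into a drift matrix $\tfrac{\Delta^2}{2}(b^ib^j/(\widetilde{d}_{i,n}\widetilde{d}_{j,n}))_{i,j}$ and a diffusion matrix $\tfrac{\Delta}{2}(\sum_k \sigma_k^i\sigma_k^j/(\widetilde{d}_{i,n}\widetilde{d}_{j,n}))_{i,j}$. For the drift matrix I recognize it as $\tfrac{\Delta^2}{2}\,xx^\top$ with $x = (\lvert b^i\rvert/\widetilde{d}_{i,n})_i$, so \eqref{eq:4.4} bounds its norm by $\tfrac{\Delta^2}{2}\left\Vert x\right\Vert_{\mathbb{R}^d}^2$; because $\widetilde{d}_{i,n} \geq d_{i,n}>0$ and $\left\Vert \cdot\right\Vert_{\mathbb{R}^d}$ is absolute, hence monotone, I may replace $\widetilde{d}_{i,n}$ by the smaller $d_{i,n}$ to reach the first term on the right-hand side. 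For the diffusion matrix I first use $\lvert\sum_k \sigma_k^i\sigma_k^j\rvert \leq \sum_k \lvert\sigma_k^i\rvert\lvert\sigma_k^j\rvert$ with monotonicity, then subadditivity of the norm in $k$, and finally \eqref{eq:4.4} applied to each rank-one matrix $(\lvert\sigma_k^i\rvert\lvert\sigma_k^j\rvert/(\widetilde{d}_{i,n}\widetilde{d}_{j,n}))_{i,j}$, obtaining $\tfrac{\Delta}{2}\sum_k \left\Vert (\sigma_k^i/\widetilde{d}_{i,n})_i\right\Vert_{\mathbb{R}^d}^2$, the second term.

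The calculation is otherwise routine; the only points requiring care are the justification of the conditional second-moment formula at the $\mathfrak{F}_{\tau_n}$-stopping time $\tau_n + \Delta$ (handled exactly as in \eqref{eq:4.9}) and the bookkeeping that retains the denominator $\widetilde{d}_{i,n}$ in the diffusion estimate while downgrading it to $d_{i,n}$ only in the drift estimate. This asymmetry, enforced by monotonicity of the absolute norm, is precisely what lets the statement pair $d_{i,n}$ with $b$ and $\widetilde{d}_{i,n}$ with $\sigma_k$, and is the step I would double-check most carefully.
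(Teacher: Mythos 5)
Your proposal is correct and follows essentially the same route as the paper's proof: establish the exact identity \eqref{eq:4.10} for the conditional second moment at the stopping time $\tau_n+\Delta$, split by the triangle inequality into the drift and diffusion rank-one parts, bound each via \eqref{eq:4.4}, and use $\widetilde{d}_{i,n}\geq d_{i,n}$ together with monotonicity of the absolute norm to downgrade the denominator only in the drift term. The sole cosmetic difference is that you justify the moment identity via the strong Markov restart of the Brownian motion at $\tau_n$, whereas the paper invokes the optional sampling theorem at the stopping time $\tau_n+\Delta$ --- equivalent standard arguments yielding the same formula.
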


\begin{proof}
 Deferred to Section \ref{subsec:Proofth:le:incrementos}.
\end{proof}

  \begin{definition}
 \label{def:Delta_initial}
 Let $\Delta_{max}$ denote the maximum  step-size.
 In case 
\[
\max  \left\{ 
 \left\Vert 
\left( \frac{ b^i \left( \tau_0 ,  Y_{0} \right) }{ Atol^s_{i}+ Rtol^s_{i} \, \left\vert  Y_{0}^{i} \right\vert } \right)_{i }
 \right\Vert_{ \mathbb{R}^d }  , 
 \sum_{k=1}^{m}
 \left\Vert
\left(
\frac{ \sigma^i_k \left(  \tau_0 ,  Y_{0}  \right)  }
{ \left( \sqrt{Atol^s_{i}}+ \sqrt{Rtol^s_{i}} \, \left\vert  Y_{0}^{i} \right\vert \right) } \right)_{i}
\right\Vert_{ \mathbb{R}^d } ^2 
 \right\}
\]
is greater than $ 1/ \Delta_{max}$,
we define $\Delta_{s}$ to be 
\[
1/ 
\max \left\{
 \left\Vert 
\left( \frac{ b^i \left( \tau_0 ,  Y_{0} \right) }{ Atol^s_{i}+ Rtol^s_{i} \, \left\vert  Y_{0}^{i} \right\vert } \right)_{i }
 \right\Vert_{ \mathbb{R}^d }  , 
 \sum_{k=1}^{m}
 \left\Vert
\left(
\frac{ \sigma^i_k \left(  \tau_0 ,  Y_{0}  \right)  }
{ \left( \sqrt{Atol^s_{i}}+ \sqrt{Rtol^s_{i}} \, \left\vert  Y_{0}^{i} \right\vert \right) } \right)_{i}
\right\Vert_{ \mathbb{R}^d } ^2 
 \right\}.
 \]
Otherwise, 
we take $\Delta_{s} = \Delta_{max}$.
\end{definition}

Now, 
we compare $\Delta_s$ with the step-size $\Delta_*$ proposed in Section \ref{sec:GeneralStrategy} to compute $Y_{1}$.
In particular, following Section \ref{sec:BasicLossFunction} we find a large enough $\Delta_* > 0$ such that $ \emph{L}_0 \left( \Delta_* \right)   \leq  1 $,
where $ \emph{L}_0 \left( \Delta \right)$ is defined by \eqref{eq:MainLocalDF}.
Then,
we set $\Delta_{0}=\min \left\{ \Delta_*,  \Delta_s \right\}$,
and so we take the starting step-size
\[
\tau_{1} 
=
\tau_{0} 
+
\max \left\{  \Delta_{min} , 
\min \left\{ \Delta_{max}, \Delta_*,  \Delta_s \right\}
\right\} 
.
\]

\subsection{A basic variable step-size Euler scheme}
\label{sec:adaptiveEuler-Maruyama}

In this subsection we consider the local discrepancy function \eqref{eq:MainLocalDF}
with  $ \mathfrak{p}_{i , n}  = 1 $ and $ \mathfrak{p}_{i , j, n}  = 1/2 $,
i.e., 
the weights $\mathfrak{p}_{i , n}$ and $ \mathfrak{p}_{i , j, n}$ are provided by \eqref{eq:LossFunction1}.
In order to reduce the computational complexity
we choose the thresholds $d_{i , j, n}  \left( \Delta \right)$ to be 
$\widetilde{d}_{i , n}  \left( \Delta \right)  \widetilde{d}_{j, n}  \left( \Delta \right) $.
Therefore,  \eqref{eq:MainLocalDF} becomes 
\begin{equation}
\label{eq:DF1}
 \emph{L}_n \left( \Delta \right) 
 =
\frac{ \Delta^2 }{2}
\left\Vert \left( 
 \left\Vert \left(  \frac{ \mathcal{L}_0   b^i \left( \tau_n ,  Y_{n} \right)  }{ d_{i , n}  \left(  \Delta \right)}   \right)_{ i } \right\Vert_{\mathbb{R}^d} 
  , 
\left\Vert 
\left(  \frac{ T_2^{i, j} \left(   \tau_n ,  Y_{n} \right) }
{ \widetilde{d}_{i , n}  \left( \Delta \right)  \widetilde{d}_{j, n}  \left( \Delta \right)  }   \right)_{i , j}  
\right\Vert_{\mathbb{R}^{d \times d}} 
\right)
\right\Vert_{\mathbb{R}^2} ,
\end{equation}
where 
the  matrix-norm $ \left\Vert \cdot \right\Vert_{\mathbb{R}^{d \times d}}$ 
satisfies \eqref{eq:4.4}, 
and
the vector norm $  \left\Vert  \cdot  \right\Vert_{\mathbb{R}^d}  $ 
may be different from that of Section \ref{sec:Starting}.
We recall that $\mathcal{L}_0$ and $\mathcal{L}_k$ are defined by \eqref{eq:Def_L0_LK}. 
Since
\begin{equation*}
\left( T_2^{i, j}  \right)_{i,j}
= 
\frac{1}{2}  \sum_{k=1}^m  \sigma_k \left( \mathcal{L}_0   \sigma_k  + \mathcal{L}_k   b \right)^{\top}
+
\frac{1}{2}  \sum_{k=1}^m  \left( \mathcal{L}_0   \sigma_k  + \mathcal{L}_k   b \right) \sigma_k^{\top}
+
 \frac{1}{2}  \sum_{k, \ell =1}^m   \mathcal{L}_k \sigma_{\ell }  \cdot \mathcal{L}_k  \sigma_{\ell }^{ \top } ,
\end{equation*}
combining the triangle inequality with \eqref{eq:4.4} we obtain
$
\left\Vert \left(  \frac{   T_2^{i, j} \left(   \tau_n ,  Y_{n} \right)}{ \widetilde{d}_{i , n}  \left( \Delta \right)  \widetilde{d}_{j, n}  \left( \Delta \right) }  \right)_{i , j}  \right\Vert_{\mathbb{R}^{d \times d}} 
\leq
\mathfrak{t}_{n}  \left(  \Delta  \right)  
$,
where
\begin{equation}
 \label{eq:4.13}
  \mathfrak{t}_{n}  \left(  \Delta  \right) 
  =
 \sum_{k=1}^m 
  \left\Vert    \left(  \frac{    \sigma_k^i  \left(   \tau_n ,  Y_{n} \right)  }{  \widetilde{d}_{i , n}   \left( \Delta \right) } \right)_{i }   \right\Vert_{\mathbb{R}^d} 
  \left\Vert   \left(  \frac{  \mathcal{L}_0   \sigma_k^i \left(   \tau_n ,  Y_{n} \right)   + \mathcal{L}_k   b^i \left(   \tau_n ,  Y_{n} \right) }
  {  \widetilde{d}_{i , n}   \left( \Delta \right) } \right)_{i }  \right\Vert_{\mathbb{R}^d} 
  +   \frac{1}{2}  \sum_{k, \ell =1}^m  
  \left\Vert \left(  \frac{   \mathcal{L}_k \sigma_{\ell }^i \left( \tau_n ,  Y_{n} \right)  }{  \widetilde{d}_{i , n}   \left( \Delta \right) } \right)_{i } \right\Vert_{\mathbb{R}^d}^2  .
\end{equation}
Therefore, for all $\Delta > 0$,
\begin{equation}
\label{eq:4.6}
 \emph{L}_n \left( \Delta \right) 
\leq 
\widetilde{\emph{L}}_n \left( \Delta \right)
:=
\frac{ \Delta^2 }{2}
\left\Vert \left( 
 \left\Vert \left(   
 \frac{ \mathcal{L}_0   b^i \left( \tau_n ,  Y_{n} \right)  }{ d_{i , n}  \left( \Delta \right)}   \right)_{ i } \right\Vert_{\mathbb{R}^d} 
  , 
\mathfrak{t}_{n}  \left(  \Delta  \right) 
\right)
\right\Vert_{\mathbb{R}^2} .
 \end{equation}

Looking for computational efficiency we consider 
the ancillary local discrepancy function $\widetilde{\emph{L}}_n \left( \Delta \right)$ given by \eqref{eq:4.6},
and we search for the largest  $\Delta_* > 0$ such that $ \widetilde{\emph{L}}_n \left( \Delta_* \right) \leq 1$.
Moreover,
a way to simplify the computation of  $ \Delta_{*}$  is to take
$\widetilde{d}_{i , n}   \left( \Delta \right)$  and $d_{i , n} \left( \Delta \right)$ 
independent of $\Delta$, for instance, as in \eqref{eq:3.5}. 
In this case, 
$\mathfrak{t}_{n}  \left(  \Delta  \right) $ does not depend on $\Delta$,
and 
the largest $\mathfrak{F}_{\tau_n}$-measurable positive random variable $\Delta_{*,n}$ 
satisfying $ \widetilde{\emph{L}}_n \left( \Delta_* \right) \leq 1$ is 
 $
 \Delta_{*,n}
 =
 \sqrt{ 
 2/  
 \left\Vert \left( 
 \left\Vert \left(  
 \frac{ \mathcal{L}_0   b^i \left( \tau_n ,  Y_{n} \right)  }{ d_{i , n} }   \right)_{ i } \right\Vert_{\mathbb{R}^d} 
  , 
\mathfrak{t}_{n}  
\right)
\right\Vert_{\mathbb{R}^2}
 }
 $
whenever 
$\mathfrak{t}_{n}   \neq 0$ or
$ \mathcal{L}_0   b^i \left( \tau_n ,  Y_{n} \right)   \neq 0$ for some $i \in \left\{ 1,\ldots, n \right\}$.
Here $\mathfrak{t}_{n}  \left(  \Delta  \right) \equiv \mathfrak{t}_{n}$  
and $\widetilde{d}_{i , n}   \left( \Delta \right)  \equiv \widetilde{d}_{i , n}$. 
Selecting $d_{i , n} \left( \Delta \right)$ and $\widetilde{d}_{i , n}   \left( \Delta \right)$ as in \eqref{eq:3.5} we obtain the following
suboptimal  selection of the step-size $\Delta_*$.

\begin{definition}[Step-size $\Delta_{*,n}$  corresponding to  the ancillary local discrepancy function \eqref{eq:4.6}]
\label{def:Delta_*}
Let 
\begin{align*}
 \mathfrak{t}_{n} 
& =
 \sum_{k=1}^m 
  \left\Vert    \left(  \frac{    \sigma_k^i  \left(   \tau_n ,  Y_{n} \right)  }
  {   \sqrt{ Atol_{i}}+ \sqrt{Rtol_{i}} \, \left\vert  Y_{n}^{i} \right\vert  } \right)_{i }   \right\Vert_{\ell^{p}}  
  \left\Vert   \left(  \frac{  \mathcal{L}_0   \sigma_k^i \left(   \tau_n ,  Y_{n} \right)   + \mathcal{L}_k   b^i  \left(   \tau_n ,  Y_{n} \right) }
  {   \sqrt{ Atol_{i}}+ \sqrt{Rtol_{i}} \, \left\vert  Y_{n}^{i} \right\vert  } \right)_{i }  \right\Vert_{\ell^{p}} 
 \\
 & \quad 
 +  \frac{1}{2}  \sum_{k, \ell =1}^m  
  \left\Vert \left(  \frac{   \mathcal{L}_k \sigma_{\ell }^i \left( \tau_n ,  Y_{n} \right)  }
  {   \sqrt{ Atol_{i}}+ \sqrt{Rtol_{i}} \, \left\vert  Y_{n}^{i} \right\vert  } \right)_{i } \right\Vert_{\ell^{p}}^2   ,
\end{align*}
where $Atol_{i}, Rtol_{i}  \in \left[ 0 , 1 \right]$,
and $\mathcal{L}_0$, $\mathcal{L}_k$ are given by \eqref{eq:Def_L0_LK}. 
If
$\mathfrak{t}_{n}   \neq 0$
or 
$\mathcal{L}_0   b^i \left( \tau_n ,  Y_{n} \right)  \neq 0$ for some $i \in \left\{ 1,\ldots, n \right\}$,
then we set
\begin{equation}
 \label{eq:4.8}
  \Delta_{*,n}
 =
 \sqrt{
 2/ 
 \left\Vert \left( 
 \left\Vert \left(  
 \frac{ \mathcal{L}_0   b^i \left( \tau_n ,  Y_{n} \right)  }{  Atol_{i}+ Rtol_{i} \, \left\vert  Y_{n}^{i} \right\vert  }   \right)_{ i } \right\Vert_{\mathbb{R}^d} 
  , 
\mathfrak{t}_{n}  
\right)
\right\Vert_{\mathbb{R}^2}
 }.
\end{equation}
Otherwise, $  \Delta_{*,n} = \Delta_{max}$,
where $\Delta_{max}$ stands for the maximum  step-size.
\end{definition}

In Adaptive scheme \ref{AdaptiveEM1} below
we provide a weak approximation to $X_t$ at the times $0 < T_1 < \cdots < T_{M_*} \leq T$.
To this end,
we adjust the step-size $\tau_{n+1} - \tau_{n}$ to compute $Y_{n+1}$ from $Y_n$ by means of Definition \ref{def:Delta_*}.

\begin{strategy}
\label{AdaptiveEM1}
Consider the real numbers $0 < T_1 < \cdots < T_{M_*} \leq T$, given by the user.
Then:
 
\begin{enumerate}

\item Simulate $Y_0$. Take $k=1$.

\item Compute 
$
\tau_{1} 
=
\min \left\{ T_1 , 
\tau_{0} 
+
\max \left\{  \Delta_{min} , 
\min \left\{ \Delta_{max}, \Delta_{*,0},  \Delta_{s}  \right\}
\right\}  \right\} 
$,
where
$\Delta_{s}$ and $\Delta_{*,0}$ are given by Definition \ref{def:Delta_initial} and Definition \ref{def:Delta_*} with $n=0$,
respectively.

\item Compute $Y_1$ according to \eqref{eq:Euler-Maruyama} with $\tau_{0} = 0$.
Set $n=1$.

\item 
If $\tau_{n}  < T_k$, then go to Step 5.
In case $\tau_{n}  = T_k  < T_{M_*}$, go to to Step 5 with $k+1$ as the new value of $k$.
Stop when  $\tau_{n}  = T_k  = T_{M_*}$.

\item Compute $\Delta_{*,n}$ as in Definition \ref{def:Delta_*}. 
Then, 
take $\tau_{n+1} $ equal to the minimum between $T_k$ and 
$
\tau_{n} 
+
\max \left\{  \Delta_{min} , 
\min \left\{ \Delta_{max}, \Delta_{*,n},  \mathfrak{fac}_{max} \cdot  \left( \tau_{n} - \tau_{n-1} \right) \right\}   \right\} 
$.

\item Compute $Y_{n+1}$ according \eqref{eq:Euler-Maruyama}.
Return to Step 4 with  $n$ updated to $n+1$.

\end{enumerate}
\end{strategy}

\subsection{Variable step-size scheme of second order}
\label{subsec:Extrapolation} 

In this subsection we generalize to the SDEs  
the local extrapolation procedure for variable step-size schemes for ODEs based on embedded formulas 
(see, e.g., \cite{HairerNorsettWanner1993,Shampine1994,Shampine2003}).
Namely,
by interchanging the roles of  $Y_{n} \left( \tau_n + \Delta \right)$  and $\hat{Y}_{n} \left( \tau_n + \Delta \right)$ 
in Subsections \ref{sec:BasicLossFunction} and \ref{sec:adaptiveEuler-Maruyama} we obtain
the following weak second order variable step-size scheme.

\begin{strategy}
\label{AdaptiveO2_Extrap}
Proceed as in Adaptive scheme \ref{AdaptiveEM1}
except  in Steps 3 and 6 we define $Y_1$ and $Y_{n+1}$ to be the right-hand side of \eqref{eq:SecondOrderSchemeN}.
\end{strategy}

\begin{remark}
We can reduce the computational budget required for simulating  \eqref{eq:Euler-Maruyama} and  \eqref{eq:SecondOrderSchemeN}  by replacing the normally distributed random variables,
which model $W^k_{\tau_{n+1}} - W^k_{\tau_n}$,
by uniform or discrete random variables (see, e.g., Section 2.6 of \cite{Milstein2004}).
For example, 
in the right-hand side of \eqref{eq:SecondOrderSchemeN} we can choose $ \xi^{ k }_{n+1}$ given by
$ \mathbb{P} \left( \xi^{ k }_{n+1}  = \pm \sqrt{3} \right) = 1/6 $ and $ \mathbb{P} \left( \xi^{ k }_{n+1}  = 0 \right) = 2/3 $
(see, e.g., \cite{Kloeden1992,Milstein2004}).
\end{remark}

We are in the context of Subsections \ref{sec:BasicLossFunction} and \ref{sec:adaptiveEuler-Maruyama} 
but with $Y_{n} \left( \tau_n + \Delta \right)$  and $\hat{Y}_{n} \left( \tau_n + \Delta \right)$  swapped.
That is,
the additional approximation is defined by \eqref{eq:Euler-MaruyamaDelta}
and we  compute $Y_{n+1}$ by the second order  weak It\^o-Taylor  scheme obtained by iterating \eqref{eq:SecondOrderSchemeN}.
Since the difference between   \eqref{eq:Euler-MaruyamaDelta} and  \eqref{eq:SecondOrderSchemeN}
is measured by the ancillary local discrepancy function $\widetilde{\emph{L}}_n \left( \Delta \right)$ 
defined  by \eqref{eq:4.6},
we move forward with the step-sizes $\tau_{n+1} - \tau_{n}$ 
provided by Definition \ref{def:Delta_*} as in Subsection  \ref{sec:adaptiveEuler-Maruyama}.


\section{Adaptive adjustment of the  sample-size}
\label{Sec:SampleSize}

In order to estimate  $\mathbb{E} \varphi \left(X_{T_k}\right)$,
where $0 < T_1 < \cdots < T_M = T$ are deterministic times given by the user,
this paper combines the new variable step-size weak schemes
with 
a version of a classical method for determining the final number of simulations of the Monte-Carlo sampling.
Alternatively,
we can use a Multilevel Monte Carlo method (see, e.g.  \cite{GilesLesterWhittle2016}),
but this makes it hard to evaluate the performance of the new adaptive algorithms.

Consider a numerical scheme $ \left( Y_n \right)_{n}$ that approximates the solution of \eqref{eq:SDE} 
at nodes $\left( \tau_n \right)_n $ satisfying 
$ 
 n\left( k \right) : = \inf \left\{n : \tau_n = T_k \right\} < + \infty  
$ 
for all $k= 1, \ldots, M$.
We take $ \left( Y_n \right)_{n}$ to be 
 the Adaptive schemes \ref{AdaptiveEM1} and \ref{AdaptiveO2_Extrap}.
Then,
for every $k$ 
we simulate independent and identically distributed random variables
$Y^{\left\{ k ,1 \right\}}, \ldots, Y^{\left\{ k , S_k \right\}}$ 
distributed according to the law of $Y_{n\left( k \right)}$.
Thus,
 $
 \mathbb{E} \varphi \left(X_{T_k}\right) 
 \approx
 \mathbb{E} \varphi \left( Y_{n\left( k \right)} \right) 
 \approx
 \frac{1}{S_k}
 \sum_{s=1}^{S_k} \varphi \left( Y^{\left\{ k ,s \right\}} \right) 
 $.
 We would like to find the number of simulations $S_k$ necessary for 
 \begin{equation}
 \label{eq:3.8}
 \mathbb{P} \left( 
\left\vert 
 \mathbb{E} \varphi \left( Y_{n\left( k \right)} \right) 
 -
 \frac{1}{S_k}
 \sum_{s=1}^{S_k} \varphi \left( Y^{\left\{ k ,s \right\}} \right) 
 \right\vert
 <
 AS_{tol} + RS_{tol} \left\vert \mathbb{E} \varphi \left( Y_{n\left( k \right)} \right)  \right\vert
\right) \geq 1 - \delta  ,
\end{equation}
where 
$AS_{tol}$ (resp. $RS_{tol}$) is the absolute (resp. relative) tolerance parameter
and 
$\delta \in \left] 0, 1 \right[$ provides the confidence level.
Following Section 3.4.1 of  \cite{GrahamTalay2013} 
we apply the Bikelis theorem (see, e.g., \cite{Petrov1995}), together with Komatsu's inequality, to 
deduce that \eqref{eq:3.8} holds under the condition 
\begin{align*}
\frac{\delta}{2} 
&  \geq
  \sqrt{\frac{2}{\pi}}
\frac{ \varsigma_k}{
\varepsilon_k \sqrt{S_k} + \sqrt{ 2 \varsigma_k^2 + \varepsilon_k^2 S_k}
}
e^{- \frac{\varepsilon_k^2 S_k}{2 \varsigma_k^2}} 
\\
& \quad
+
\frac{
\mathbb{E} \left( \left\vert \varphi \left( Y_{n\left( k \right)} \right) \right\vert^3  \right) 
+
\left\vert \mathbb{E} \varphi \left( Y_{n\left( k \right)} \right) \right\vert  
\mathbb{E} \left( \varphi \left( Y_{n\left( k \right)} \right)^2  \right)
-
2  \mathbb{E} \varphi \left( Y_{n\left( k \right)} \right) 
\mathbb{E} \left( \varphi \left( Y_{n\left( k \right)} \right) \left\vert \varphi \left( Y_{n\left( k \right)} \right) \right\vert  \right) 
}{ \sqrt{S_k} \left( \varsigma_k + \varepsilon_k \sqrt{S_k}  \right)^3}
,
\end{align*} 
where
$
\varepsilon_k = AS_{tol} + RS_{tol} \left\vert \mathbb{E} \varphi \left( Y_{n\left( k \right)} \right)  \right\vert
$
and
$
 \varsigma_k^2
 =
 \mathbb{E} \left(  \varphi \left( Y_{n\left( k \right)} \right) ^2 \right)
 - 
 \left(   \mathbb{E} \varphi \left( Y_{n\left( k \right)} \right)  \right) ^2
 $
 (see also, e.g., \cite{Szepessy2001,Mordecki2008,Bayer2014,Gobet2016,Hickernell2013}).
Similar to \cite{GrahamTalay2013},
estimating from a sample  the expected values we arrive at the following adaptive strategy.

\begin{sampling}
\label{sampling:1}
Consider the safety factor $ \mathfrak{sfac}_{max} > 1$, 
the absolute and relative tolerance sample  parameters $AS_{tol}, RS_{tol} > 0$,
the minimum and maximum sample sizes $S_{min}$, $S_{max}$,
and the  confidence level $ 1- \delta \in \left] 0, 1 \right[$.
Then:

\begin{enumerate}
 
 \item Set $M_* = M$, $S_k^{old} = 0$ and $S_k = S_{min}$ for all $k=1, \ldots, M$. 
 
 \item 
 For any $k=1, \ldots, M_*$,
 simulate a realization $y^{\left\{ k ,s \right\}}$ of $Y^{\left\{ k ,s \right\}}$ 
 for all $s = S_k^{old}+1, \ldots, S_k$,
and keep the old realizations  $y^{\left\{ k ,s \right\}}$ with $s = 1,  \ldots, S_k^{old}$.

 \item
 For any $k=1, \ldots, M_*$,
 compute
 $
 \bar{f}_{j,k} = \frac{1}{S_k} \sum_{s = 1}^{S_k} f_j \left( \varphi \left(  y^{\left\{ k ,s \right\}}  \right)  \right) 
 $
 for all $j = 1, \ldots, 4$,
 where
 \[
 f_1 \left( x \right) = x, \quad
 f_2 \left( x \right) = x^2, \quad
 f_3 \left( x \right) = x \left\vert x \right\vert, 
 \text{  and }
 f_4 \left( x \right) = \left\vert x \right\vert ^3. 
 \]
 Then,
 take
 $
 \bar{\varepsilon}_k =  AS_{tol} + RS_{tol} \left\vert  \bar{f}_{1,k} \right\vert
 $
 and
 $
\left( \bar{\varsigma}_k\right)^2 =  \bar{f}_{2, k} -  \left( \bar{f}_{1 k} \right)^2
$.

\item For all $k=1, \ldots, M_*$,
find $S_k^{new}$ such that
\begin{equation*}
 \sqrt{\frac{2}{\pi}}
\frac{ \bar{\varsigma}_k}{
\bar{\varepsilon}_k \sqrt{S_k^{new}} + \sqrt{ 2 \bar{\varsigma}_k^2 + \bar{\varepsilon}_k^2 S_k^{new}}
}
e^{- \frac{ \bar{\varepsilon}_k^2 S_k^{new}}{2 \bar{ \varsigma }_k^2}}
+
\frac{
 \bar{f}_{4, k} +  \left\vert  \bar{f}_{1, k} \right\vert \bar{f}_{2 , k} - 2  \bar{f}_1  \bar{f}_{3, k}
}{ \sqrt{S_k^{new}} \left( \bar{ \varsigma }_k + \bar{ \varepsilon }_k \sqrt{S_k^{new}}  \right)^3}
\leq
\frac{\delta}{2} .
\end{equation*}

\item
In case 
$\min \hspace{-1pt}\left\{ S_k^{new}, S_{max} \right\} \hspace{-1pt} \leq S_k$ for all $k= 1, \ldots, M_*$
we stop, 
and so $\mathbb{E} \varphi \left(X_{T_k}\right) $ is approximated by $\bar{f}_{1,k}$
for all $k=1,\ldots,M$.
Else, 
return to Step 2 after updating the values of $M_*$, $S_k^{old} $ and $S_k$ as follows:
for any $k=1, \ldots, M_*$ we set $S_k^{old} = S_k$ 
and define 
\begin{equation}
\label{eq:DeltaSk}
\Delta S_k 
=
\max \left\{ 0, 
\max_{j = k, \ldots, M_*} \left\{ 
\min \left\{ S_j^{new}, \mathfrak{sfac}_{max} \, S_j , S_{max} \right\} - S_j
\right\} 
\right\} .
\end{equation}
Then, the new value of $M_*$ is 
\[
M_* 
=
\begin{cases}
 M_*
 &
 \text{ if } \Delta S_{M_*} > 0 
\\
\min \left\{ k = 1, \ldots, M_* : \Delta S_k = 0 \right\} 
&
 \text{ if } \Delta S_{M_*} = 0 
\end{cases} ,
\]
and
$S_k$ is update to $S_k + \Delta S_k$ for all $k$  less than or equal to the new $M_*$. 
\end{enumerate}
\end{sampling}

\begin{remark}
In Sampling Method \ref{sampling:1},
from \eqref{eq:DeltaSk} we have $S_{k} \geq S_{k+1}$ for all $k = 1, \ldots, M_*-1$.
Some iterations of certain problems yield
$M_* < M$ like in Sections \ref{subsec:LinearScalar} and \ref{sec:SmallAdditiveNoise},
and so we have to increment the sample size of $Y_n$ only for $n \leq n\left( M_* \right) < N$,
which leads to a decrease in the computational cost of the algorithm. 
\end{remark}

\begin{remark}
\label{re:sampling:1}
In Step 5 of Sampling Method \ref{sampling:1},
we can alternatively take $M_* = M$ and update $S_k$ to 
$
S_k = \min  \left\{  \max \left\{ S_1^{new}, \ldots, S_M^{new} \right\},  \mathfrak{sfac}_{max} \, S_k, S_{max} \right\} 
$.
Hence,
$S_k$ does not depend on $k$,
and 
we have to simulate realizations of $Y_n$ for all $n= 0, \ldots, N$.
\end{remark}

\begin{remark}
Returning to the step 4 of Sampling Method \ref{sampling:1},
for all $x > 0$ we define 
\[
 \phi \left( x \right)
 =
\sqrt{\frac{2}{\pi}}
\frac{ \bar{\varsigma}_k}{
\bar{\varepsilon}_k \, x + \sqrt{ 2 \bar{\varsigma}_k^2 + \bar{\varepsilon}_k^2 \, x^2}
}
e^{- \frac{ \bar{\varepsilon}_k^2 x^2}{2 \bar{ \varsigma }_k^2}}
+
\frac{
 \bar{f}_{4, k} +  \left\vert  \bar{f}_{1, k} \right\vert \bar{f}_{2 , k} - 2  \bar{f}_1  \bar{f}_{3, k}
}{ x \left( \bar{ \varsigma }_k + \bar{ \varepsilon }_k \, x  \right)^3}
-
\frac{\delta}{2} .
\]
Hence,  $ \phi $ is strictly decreasing and smooth. 
If $ \phi \left( \sqrt{S_k} \right) < 0 $, then we take $S_k^{new} = S_k$.
Else, 
we can compute the root $x_0$ of $\phi$ by using a bisection-like method
in case $ \phi \left( \sqrt{S_{max}} \right) < 0 $,
and so we select $S_k^{new} = \left[ x_0^2 \right] +1$.
For this purpose, we here use  the function {\it fzero } of MATLAB.
We choose $S_k^{new} = S_{max}$ provided that $ \phi \left( \sqrt{S_{max}} \right) > 0 $.  
\end{remark}

\section{Numerical experiments}

\label{sec:NumericalExperiment}

\subsection{Basic linear scalar SDE}
\label{subsec:LinearScalar}

We compute $\mathbb{E} \log \left( 1 + \left( X_t \right)^2 \right)$,
where 
\begin{equation}
 \label{eq:Test1}
 X_{t}
=
X_{0}  + \int_{0}^{t} \sigma X_{s} \,  dW^1_{s} 
\hspace{1cm}
\text{for all }t \geq 0   
\end{equation}
with $ \sigma  =  10$ and $X_0 = 5$.
The scalar SDE \eqref{eq:Test1} is a model problem
for the numerical solution of SDEs with multiplicative noise whose diffusion coefficients can take large values (see, e.g., \cite{Higham2000,Milstein1998}).
Contrary to the fact that $\mathbb{E} \log \left( 1 + \left( X_t \right)^2 \right)$ converges to $0$ with exponential rate
as  $ t \rightarrow + \infty $,
the trajectories of the Euler scheme, applied to  \eqref{eq:Test1} with constant step-size,
grow excessively when the step-size is not small enough.
Since the coefficients  $b$ and  $\sigma_k$ of \eqref{eq:Test1} have bounded derivatives of any order,
 \eqref{eq:Test1}  satisfies Hypotheses \ref{hyp:SDE} and \ref{hyp:EDP} 
(see, e.g., \cite{GrahamTalay2013,Kloeden1992,Krylov1999,Milstein2004}),
and  Adaptive schemes  \ref{AdaptiveEM1} and \ref{AdaptiveO2_Extrap} fullfil  Hypothesis \ref{hyp:WeakOrden1}.
By construction, Adaptive schemes  \ref{AdaptiveEM1} and \ref{AdaptiveO2_Extrap}  satisfy Hypotheses \ref{hyp:AcotN}.

The solution of \eqref{eq:Test1} is
$
X_t = e^{ - \frac{1}{2} \sigma^2 \, t +  \sigma W^1_t } X_0
$.
We got the reference values for $\mathbb{E} \log \left( 1 + \left( X_{T_k} \right)^2 \right)$
given in Table \ref{Table:E1.1},
where $T_1 = 0.5$, $T_2 = 1$ and $T_3 = 40$,
by running Sampling Method \ref{sampling:1}.
To this end,
we choose $Y^{\left\{ k ,s \right\}}$  distributed according to the law of $X_{T_k}$,
and we select
$\delta = 0.01$,
the tolerances 
$ AS_{tol} = RS_{tol} = 10^{-6}$,
and the sample-size parameters 
$S_{min} = 10^8$,  $ S_{max} = 10^{14}$ and 
$\mathfrak{sfac}_{max} = 120$.
We estimate the error
\begin{equation}
\label{eq:abserror1}
 \bar{\epsilon}_1 \left( Y \right)
=
\max_{k=1,2,3}
\left\vert
\mathbb{E} \log \left( 1 + \left( Y_{n\left( k \right)} \right)^2 \right) - \mathbb{E} \log \left( 1 + \left( X_{T_k} \right)^2 \right)
\right\vert ,
\end{equation}
where 
\begin{equation}
 \label{eq:def_nk}
 n\left( k \right)  = \inf \left\{n : \tau_n = T_k \right\} < + \infty  ,
\end{equation}
and $Y$ stands for a numerical scheme 
such that the law of $ Y_{n\left( k \right)}$ approximates the distribution of $ X_{T_k}$.
Better approximations are associated with lower values of the tolerance parameters.

\renewcommand{\arraystretch}{1}
\renewcommand\tabcolsep{3pt}
\begin{table}[tb]
\caption{References values for the solution of \eqref{eq:Test1}.}
\begin{center}
\footnotesize
\def\arraystretch{1.5}

\begin{tabular}{|c||c|c|c|}
\hline
$T_k$ & $0.5$ & $1$ & $40$ 
\\
\hline 
$\mathbb{E} \log \left(1 + \left( X_{T_k} \right)^2 \right)$ &
$1.95279045\cdot 10^{-3}$ & $3.0047531\cdot 10^{-6}$ & $ 0 $ 
\\
\hline
\end{tabular}

\end{center}

\label{Table:E1.1}
\end{table}

\begin{figure}[tb]
\centering
 \resizebox{15cm}{!}{\includegraphics{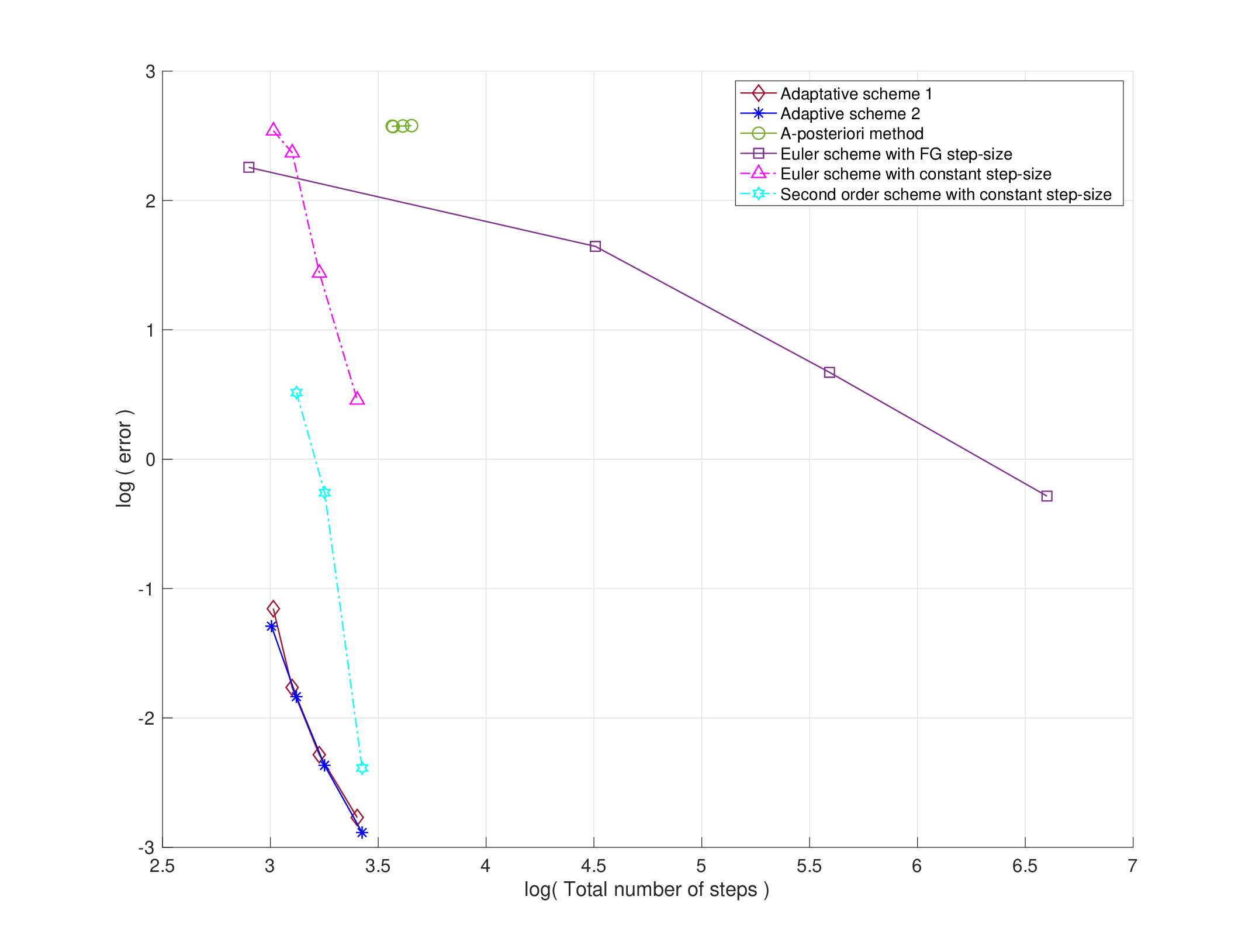}}
\caption{
The base $10$ logarithm of the error \eqref{eq:abserror1} as a function of  
the base $10$ logarithm of  the expected value of the total number of steps 
in the numerical solution of  \eqref{eq:Test1} with $ \sigma  =  10$ and $X_0 = 5$.
Better approximations are associated with lower values of the tolerance parameters
$10^{-2}, 10^{-3}, 10^{-4}$ and $10^{-5}$. }
\label{fig:Errores_Bilineal}
\end{figure}

Figure \ref{fig:Errores_Bilineal} plots $\bar{\epsilon}_1 \left( Y \right)$ 
as a function of the mean value of the total number of steps used by each scheme $Y$.
First,
Figure \ref{fig:Errores_Bilineal} presents estimations of the error \eqref{eq:abserror1}
with $Y$ being Adaptive scheme  \ref{AdaptiveEM1} (diamond) and  Adaptive scheme  \ref{AdaptiveO2_Extrap} (asterisk)
as a function of 
the mean value of the number of steps used by 
Adaptive schemes  \ref{AdaptiveEM1} and \ref{AdaptiveO2_Extrap}   to arrive at $T = 40$
with tolerances $Atol = Rtol =  10^{-2}, 10^{-3}, 10^{-4}, 10^{-5} $.
 For simplicity we take $Atol = Rtol$.
We choose $\mathfrak{fac}_{max} = 20$,
$\Delta_{max} = 1 / 2$, 
and
$\Delta_{min} = 2 \, eps$,
where $eps$ is the distance from $1$ to the next larger double precision number
($2^{-52}$ in MATLAB).
We sample Adaptive schemes  \ref{AdaptiveEM1} and \ref{AdaptiveO2_Extrap} by using Sampling Method \ref{sampling:1} 
with parameters 
$\delta = 0.01$, $AS_{tol} = RS_{tol} = 10^{-4}$, $S_{min} = 10^5$,  $ S_{max} = 10^9$ 
and $\mathfrak{sfac}_{max} = 120$. 
According to Figure \ref{fig:Errores_Bilineal},
there is steady decrease in the error $\bar{\epsilon}_1 $ 
that is guided by the  the tolerance parameters.
At the same time,
the mean value of the number of steps only increases slightly.
Moreover, we estimate the CPU time of 
$10^5$ realizations of Adaptive schemes  \ref{AdaptiveEM1} and \ref{AdaptiveO2_Extrap} in a 3,3 GHz Intel Core i5.
We obtain that the ratio between the CPU time of  Adaptive scheme \ref{AdaptiveO2_Extrap} and the  CPU time of Adaptive scheme  \ref{AdaptiveEM1}  is equal to $0.9914$, $1.0920$,  $1.1214$, $1.0896$ 
when $Atol = Rtol =  10^{-2}, 10^{-3}, 10^{-4}, 10^{-5} $, respectively.
Then, Adaptive schemes  \ref{AdaptiveEM1} and \ref{AdaptiveO2_Extrap}  show  similar performance in this example.


Second,
we compare the application of our selection mechanisms of the step-sizes 
with the use of constant step-sizes.
Following the notation of Subsection \ref{subsec:Extrapolation} we define the recursive scheme:
\begin{equation}
\label{eq:SecondOrderScheme}
\begin{aligned}
 Y_{n+1} 
& =
Y_{n} 
+  b \left( \tau_n ,  Y_{n} \right) \left( \tau_{n+1}  - \tau_{n} \right) 
+ \frac{1}{2} \mathcal{L}_0   b \, \left( \tau_n ,  Y_{n} \right) \left( \tau_{n+1}  - \tau_{n} \right)^2 
+ \sum_{k=1}^{m} \sigma_k \left(  \tau_n , Y_{n}  \right)  \sqrt{\tau_{n+1}  - \tau_{n}} \, \xi^{ k }_{n+1} 
 \\
 & \quad 
 + \frac{1}{2}  \sum_{k=1}^{m} \left( 
 \mathcal{L}_k b \left(  \tau_n ,  Y_{n}  \right)  +  \mathcal{L}_0  \sigma_k \left(  \tau_n ,  Y_{n}  \right)   \right)
 \left( \tau_{n+1}  - \tau_{n} \right)^{3/2}  \xi^{ k }_{n+1}  
 + \sum_{k, \ell =1}^{m} \mathcal{L}_k  \sigma_{\ell} \left(  \tau_n ,  Y _{n}  \right)  
 \left( \tau_{n+1}  - \tau_{n} \right) \xi^{ k , \ell}_{n+1} .
\end{aligned}
\end{equation}
Figure \ref{fig:Errores_Bilineal}  provides estimations of the error \eqref{eq:abserror1}
appearing in solving \eqref{eq:Test1} by the Euler scheme  \eqref{eq:Euler-Maruyama} (triangle) and
the second weak  order scheme \eqref{eq:SecondOrderScheme} (hexagram)
both with constant step-sizes.
As previously proceeded,
we apply Sampling Method \ref{sampling:1} with parameters 
$\delta = 0.01$, $AS_{tol} = RS_{tol} = 10^{-4}$, $S_{min} = 10^5$,  $ S_{max} = 10^9$ 
and $\mathfrak{sfac}_{max} = 120$.
The error corresponding to the scheme \eqref{eq:SecondOrderScheme} with step-size $0.0395$ ($1013$ integration steps) grow towards $+ \infty$, and so it has not been drawn in Figure \ref{fig:Errores_Bilineal}.
Figure \ref{fig:Errores_Bilineal}  shows that, for a similar number of total recursive steps,
Adaptive schemes  \ref{AdaptiveEM1} and \ref{AdaptiveO2_Extrap} greatly improve the accuracy of
their constant step-size versions,
i.e., 
the Euler scheme  \eqref{eq:Euler-Maruyama} and the second weak  order scheme \eqref{eq:SecondOrderScheme}
both with constant step-size.

We observe that Adaptive schemes  \ref{AdaptiveEM1} and \ref{AdaptiveO2_Extrap} 
show an almost sure asymptotically stable behavior, 
which contrasts with the unstable behavior of  the underlying schemes with constant step-size.
This is part of the reason why 
Adaptive schemes  \ref{AdaptiveEM1} and \ref{AdaptiveO2_Extrap}  achieve good accuracy 
with a small number steps. 
From the proof of Theorem 4.2 of \cite{Higham2007} we deduce that 
the Euler-Maruyama scheme \eqref{eq:Euler-Maruyama} applied to  \eqref{eq:Test1} 
with constant step-size $\tau_{n+1}  - \tau_{n} = \Delta $
is almost sure exponentially stable if
$
 5 \, \sigma^4  \Delta^2 + 10.5 \, \sigma^2  \Delta - 1 < 0 ,
$
which implies 
$
\sigma^2  \Delta < \left( -5.25 + \sqrt{130.25} \right) /10 \approx 0.61627
$.
This stability analysis suggests us to use the Euler-Maruyama scheme with a constant step-size less than 0.0061627,
with yields a uniform partition with at least $6491$ steps.
Since $\log_{10} \left( 6491 \right) \approx 3.8112$, 
Figure \ref{fig:Errores_Bilineal} shows that Adaptive scheme  \ref{AdaptiveEM1} 
needs to take fewer steps to achieve a good accuracy.

Third, since there is no well-established automatic selection mechanism 
of the step-sizes of weak schemes based on local error control,
we consider the adaptive Euler scheme introduced by  \cite{Szepessy2001}.
We apply the stochastic time stepping algorithm introduced by  \cite{Szepessy2001}
to solve \eqref{eq:Test1} by the Euler scheme 
in each time interval  $\left[ T_{k-1}, T_k \right] $, 
where $T_0 = 0$ and $k=1,2,3$.
That is, for each Brownian path we generate a discretization of $\left[ 0, T_1 \right] $ for \eqref{eq:Euler-Maruyama}
by means of the stochastic time stepping algorithm of \cite{Szepessy2001},
then we apply the same method to obtain a discretization of  $\left[ T_1, T_2 \right] $, and so on.
Plotting circles,
 Figure \ref{fig:Errores_Bilineal} presents the error  $\bar{\epsilon}_1 \left( Y \right)$
 for the Euler scheme \eqref{eq:Euler-Maruyama} with $\tau_{n+1}  -  \tau_{n}$
 generated as we just described.
 In order to provide more details, 
 Table \ref{Table:MaxErrorApost} gives estimations of the mean value of $n\left( 3 \right)$
(i.e., the number of nodes of the final time discretization),
and
the expected value of the total number of steps taken by the Euler-Maruyama scheme 
for each Brownian motion trajectory
(i.e., the sum of the number of nodes on all partitions generated by 
the algorithm for each realization of the Brownian motion).
Following   \cite{Szepessy2001} we include 
\begin{equation}
 \label{eq:Es}
 \mathrm{E}_S = \max_{k=1,2,3} \frac{1.65}{\sqrt{S_k}} \left(
\frac{1}{S_k}\sum_{\ell=1}^{S_k}  \varphi \left( Y^{\left\{k,\ell \right\}} \right)^2 
-
\left( \frac{1}{S_k}\sum_{\ell=1}^{S_k} \varphi \left( Y^{\left\{k,\ell \right\} } \right) \right)^2
\right)^{1/2} ,
\end{equation}
where
$\varphi \left( x \right)  = \log \left( 1 + x^2 \right) $
and
we use the notation of Section \ref{Sec:SampleSize}.
We restrict the sample size $S_k$ to $2 \times 10^6$ in order to reduce the total runtime to some days,
and we take the minimal step-size equal to $10^{-7}$.
Figure \ref{fig:Errores_Bilineal}  and Table \ref{Table:MaxErrorApost}  indicate that
the random a-posteriori strategy given by \cite{Szepessy2001}
has a poor accuracy in this example.
In case we compute $\mathbb{E}  X_t $ instead of  $\mathbb{E} \log \left( 1 + \left( X_t \right)^2 \right)$
we can check theoretically that the stochastic time stepping algorithm described in  \cite{Szepessy2001}
does not refine the initial partition of each time interval $\left[ T_{k-1}, T_k \right] $.
This follows by substituting $ b \left( t, x \right) = 0$ and $ \frac{d^2}{dx^2} \, \varphi \left( x \right) = 0$
into the formulation of the stochastic time stepping method given by  \cite{Szepessy2001}.

\renewcommand{\arraystretch}{1}
\renewcommand\tabcolsep{3pt}
\begin{table}[tb]
\caption{Error \eqref{eq:abserror1} arising from the solution to  \eqref{eq:Test1} with $ \sigma  =  10$ and $X_0 = 5$ by using the stochastic time stepping algorithm designed by  \cite{Szepessy2001} with initial uniform step-size $0.1$.}
\begin{center}
\footnotesize
\def\arraystretch{1.5}

\begin{tabular}{|c||c|c|c|c|}
\hline  
$Tol $ &                                $10^{-2}$      & $ 10^{-3}$    & $10^{-4}$   &  $10^{-5}$  
\\
\hline 
$ \bar{\epsilon}_1$ &    $378.9119$  &  $376.6461$ &   $374.8767$     & $372.3506$
\\
\hline
Mean total number of steps &    $4514.6$  &  $4110.4$     &  $3673.9$   & $3707.8$
\\
\hline
$\mathbb{E} \, n\left( 3 \right)$ & $1030.1$ &   $1001.6$       & $971.9138$   &  $980.7327$
\\
\hline
$\mathrm{E}_S$ &                         $0.1423$   &   $0.1435$     &  $0.1443$   &  $0.1462$
\\
\hline
\end{tabular}	
\end{center}
	
\label{Table:MaxErrorApost}
\end{table}

Finally,
in \cite{FangGiles2020} is proposed to solve scalar SDEs
and certain class of multidimensional SDEs by using the Euler-Maruyama scheme \eqref{eq:Euler-Maruyama} with  
the step-size
\begin{equation}
\label{eq:PasoFG} 
\tau^{FG}_{n+1} 
= 
\tau^{FG}_{n}  +
Tol * \max \left\{ 1, \left\Vert  Y_{n}   \right\Vert    \right\}  / \max \left\{ 1, \left\Vert  b \left( Y_{n} \right)  \right\Vert  \right\}  
\end{equation}
(see also \cite{KellyLord2016} for related strategies).
Applying this adaptive strategy to \eqref{eq:Test1} we get 
\[
\tau^{FG}_{n+1} 
= 
\tau^{FG}_{n}  + Tol * \max \left\{ 1, \left\Vert  Y_{n}   \right\Vert    \right\}  
,
\]
which does not depend on $\sigma$.
According to  Figure \ref{fig:Errores_Bilineal}  
the Euler scheme with nodes $\tau^{FG}_{n}$, 
which is represented by squares,
achieves poor performance for solving \eqref{eq:Test1} 
with tolerances $Tol = 10^{-2}, 10^{-3}, 10^{-4}, 10^{-5}$. 
Due to the high number of steps per realization,
we restrict the sample-size corresponding to  
the tolerances $Tol =10^{-4} $ and  $Tol =10^{-5}$ to $1.2 \times 10^7$ and $10^5$, respectively.

\subsection{SDE with small additive noise}
\label{sec:SmallAdditiveNoise}

From \cite{Biscay1996} we take the linear non-autonomous SDE with additive noise
\begin{equation}
\label{JCarlos}
X_{t}
=
X_0
- 
\int_{0}^{t} s^2 X_{s}  \, ds + \sigma  \int_{0}^{t}  \frac{\exp \left( - s^3 / 3 \right) }{s+1} dW^{1}_{s}
\hspace{1cm}
\text{for all }t \geq 0  ,
\end{equation}
where 
$ \sigma = 1$ and $X_0 = 1$.
Then
\begin{equation}
\label{JCarlos_solution}
X_{t}
=
\exp \left( - \frac{t^3}{ 3} \right) \left( X_0 + \sigma \int_0^t  \frac{1}{s+1}  dW^{1}_{s} \right) ,
\end{equation}
and so $X_{t}$ converges exponentially fast to $0$ as $t \rightarrow + \infty$.
In case $s$ is not small, 
the drift coefficient $b\left( s, x \right) = - s^2 x$ can take large values.
Hence,
the Euler scheme with uniform step-size has numerical instabilities solving \eqref{JCarlos}
(see, e.g.,  \cite{Biscay1996,Carbonell2006}).
We compute $\mathbb{E} \left(  \left( X_{T_k} \right)^3 \right)$,
with 
$k=1,2,3$, 
$T_1 = 0.5$, $T_2 = 1$ and $T_3 = 20$.
From \eqref{JCarlos_solution} we deduce that
\[
\mathbb{E} \left(  \left( X_t \right)^3 \right)
=
\exp \left( - t^3 \right) \left( 
\mathbb{E} \left(  \left( X_0 \right)^3 \right) + 3 \sigma^2 \mathbb{E} \left(  X_0 \right) \left( 1 -  \frac{1}{t+1}  \right)
\right) .
\]
Similar to Section \ref{subsec:LinearScalar}
the coefficients  $b$ and  $\sigma_k$ of \eqref{JCarlos} have bounded derivatives of any order
in $\left[ 0 , T_3 \right] \times \mathbb{R}$,
and so  \eqref{JCarlos}  satisfies Hypotheses \ref{hyp:SDE} and \ref{hyp:EDP},
and  Adaptive schemes  \ref{AdaptiveEM1} and \ref{AdaptiveO2_Extrap} fullfil  Hypothesis \ref{hyp:WeakOrden1}.
By construction, Adaptive schemes  \ref{AdaptiveEM1} and \ref{AdaptiveO2_Extrap}  satisfy Hypotheses \ref{hyp:AcotN}.

\begin{figure}[tb]
\centering
 \resizebox{12cm}{!}{\includegraphics{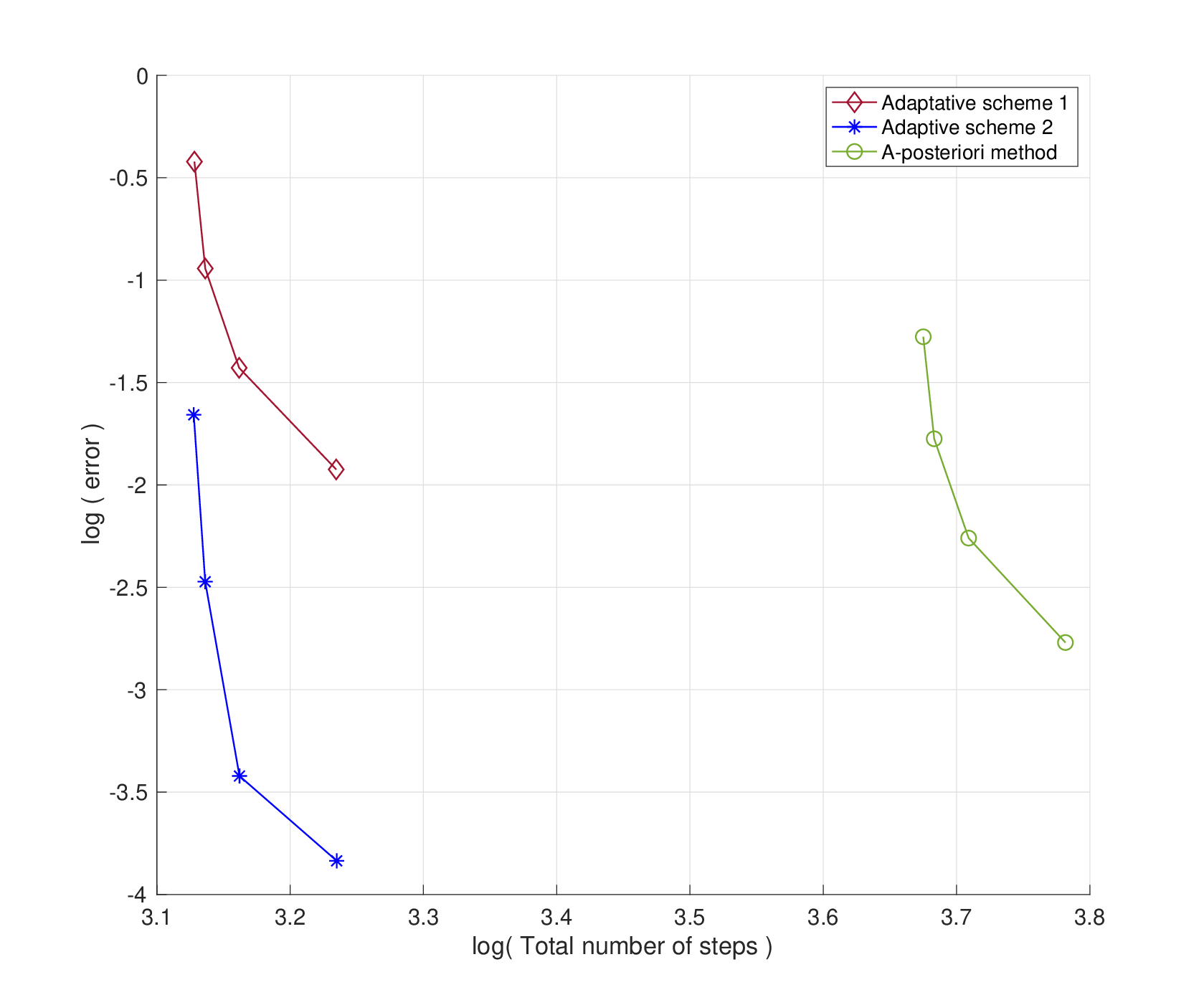}}
\caption{
The base $10$ logarithm of the weak error $ \bar{\epsilon}_2$ as a function of  
the base $10$ logarithm of  the expected value of the total number of steps 
in the numerical solution of the SDE with small additive noise  \eqref{JCarlos}
by  Adaptive schemes  \ref{AdaptiveEM1} and \ref{AdaptiveO2_Extrap},
and the stochastic time stepping algorithm given by  \cite{Szepessy2001} with initial uniform step-size $0.5$.
Better approximations are associated with lower values of the tolerance parameters.
}
\label{fig:Errores1_BJC}
\end{figure}

\begin{figure}[tb]
\centering
 \resizebox{12cm}{!}{\includegraphics{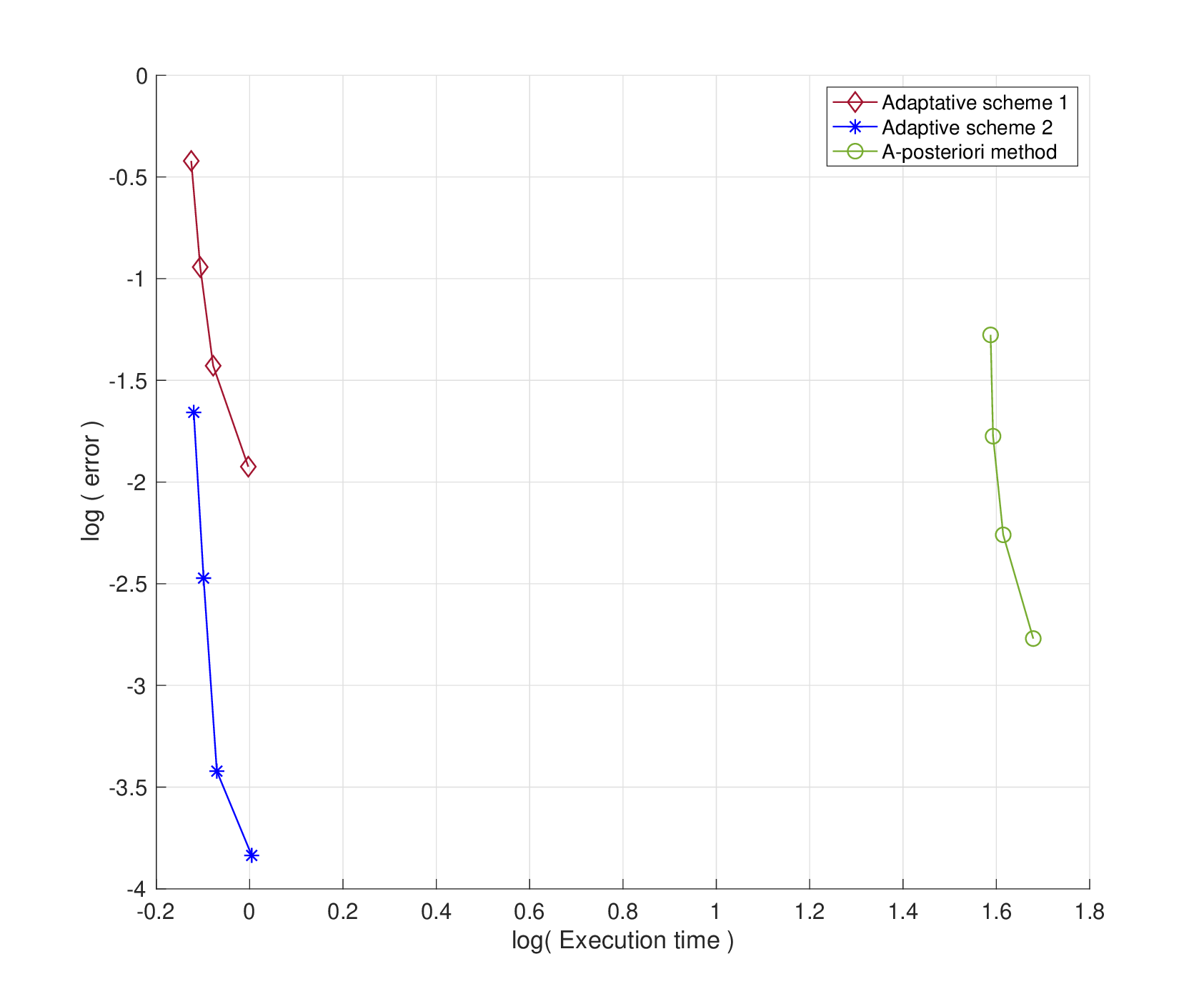}}
\caption{
The base $10$ logarithm of the weak error $ \bar{\epsilon}_2$ as a function of  
the base $10$ logarithm of  the execution time (in minutes)
in the numerical solution of the SDE with small additive noise  \eqref{JCarlos}
by  Adaptive schemes  \ref{AdaptiveEM1} and \ref{AdaptiveO2_Extrap},
and the stochastic time stepping algorithm given by  \cite{Szepessy2001} with initial uniform step-size $0.5$.
Better approximations are associated with lower values of the tolerance parameters.
}
\label{fig:Errores2_BJC}
\end{figure}

Figure \ref{fig:Errores1_BJC} displays estimations of the error 
\begin{equation}
\label{eq:abserror2}
 \bar{\epsilon}_2 \left( Y \right)
=
\max_{k=1,2,3}
\left\vert
\mathbb{E} \left( \left( Y_{n\left( k \right)} \right)^3 \right)  - \mathbb{E} \left( \left( X_{T_k} \right)^3 \right)
\right\vert 
\end{equation}
as a function of the mean value of the total number of steps used by the scheme $Y$,
where  $n\left( k \right)$ is defined by \eqref{eq:def_nk}.
We estimate the error \eqref{eq:abserror2} in case that $Y$ is equal to  
Adaptive schemes \ref{AdaptiveEM1} and \ref{AdaptiveO2_Extrap}
with tolerances  $Atol = Rtol  =10^{-2}, 10^{-3}, 10^{-4},10^{-5}$,
and parameters  
$\Delta_{min} = 2 \, eps$, $\Delta_{max} = 1 / 2$,  $\mathfrak{fac}_{max} = 20$.
To this end, we apply  Sampling Method \ref{sampling:1}
with  parameters 
$\delta = 0.01$, $AS_{tol} = RS_{tol} = 10^{-4}$, $S_{min} = 10^5$,  $ S_{max} = 10^9$ 
and $\mathfrak{sfac}_{max} = 120$.
Figure \ref{fig:Errores1_BJC}  
shows that 
Adaptive schemes  \ref{AdaptiveEM1} and \ref{AdaptiveO2_Extrap} solve properly \eqref{JCarlos},
improving their accuracy as the tolerance parameters $Atol$ and $Rtol $ decrease.
Moreover, 
the number of steps taken by
Adaptive schemes   \ref{AdaptiveEM1} and \ref{AdaptiveO2_Extrap}  to arrive at $T = 20$
have not increased  significantly as $Atol = Rtol $ decreases. 
This leads to a good computational efficiency.
Adaptive scheme \ref{AdaptiveO2_Extrap} is significantly more accurate than Adaptive scheme  \ref{AdaptiveEM1}
in this example.
Namely, the error of Adaptive scheme \ref{AdaptiveO2_Extrap}  with $ Atol = Rtol  = 10^{-5}$ 
is in the range of the sampling error.

Figure \ref{fig:Errores1_BJC} compares 
Adaptive schemes  \ref{AdaptiveEM1} and \ref{AdaptiveO2_Extrap} 
with the stochastic time stepping algorithm developed by  \cite{Szepessy2001}.
Indeed,  Figure \ref{fig:Errores1_BJC} presents  the error  $\bar{\epsilon}_2 \left( Y \right)$
for the Euler scheme \eqref{eq:Euler-Maruyama} with $\tau_{n+1}  -  \tau_{n}$
obtained by the stochastic time stepping algorithm given by  \cite{Szepessy2001},
which is applied in each time interval $\left[ T_{k-1}, T_k \right]$ as in Section \ref{subsec:LinearScalar}.
With circles we plot  $\bar{\epsilon}_2 \left( Y \right)$ as a function of the mean value of the total number of steps taken by the Euler-Maruyama scheme for each Brownian motion trajectory.
We choose the tolerance parameter $Tol = 10^{-2}, 10^{-3}, 10^{-4}, 10^{-5}$.
According to Figure \ref{fig:Errores1_BJC},
the accuracy of the  Euler scheme with the stochastic adaptive strategy introduced by \cite{Szepessy2001}
is very good, but it requires to compute many steps.
It is worth pointing out that 
the tolerance parameters $Atol$ and $Rtol$ control 
local errors of Adaptive schemes  \ref{AdaptiveEM1} and \ref{AdaptiveO2_Extrap}
via a local discrepancy function.
In contrast,
the parameter $Tol$
controls the global error of the stochastic time stepping method of \cite{Szepessy2001},
and so  we should not compare directly 
the errors $ \bar{\epsilon}_2$ for the same value of the parameters $Atol=Rtol$ and $Tol$.

Similar to Section \ref{subsec:LinearScalar},
we tailor the MATLAB code of the  stochastic adaptive strategy given by \cite{Szepessy2001}
to fit the characteristics of \eqref{JCarlos},
though it  may be not optimum.
Figure \ref{fig:Errores1_BJC} presents $\bar{\epsilon}_2 \left( Y \right)$
as a function of 
the running times spent by $10^6$ trajectories of our 
MATLAB codes of Adaptive schemes \ref{AdaptiveEM1}, \ref{AdaptiveO2_Extrap}
and of the adaptive strategy introduced by \cite{Szepessy2001}. 
Figure \ref{fig:Errores1_BJC}  shows a big difference between the execution times of
Adaptive schemes \ref{AdaptiveEM1}, \ref{AdaptiveO2_Extrap} and the strategy designed by \cite{Szepessy2001}.
The complexity of the new adaptive schemes --based on a priori discrepancy functions--
is lower than that of the a-posteriori strategy designed by \cite{Szepessy2001}.

In order to study the impact of the new adaptive time-stepping strategies,
we solve \eqref{JCarlos} by the Euler scheme  \eqref{eq:Euler-Maruyama} 
(resp. the second weak  order scheme \eqref{eq:SecondOrderScheme}) with the constant step-sizes
$0.01489$, $0.01378$, and $0.01165$
(resp. $0.014906$, $0.01377$, and $0.01164$),
which arise from dividing $T_3$ by 
the mean value of the number of steps taken by Adaptive scheme \ref{AdaptiveEM1}
(resp.  Adaptive scheme \ref{AdaptiveO2_Extrap})
with tolerance parameters $Atol = Rtol  = 10^{-2}, 10^{-4}, 10^{-5}$.
We apply the Sampling Method \ref{sampling:1} as specified in Table 2, i.e., 
with  parameters 
$\delta = 0.01$, $AS_{tol} = RS_{tol} = 10^{-4}$, $S_{min} = 10^5$,  $ S_{max} = 10^9$ 
and $\mathfrak{sfac}_{max} = 120$.
The  error $\bar{\epsilon}_2$ corresponding to the schemes \eqref{eq:Euler-Maruyama} and  \eqref{eq:SecondOrderScheme} with constant step-size grow towards $+ \infty$ in all cases except for the Euler scheme  \eqref{eq:Euler-Maruyama} with the step size $0.01165$ that is equal to $0.01942$.
Then,
Adaptive schemes  \ref{AdaptiveEM1} and \ref{AdaptiveO2_Extrap} avoid the numerical instabilities 
of the schemes \eqref{eq:Euler-Maruyama} and  \eqref{eq:SecondOrderScheme} 
by automatically adjusting their step-sizes.

\subsection{Stochastic Landau equation}
\label{sub:Stochastic_Landau}

\renewcommand{\arraystretch}{1.5}
\renewcommand\tabcolsep{3pt}
\begin{table}[tb]
\caption{ References values for $\mathbb{E} \log \left( 1 + \left( X_{T_k} \right)^2 \right)$,
where $X$ solves the stochastic Landau equation \eqref{eq:Test2}. }
\begin{center}
\footnotesize
\def\arraystretch{1.5}
\begin{tabular}{|c||c|c|c|}
\hline
$T_k$ & $0.5$ & $1$ & $20$ 
\\ \hline
Ex1  &
 $0.660792203$  &  $0.43373205$  & $0.068848214$ 
 \\\hline
Ex2  &
 $0.107344374$  &  $0.170826812$  & $0.112452505$ 
\\\hline
\end{tabular}
\end{center}

\label{Table:E2.1}
\end{table}

\begin{figure}[tb]
\centering
 \resizebox{12cm}{!}{\includegraphics{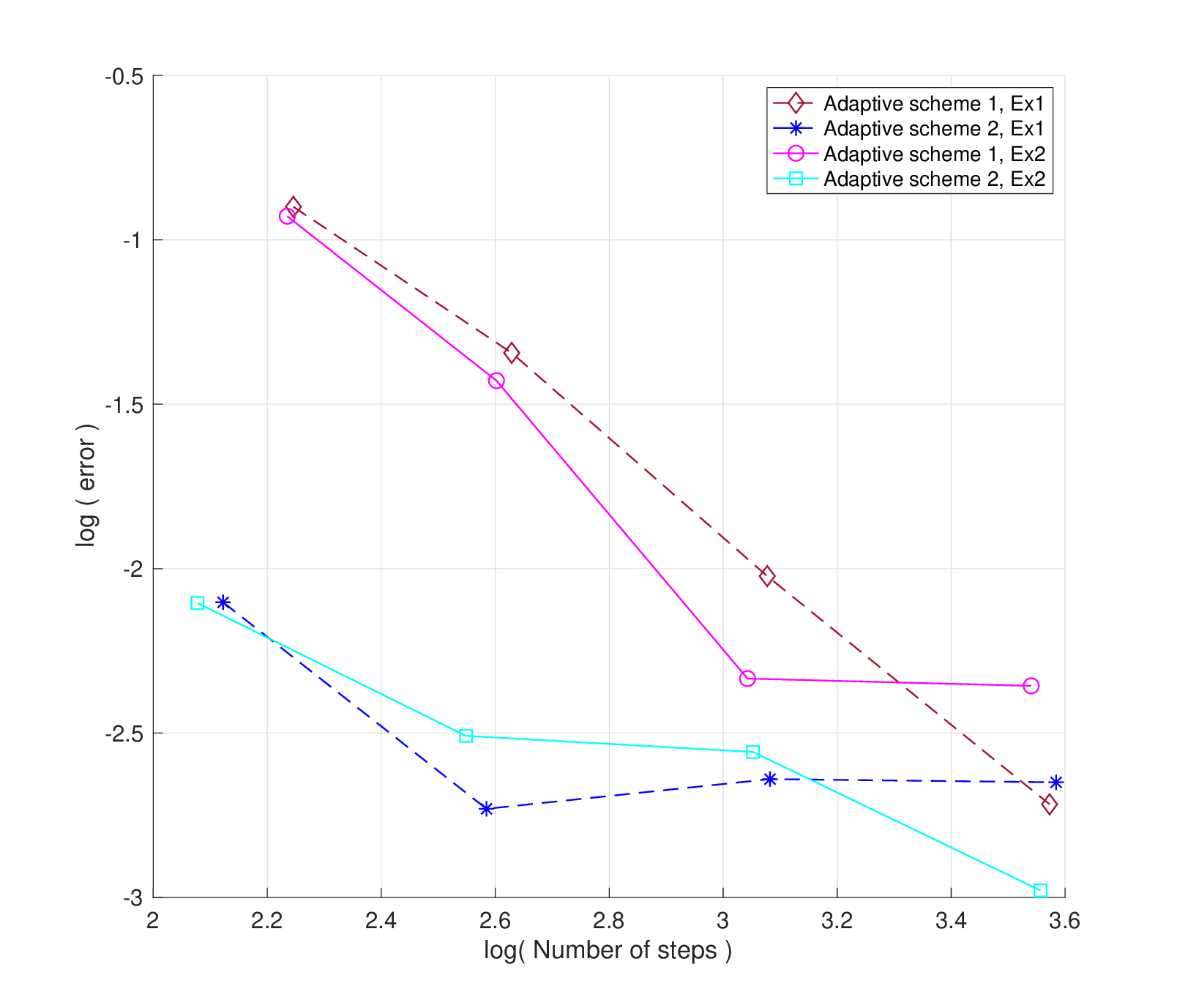}}
\caption{
The base $10$ logarithm of the weak error $ \bar{\epsilon}_1$ as a function of  
the base $10$ logarithm of  the expected value of the number of steps 
in the numerical solution of the stochastic Landau equation  \eqref{eq:Test2}.
Larger number of steps are associated with lower values of the tolerance parameters
$Atol = Rtol =  10^{-2}, 10^{-3}, 10^{-4}, 10^{-5} $.
}
\label{fig:Errores_GL}
\end{figure}

By considering time fluctuations in the bifurcation parameter of the Landau-Stuart ordinary differential equation 
we obtain 
\begin{equation}
\label{eq:Test2}
X_{t}
=
X_0
+
\int_{0}^{t} \left(  \left( a + \sigma^2/2 \right) X_s -   \left( X_s \right) ^3  \right)  ds
 + 
 \int_{0}^{t} \sigma \, X_{s} \, dW^{1}_{s}
 \hspace{1cm}
\text{for all }t \geq 0   ,
\end{equation}
where $X_{t}$ takes values in $ \mathbb{R}$, $a \in \mathbb{R}$ and $\sigma > 0$
(see, e.g., \cite{Arnold1998,Pavliotis2014}).
In addition to a large diffusion coefficient (see, e.g., Section \ref{subsec:LinearScalar}),
we face the difficulties arising from a saturating cubic term.
The stochastic Landau equation \eqref{eq:Test2} has been used to study stochastic bifurcations 
(see, e.g., \cite{Arnold1998,Pavliotis2014})
and to test numerical schemes (see, e.g., \cite{Higham2007,Hutzenthaler2015,Mora2017,Moro2007}).
In this example, $\sigma_1$ has bounded derivatives of any order
 and 
 \[
 x \, b \left( x \right) =  \left( a + \sigma^2/2 \right) x^2 -  x^4
 \qquad \qquad \text{for all } x \in \mathbb{R}.
 \]
 Therefore, the monotone condition \eqref{eq:MonotoneCondition} holds, 
 and so the SDE \eqref{eq:Test2} satisfies  Hypothesis \ref{hyp:SDE}.
 We leave open the problem if   \eqref{eq:Test2} satisfies Hypothesis \ref{hyp:EDP}.
 In this research direction, note that 
 the drift coefficient $b$ is not globally Lipschitz continuous, \eqref{eq:Test2}  is not  hypoelliptic,
 and 
  \eqref{eq:Test2} does not belong to the class of SDEs treated by  \cite{Bossy2021} 
since $\sigma_1 \left( x \right) =  \sigma \, x$.

For any $k = 1, 2, 3$ we calculate $\mathbb{E} \log \left( 1 + \left( X_{T_k} \right)^2 \right)$,
where $T_1 = 0.5$, $T_2 = 1$ and $T_3 = 20$, in the following situations:
\begin{description}

\item[\textbf{Ex1:}]  $ a = -0.1 $, $ \sigma = 2$ and $X_0 = 5$.

\item[\textbf{Ex2:}]  $ a = 0.1 $, $ \sigma = 2$ and $X_0 = 0.1$.

\end{description}
In Example \textbf{Ex1} we have that $a < 0$,
and so 
$ \mathbb{E} \left(  \left\Vert X_t \right\Vert ^p \right) = 0 $
converges exponentially fast to $0$ for some $p \in \left] 0, 1 \right[$
(see, e.g., \cite{Higham2007}).
Hence, 
\[ 
\lim_{t \rightarrow + \infty} \mathbb{E}  \log \left( 1 + \left(X_t \right)^2 \right) = 0
\]
due to
$ \log \left( 1 + x^2 \right) \leq 2 x^p / p$ for any $x \geq 0$.
On the other hand,
in  Example \textbf{Ex2}
the SDE \eqref{eq:Test2} has three invariant forward Markov measures since $a > 0$ 
(see, e.g., \cite{Arnold1998}).
Table \ref{Table:E2.1} presents the reference values for  $\mathbb{E} \log \left( 1 + \left( X_t \right)^2 \right)$,
which have been computed by  the Euler scheme \eqref{eq:Euler-MaruyamaDelta} with $\Delta = 0.0005$,
together with the Sampling Method \ref{sampling:1} with parameters 
$S_{min} = 10^6$,  $ S_{max} = 10^{14}$, $\mathfrak{sfac}_{max} = 120$, $\delta = 0.01$
and $AS_{tol} = RS_{tol} = 0.5 \cdot 10^{-4}$.

Figure \ref{fig:Errores_GL} plots  
the error \eqref{eq:abserror1} as a function of the mean value of the number of steps taken by Adaptive schemes  \ref{AdaptiveEM1} and  \ref{AdaptiveO2_Extrap} to arrive at $T = 20$  in the numerical solution of the examples \textbf{Ex1} and \textbf{Ex2}.
As in previous numerical experiments we take  
$Atol = Rtol =  10^{-2}, 10^{-3}, 10^{-4}, 10^{-5} $,
and
$\Delta_{min} = 2 \, eps$, $\Delta_{max} = 1 / 2$,  $\mathfrak{fac}_{max} = 20$.
We also apply  Sampling Method \ref{sampling:1}
with  
$\delta = 0.01$, $AS_{tol} = RS_{tol} = 10^{-4}$, $S_{min} = 10^5$,  $ S_{max} = 10^9$ 
and $\mathfrak{sfac}_{max} = 120$.
Figure \ref{fig:Errores_GL}  shows the good performance of 
Adaptive schemes  \ref{AdaptiveEM1} and  \ref{AdaptiveO2_Extrap}
that accurately solve  \eqref{eq:Test2}, particularly Adaptive scheme \ref{AdaptiveO2_Extrap}.
The Euler scheme  \eqref{eq:Euler-Maruyama} and the second weak  order scheme \eqref{eq:SecondOrderScheme}
with constant step-sizes present an  unstable behavior (see, e.g., \cite{Hutzenthaler2011} for a theoretical study).
In the examples \textbf{Ex1} and \textbf{Ex2}, 
the  Euler scheme  \eqref{eq:Euler-Maruyama} (resp. the second weak  order scheme \eqref{eq:SecondOrderScheme})
grow to $+ \infty$ when its step-size is equal to $20$ divided by 
the mean value of the number of steps taken by Adaptive scheme  \ref{AdaptiveEM1}
(resp.  Adaptive scheme \ref{AdaptiveO2_Extrap}) with tolerances 
$Atol = Rtol =  10^{-2}, 10^{-3}, 10^{-4}$.
This strongly suggests that the new adaptive strategy improves the dynamical properties of the underlying numerical schemes when a similar number of total recursive steps are used.

Consider the limiting case of $\Delta_{min} = 0$ and $\mathfrak{fac}_{max} = + \infty$.
Then,
in the step 5 of Adaptive scheme \ref{AdaptiveEM1} we have   
$
\tau_{n+1} - \tau_{n} =  \min \left\{ \Delta_{max}, \Delta_{*,n}   \right\} 
$,
where $\Delta_{*,n}$ is given by Definition \ref{def:Delta_*}.
Hence,
$
x \, b \left( x \right)  + \frac{1}{2} \left(  \tau_{n+1} - \tau_{n} \right)  \left\vert  b \left( x \right)    \right\vert^2
$
is asymptotically equivalent to 
$\left( -1 + \sqrt{ RTol/6} \right) x^4 $ as $ \left\vert x \right\vert  \rightarrow + \infty$.
This gives \eqref{eq:CondHyp4} if $RTol < 6$,
and so Hypothesis  \ref{hyp:WeakOrden1} holds in this case.

\subsection{Stochastic Duffing-van der Pol equation}
\label{sub:StochasticDP}

\begin{figure}[tb]
\caption{
Computation of $\mathbb{E} \, Q_{t}$,
where $ t = 0, 1, \ldots,  80 $ and $ Q_{t}$ is given by
\eqref{eq:StochVdpol} with 
$ - \delta  = \beta = 10$, $\alpha = -1$, $\gamma = -0.1$
and 
$\sigma_1 =  \sigma_2 =  \sigma_3 = 0.5$. 
The black solid line represents the reference values,
and the dashed lines describe the estimations of $ \mathbb{E} \, Q_{T_k}  $ obtained by  
Adaptive scheme  \ref{AdaptiveEM1} with tolerance
 $Atol_i = Rtol_i$ equal to $10^{-2} $ (red), $10^{-3}$ (blue), $10^{-4}$ (green).
}

\centering
 \resizebox{12cm}{!}{\includegraphics{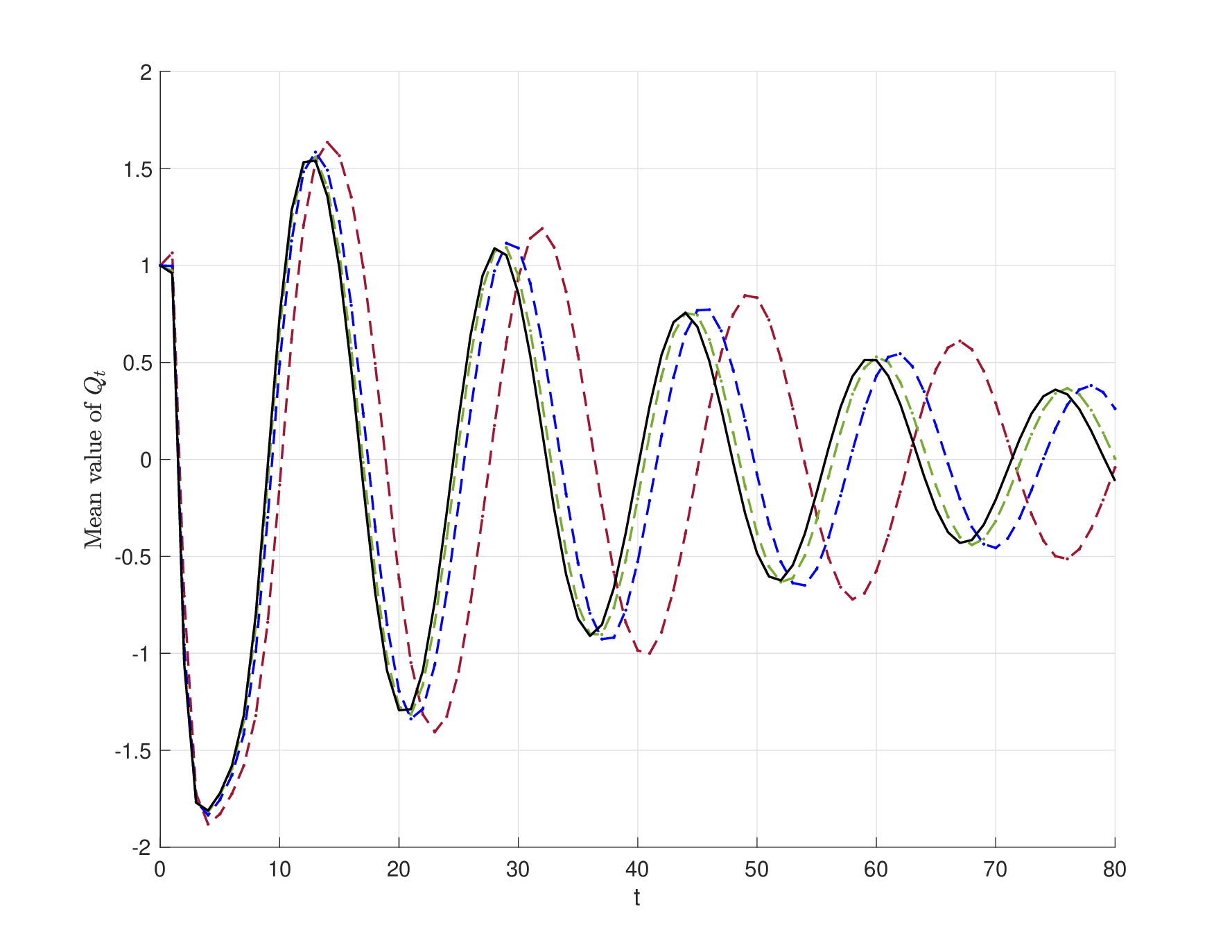}}
\label{fig1:DVDP_Q}
\end{figure}

\begin{figure}[tb]
\centering
 \resizebox{12cm}{!}{\includegraphics{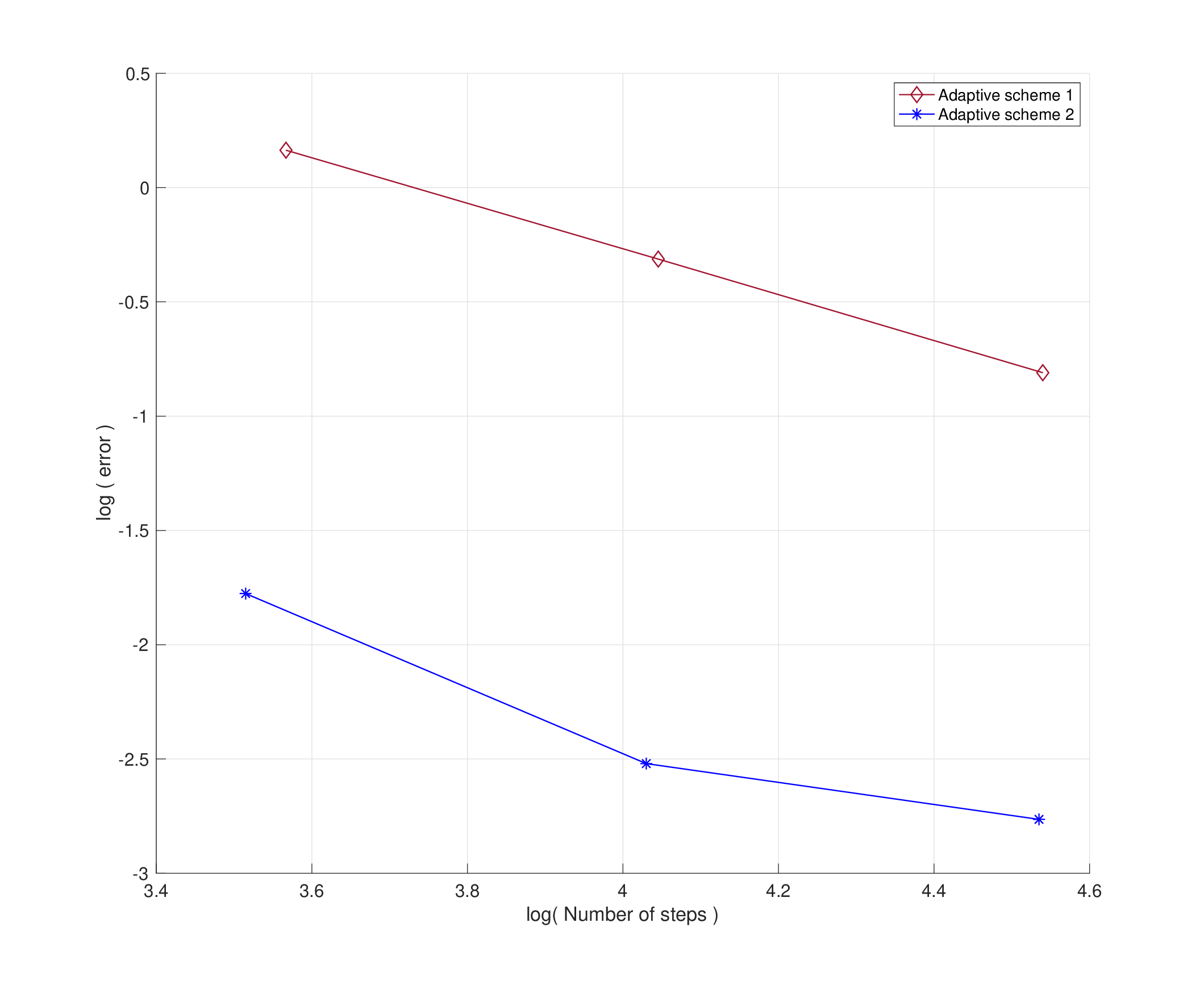}}
\caption{
The base 10 logarithm of the weak error $ \bar{\epsilon}_4$ as a function of  
the base 10 logarithm of  the expected value of the number of steps 
in the solution of the stochastic Duffing-van der Pol equation \eqref{eq:StochVdpol}
by Adaptive schemes  \ref{AdaptiveEM1} and  \ref{AdaptiveO2_Extrap}.
}
\label{fig:Errores_DVP}
\end{figure}

We deal with the following stochastic extension of the  Duffing-van der Pol equation
\begin{equation}
\label{eq:StochVdpol}
 \left\{
\begin{aligned}
 d Q_t 
 & =
 P _t \, dt
 \\
 d P _t
  & =
 \left( \alpha \, Q_t  + \left( \beta + \left( \sigma_2 \right)^2/2  \right) P _t  + \gamma \left( Q_t \right)^3  + \delta \left( Q_t \right)^2 P _t  \right) dt
 +
 \sigma_1 Q_t \, dW^{1}_{t}   +  \sigma_2 P_t \, dW^{2}_{t}  + \sigma_3 \, dW^{3}_{t} 
\end{aligned}
 \right.  ,
\end{equation}
where $t \geq 0$ and $ \alpha$, $ \beta $, $ \gamma $, $ \delta $,  $\sigma_1,  \sigma_2,  \sigma_3 \in \mathbb{R}$.
We take $Q_{0}  =  P_{0} = 1 $.
The non-linear Langevin type-equation \eqref{eq:StochVdpol}  has  already been used  for testing  SDE solvers 
(see, e.g., \cite{delaCruz2017,GilsingShardlow2007,Hutzenthaler2015,Kloeden1992,KupperLehnRossler2007,MardonesMora2019}).
We fix  $\beta = - \delta > 0$.
Thus, 
in case 
$
\gamma = \sigma_1 =  \sigma_2 =  \sigma_3 = 0
$
the SDE \eqref{eq:StochVdpol} reduces to the usual van der Pol oscillator,
a common model problem in the numerical solution of ODEs,
which becomes increasingly stiff as $\beta$ takes larger values.
We choose $ \beta = 10$, $\alpha = -1$, $\gamma = -0.1$
and 
$\sigma_1 =  \sigma_2 =  \sigma_3 = 0.5$.
It has been proven that \eqref{eq:StochVdpol} satisfies Hypothesis \ref{hyp:SDE}
by using the Lyapunov-type function 
$\left( Q, P \right) \mapsto 1 + Q^4 + 2 \, P^2$ 
(see, e.g., \cite{Hutzenthaler2015}).
For any  $x \in \mathbb{R}^2$ the linear span of 
$\sigma_3 \left( x \right) = 
 \sigma_3 
\begin{pmatrix}
 0
 \\
 1
\end{pmatrix}
$
and the Lie bracket 
$
\left[  b \left( x \right) , \sigma_3 \left( x \right) \right]
=
 \sigma_3
\begin{pmatrix}
 1
 \\
 \beta + \left( \sigma_2 \right)^2/2 +  \delta  \left( x_1 \right) ^2
\end{pmatrix}
$
is equal to $\mathbb{R}^2$,
and so \eqref{eq:StochVdpol} satisfies the H\"ormander condition. 
This implies that the Kolmogorov equation \eqref{eq:Kolmogorov} has a unique smooth classical solution
provided that $\varphi \in \mathcal{C}^{5}\left( \mathbb{R}^{d},\mathbb{R}\right)$ is bounded (see, e.g., \cite{Hairer2015,Hormander1967} for details).
We leave open the problem of checking the fulfillment of Hypothesis \ref{hyp:EDP} 
in case $\varphi $ is an unbounded smooth function.

We compute $ \mathbb{E} \, Q_{T_k}  $ for $T_k = k$ with $k = 0, 1, \ldots,  80 $.
In Figure \ref{fig1:DVDP_Q},
the solid line interpolates the reference values for $ \mathbb{E} \, Q_{T_k}  $
calculated by the Euler scheme with constant step-size
$ 10^{-5}$ and sample size $3.2 \cdot 10^{7}$.
The estimated value of $\mathrm{E}_S$ given by \eqref{eq:Es} with $\varphi(q,p) = q$,
but taking the maximum over $k = 1, \ldots,  80 $,
is $0.4735 \cdot 10^{-3}$.
Hence,
we would expect a precision of approximately $0.001$ in the computation of the reference values for $ \mathbb{E} \, Q_{T_k}  $.

We compute  $ \mathbb{E} \, Q_{T_k}  $ by  Adaptive schemes  \ref{AdaptiveEM1} and  \ref{AdaptiveO2_Extrap}
with tolerances $Atol_i = Rtol_i = 10^{-2}, 10^{-3}, 10^{-4}$.
Adaptive schemes \ref{AdaptiveEM1} and \ref{AdaptiveO2_Extrap} are sampled $10^7$ times;
we actually use Sampling Method \ref{sampling:1} with  $ S_{max} = 10^{7}$,
together with $\delta = 0.01$, $ AS_{tol} = RS_{tol} = 10^{-4}$,
$S_{min} = 10^6$,  $ S_{max} = 10^{7}$ and $\mathfrak{sfac}_{max} = 2$.
In all cases, the estimated value of $\mathrm{E}_S$ is around of $0.84  \cdot 10^{-3} $.
In Figure \ref{fig1:DVDP_Q},
the dashed lines  represent the estimations of $ \mathbb{E} \, Q_{T_k}  $ 
obtained by  Adaptive scheme  \ref{AdaptiveEM1}.
 Figure \ref{fig:Errores_DVP} presents estimations of the error 
\begin{equation}
\label{eq:errorDVP}
 \bar{\epsilon}_4 \left( Y \right)
=
\max_{k=1,2,\ldots,80}
\left\vert
\mathbb{E} \, Y^1_{n\left( k \right)}   - \mathbb{E} \, X^1_{T_k}  
\right\vert 
\end{equation}
as a function of  the mean values of the number of steps used in the computations
of $\mathbb{E} \, Q _{T_k}$,
 where $n\left( k \right)$ is given by \eqref{eq:def_nk},
and $Y^1$ stands for the first coordinate of  Adaptive schemes  \ref{AdaptiveEM1} and  \ref{AdaptiveO2_Extrap}.
In  Figures \ref{fig1:DVDP_Q} and \ref{fig:Errores_DVP},
better approximations are associated with lower values of $Atol_i = Rtol_i$.

 According to Figures \ref{fig1:DVDP_Q} and \ref{fig:Errores_DVP} 
we have that Adaptive scheme  \ref{AdaptiveEM1} 
gets closer and closer to 
the oscillations of $ \mathbb{E} \, Q_{T_k}  $ as the tolerance parameters $Atol_i=Rtol_i$ decrease.
In  Figure \ref{fig:Errores_DVP} highlights the very good accuracy of Adaptive scheme \ref{AdaptiveO2_Extrap}
with a low  number of integration steps.
For example,  
Adaptive scheme \ref{AdaptiveO2_Extrap} achieves the error of $0.0033$
with the same computational cost of  the second order method \eqref{eq:SecondOrderScheme}
with uniform step-size $0.0075 = 80/10715.7$  ($\log_{10} \left(10715.7 \right) =  3.5668$).
In contrast,
if we set, for instance, the step-size to the constant value $0.03$ --a bit larger than $80/3687.9$ ( $\log_{10} \left(3687.9 \right) =  4.03$)--
(resp. $0.04$), 
then the amplitude of the Monte-Carlo estimations of $ \mathbb{E} \, Q_{T_k}  $ given by 
Euler scheme (resp. the second order method \eqref{eq:SecondOrderScheme})
quickly take values close to  $+ \infty$, and provide Not-a-Number as output for the last $ \mathbb{E} \, Q_{T_k}  $. 
Thus, 
similar to Sections \ref{subsec:LinearScalar}-\ref{sub:Stochastic_Landau},
the new algorithms adjust appropriately the  step-size of the numerical scheme
to approximate the law of the solution of \eqref{eq:StochVdpol},
avoiding numerical  instabilities.


%

\subsection{Summary of the experimental results }
Numerical experiments show that the new adaptive schemes greatly overcome the accuracy of the Euler and second order Taylor schemes with fixed step-size in the integration of four test equations. 
The new adaptive strategy reduces appropriately the step-sizes of the schemes as the tolerances become smaller,
and  it improves the stability of the Euler and second weak order schemes
when a similar number of total recursive steps are used. 
In the examples where we consider the stochastic strategy given by  \cite{Szepessy2001},
the new  adaptive strategy achieves a much better performance than 
the above adaptive method based on global errors.

\section{Proofs}

\subsection{Proof of Theorem \ref{theo:LocalExpansion}} 
\label{subsec:Proofth:theo:LocalExpansion}

\begin{proof}
Take 
$
N = \sup \left\{ \mathcal{N} \left( \omega \right) :  \omega \in \Omega  \right\}
$.
Thus,
$\tau_{N} = T$ and $Y_{N} = Y_{\mathcal{N}}$.
Let  $u$ be the function described in Hypothesis \ref{hyp:EDP}.
Then,
$ \mathbb{E} \varphi \left(  Y_{N}\right) = \mathbb{E} u  \left(  T, Y_{N}\right) $,
and so
\[
\mathbb{E} \varphi \left(  Y_{N} \right)
=
\mathbb{E} u  \left(  0 , Y_{0}\right)
+
\sum_{n=0}^{N-1} \left( 
\mathbb{E}  u  \left(  \tau_{n+1}, Y_{n+1}\right)  -  \mathbb{E}  u  \left(  \tau_{n}, Y_{n}\right) 
\right) 
.
\]
Using
$\partial_t u  = -\mathcal{L}\left( u \right) $
and 
$
u \in  \mathcal{C}_{P}^{ 5 } \left( \left[ 0, T \right] \times \mathbb{R}^d,\mathbb{R}\right)
$
we get
$\partial_t u  \in \mathcal{C}_{P}^{3} \left( \left[ 0, T \right] \times \mathbb{R}^d,\mathbb{R}\right)$ 
and  
$\partial_{t t} u  \in   \mathcal{C}_{P}^{ 1 } \left( \left[ 0, T \right] \times \mathbb{R}^d,\mathbb{R}\right) $.
According to Taylor's theorem we have 
\begin{align*}
& u  \left(  \tau_{n+1}, Y_{n+1}\right)  
   =  
u  \left(  \tau_{n}, Y_{n}\right) 
+
 \partial_t u \left(  \tau_n ,  Y_{n}  \right)  \left( \tau_{n+1} - \tau_{n} \right)   
+
\sum_{ \left\vert \alpha \right\vert = 1 }  
 \partial_x^{ \alpha} u \left(  \tau_n ,  Y_{n}  \right) \left(   Y_{n+1} -   Y_{n} \right)^{\alpha}  
  \\
& \qquad
+
 \left( \tau_{n+1} - \tau_{n} \right)  ^2
 \int_{0}^{1} \int_{0}^{r} \partial_{t  t } u \circ \lambda \left(  s \right) ds \, dr 
 +
 2
 \left( \tau_{n+1} - \tau_{n} \right)  
 \sum_{ \left\vert \alpha \right\vert = 1 }   \left(   Y_{n+1} -   Y_{n} \right)^{\alpha} 
 \int_{0}^{1} \int_{0}^{r}   \partial_x^{ \alpha} \partial_{t } u \circ \lambda \left(  s \right)   ds \, dr 
 \\
& \qquad 
 + 
 \sum_{ \left\vert \alpha \right\vert = 2 }   \frac{ 2 }{ \alpha !} 
 \int_{0}^{1} \int_{0}^{r}   \partial_x^{ \alpha}  u \circ \lambda \left(  s \right)   ds \, dr  \left(   Y_{n+1} -   Y_{n} \right)^{\alpha}  ,
\end{align*}
with 
$
\lambda \left(  s \right)
=
\left(  \tau_n ,  Y_{n}  \right)  + s \left(  \tau_{n+1} - \tau_n ,  Y_{n+1}  - Y_{n}  \right) 
$.
Applying the fundamental theorem of calculus to
$\partial_x^{ \alpha} \partial_{t } u \circ \lambda \left(  s \right) $ and $\partial_x^{ \alpha}  u \circ \lambda \left(  s \right)$
we get
\begin{equation*}
 u  \left(  \tau_{n+1}, Y_{n+1}\right)  
=  
u  \left(  \tau_{n}, Y_{n}\right) 
+
 \partial_t u \left(  \tau_n ,  Y_{n}  \right)  \left( \tau_{n+1} - \tau_{n} \right)  
+
\sum_{ \left\vert \alpha \right\vert = 1, 2 }  \frac{1}{ \alpha !} \partial_x^{ \alpha} u \left(  \tau_n ,  Y_{n}  \right) \left(   Y_{n+1} -   Y_{n} \right)^{\alpha} 
+
R^Y_{n+1},
\end{equation*}
where 
\begin{align*}
 R^Y_{n+1} =
 & 
 \left( \tau_{n+1} - \tau_{n} \right)  ^2
 \int_{0}^{1} \int_{0}^{r} \partial_{t  t } u \circ \lambda \left(  s \right) ds \, dr 
 +
 \left( \tau_{n+1} - \tau_{n} \right)   
 \sum_{ \left\vert \alpha \right\vert = 1 }  \partial_x^{ \alpha} \partial_t u \left(  \tau_n ,  Y_{n}  \right) \left(   Y_{n+1} -   Y_{n} \right)^{\alpha} 
  \\
 & \quad
 +
  2 \left( \tau_{n+1} - \tau_{n} \right)  ^2
 \sum_{ \left\vert \alpha \right\vert = 1 }   \left(   Y_{n+1} -   Y_{n} \right)^{\alpha} 
 \int_{0}^{1} \int_{0}^{r}  \int_{0}^{u} \partial_x^{ \alpha} \partial_{t t} u \circ \lambda \left(  s \right)   ds \, du \, dr 
 \\
  & \quad + 
 \left( \tau_{n+1} - \tau_{n} \right)   
  \sum_{ \left\vert \alpha \right\vert = 2 } \frac{6}{ \alpha !}   \left(   Y_{n+1} -   Y_{n} \right)^{\alpha} 
  \int_{0}^{1} \int_{0}^{r}  \int_{0}^{u} \partial_x^{ \alpha} \partial_t u \circ \lambda \left(  s \right)   ds \, du \, dr  
   \\
 & \quad
 +
  \sum_{ \left\vert \alpha \right\vert = 3 }  \frac{6}{ \alpha !} \left(   Y_{n+1} -   Y_{n} \right)^{\alpha} 
  \int_{0}^{1} \int_{0}^{r}  \int_{0}^{u} \partial_x^{ \alpha}  u \circ \lambda \left(  s \right)   ds \, du \, dr  .
\end{align*}
Therefore,
\begin{equation}
 \label{eq:LD_SchemeN}
 \mathbb{E} \varphi \left(  Y_{N}\right) 
 =
 \mathbb{E} u \left(  0 ,  Y_{0}  \right)
 + 
\sum _{n=0}^{N-1} \mathbb{E}  \left(  
\mathcal{T}_n \left(  Y_{n+1} \right) + \mathbb{E} \left(  R^Y_{n+1}  \diagup  \mathfrak{F}_{\tau_n} \right)
\right) .
 \end{equation}

Combining  $\partial_t u  = -\mathcal{L}\left( u \right) $
with the application of the fundamental theorem of calculus to $\partial_x^{ \alpha}  u \circ \lambda $
we deduce 
\begin{align*}
\mathbb{E} \left( R^Y_{n+1}  \diagup \mathfrak{F}_{\tau_{n}} \right) =
&
-
 \left( \tau_{n+1} - \tau_{n} \right)  ^2
 \int_{0}^{1}  \int_{0}^{r} 
 \mathbb{E} \left( \partial_{t  } \mathcal{L}\left( u \right)  \circ \lambda \left(  s \right) 
 \diagup \mathfrak{F}_{\tau_{n}} \right) 
 ds \, dr 
 \\
 & 
  -
 \left( \tau_{n+1} - \tau_{n} \right)   
 \sum_{ \left\vert \alpha \right\vert = 1 }  \partial_x^{ \alpha} \mathcal{L}\left( u \right) \left(  \tau_n ,  Y_{n}  \right) 
  \mathbb{E} \left( 
 \left(   Y_{n+1} -   Y_{n} \right)^{\alpha} 
 \diagup \mathfrak{F}_{\tau_{n}} \right) 
 \\
 & 
 -
 \left( \tau_{n+1} - \tau_{n} \right)  ^2
 \sum_{ \left\vert \alpha \right\vert = 1 } \frac{2}{ \alpha !}  
 \int_{0}^{1} \! \int_{0}^{r} \! \int_{0}^{u} 
 \mathbb{E} \left( 
  \left(   Y_{n+1} -   Y_{n} \right)^{\alpha} 
 \partial_x^{ \alpha} \partial_{t } \mathcal{L}\left( u \right)  \circ \lambda \left(  s \right)   
 \diagup \mathfrak{F}_{\tau_{n}} \right) 
 ds \, du \, dr 
 \\
  & 
   -
 \left( \tau_{n+1} - \tau_{n} \right)   
  \sum_{ \left\vert \alpha \right\vert = 2 } \frac{6}{ \alpha !}  
  \int_{0}^{1} \int_{0}^{r}  \int_{0}^{u} 
   \mathbb{E} \left( 
   \left(   Y_{n+1} -   Y_{n} \right)^{\alpha} 
  \partial_x^{ \alpha} \mathcal{L}\left( u \right)  \circ \lambda \left(  s \right)  
  \diagup \mathfrak{F}_{\tau_{n}} \right) 
   ds \, du \, dr  
  \\
 & 
+
 \sum_{ \left\vert \alpha \right\vert = 3 }  \frac{1}{ \alpha !} \partial_x^{ \alpha} u \left(  \tau_n ,  Y_{n}  \right) 
 \mathbb{E} \left(  \left(   Y_{n+1} -   Y_{n} \right)^{\alpha} \diagup \mathfrak{F}_{\tau_{n}} \right)
  \\
 & 
 -
 \left( \tau_{n+1} - \tau_{n} \right)   
  \sum_{ \left\vert \alpha \right\vert = 3 } \frac{6}{ \alpha !}   
  \int_{0}^{1}  \int_{0}^{r}  \int_{0}^{u}  \int_{0}^{v} 
   \mathbb{E} \left( 
  \left(   Y_{n+1} -   Y_{n} \right)^{\alpha} 
  \partial_x^{ \alpha} \mathcal{L}\left( u \right) \circ \lambda \left(  s \right)   
  \diagup \mathfrak{F}_{\tau_{n}} \right) 
  ds \, dv \, du \, dr
  \\
& 
+
  \sum_{ \left\vert \alpha \right\vert = 4 } \frac{ 24}{ \alpha !}  
  \int_{0}^{1} \int_{0}^{r}  \int_{0}^{u} \int_{0}^{v} 
   \mathbb{E} \left( 
   \left(   Y_{n+1} -   Y_{n} \right)^{\alpha} 
  \partial_x^{ \alpha}  u \circ \lambda \left(  s \right)   
  \diagup \mathfrak{F}_{\tau_{n}} \right) 
  ds \, dv  \, du \, dr .
\end{align*}

Since
\begin{align*}
\left\vert
  \mathbb{E} \left(   Y^j_{n+1} -   Y^j_{n}  \diagup \mathfrak{F}_{\tau_{n}} \right) 
\right\vert
&
\leq
\left\vert
  \mathbb{E} \left(    Y^j_{n+1} -   Y^j_{n}  \diagup \mathfrak{F}_{\tau_{n}} \right) 
  -
  \mathbb{E} \left( Z^j _{n+1}\left(  Y_{n} \right) -   Y^j_{n}  \diagup \mathfrak{F}_{\tau_{n}} \right) 
\right\vert
+
\left\vert
 \mathbb{E} \left( \  Z^j _{n+1}\left(  Y_{n} \right) -   Y^j_{n}  \diagup \mathfrak{F}_{\tau_{n}} \right) 
\right\vert ,
\end{align*}
using Hypothesis \ref{hyp:WeakOrden1} (c) we obtain
\[
\left\vert
  \mathbb{E} \left(   Y^j_{n+1} -   Y^j_{n}  \diagup \mathfrak{F}_{\tau_{n}} \right) 
\right\vert
\leq
K \left( 1+\left\Vert  Y_n \right\Vert ^{q}\right)  \left(\tau_{n+1} - \tau_n \right)^2
+
\left\vert  b^j \left(  \tau_{n} ,  Y_n   \right) \right\vert \left(\tau_{n+1} - \tau_n \right)  .
\]
This gives
\[
\left\vert 
 \sum_{ \left\vert \alpha \right\vert = 1 }  \partial_x^{ \alpha} \mathcal{L}\left( u \right) \left(  \tau_n ,  Y_{n}  \right) 
  \mathbb{E} \left( 
 \left(   Y_{n+1} -   Y_{n} \right)^{\alpha} 
 \diagup \mathfrak{F}_{\tau_{n}} \right) 
\right\vert
\leq
K \left( 1+\left\Vert  Y_n \right\Vert ^{q}\right)  \left(\tau_{n+1} - \tau_n \right) .
\]
Similarly,
combining Hypothesis \ref{hyp:WeakOrden1} (c) with
\begin{align*}
&
 \mathbb{E} \left( 
 \left( Z^{j_1} _{n+1}\left(  Y_{n} \right) -   Y^{j_1}_{n} \right) 
 \left( Z^{j_2} _{n+1}\left(  Y_{n} \right) -   Y^{j_2}_{n} \right)
 \left( Z^{j_3} _{n+1}\left(  Y_{n} \right) -   Y^{j_3}_{n} \right) 
 \diagup \mathfrak{F}_{\tau_{n}} \right)
 \\
 & =
 b^{j_1}\left(  \tau_{n} , Y_{n}  \right) b^{j_2}\left(  \tau_{n} , Y_{n}  \right) b^{j_3}\left(  \tau_{n} , Y_{n}  \right)
  \left( \tau_{n+1} - \tau_{n} \right)^3
 +
 \left( \tau_{n+1} - \tau_{n} \right)^2 
 b^{j_1}\left(  \tau_{n} , Y_{n}  \right) \sum_{k=1}^m \sigma^{j_2}_k \left(  \tau_{n}, Y_{n} \right) \sigma^{j_3}_k \left(  \tau_{n}, Y_{n} \right)
\\
 & \quad 
 +
 \left( \tau_{n+1} - \tau_{n} \right)^2 
 \left(
 b^{j_2}\left(  \tau_{n} , Y_{n}  \right)  \sum_{k=1}^m \sigma^{j_1}_k \left(  \tau_{n}, Y_{n} \right) \sigma^{j_3}_k \left(  \tau_{n}, Y_{n} \right)
 +
 b^{j_3}\left(  \tau_{n} , Y_{n}  \right)  \sum_{k=1}^m \sigma^{j_1}_k \left(  \tau_{n}, Y_{n} \right) \sigma^{j_2}_k \left(  \tau_{n}, Y_{n} \right)
 \right)
 \end{align*}
yields
\[
\left\vert 
\sum_{ \left\vert \alpha \right\vert = 3 }  \frac{1}{ \alpha !} \partial_x^{ \alpha} u \left(  \tau_n ,  Y_{n}  \right) 
 \mathbb{E} \left(  \left(   Y_{n+1} -   Y_{n} \right)^{\alpha} \diagup \mathfrak{F}_{\tau_{n}} \right)
\right\vert
\leq
K \left( 1+\left\Vert  Y_n \right\Vert ^{q}\right)  \left(\tau_{n+1} - \tau_n \right)^2 .
\]
Using Hypothesis \ref{hyp:WeakOrden1} (b), together with the Cauchy-Bunyakovsky-Schwarz inequality,
we bound from above the absolute values of  the remaining terms of 
$\mathbb{E} \left( R^Y_{n+1}  \diagup \mathfrak{F}_{\tau_{n}} \right)$
to obtain 
\[
 \left\vert  \mathbb{E} \left( R^Y_{n+1}  \diagup \mathfrak{F}_{\tau_{n}} \right) \right\vert 
\leq 
 K  \left( 1 +  \left\Vert  Y_n \right\Vert^{q }  \right) \left( \tau_{n+1} - \tau_{n} \right)^2 
.
\]

Applying It\^o's formula we get
$
u  \left(  0 , X_{0} \right) 
= 
\mathbb{E}  \left(  u  \left( T ,  X_{T}  \right)  \diagup \mathfrak{F}_{\tau_{0}} \right)
$
and
\[
u  \left(  \tau_n , Y_{n} \right) 
= 
\mathbb{E}  \left(  u  \left(  \tau_{n+1} ,  X_{\tau_{n+1}} \left(  \tau_n , Y_{n}  \right) \right)   \diagup \mathfrak{F}_{\tau_{n}} \right)
\]
(see, e.g., proof of Theorem 5.7.6 of \cite{KaratzasShreve1998} 
or proof of Theorem 7.14 of  \cite{GrahamTalay2013}).
Therefore,
\begin{equation*}
\mathbb{E} \varphi \left(  X_T \right)
  =
\mathbb{E} u  \left(  0 , X_{0}\right)
 =
\mathbb{E} u  \left(  0 , X_{0}\right)
+
\sum_{n=0}^{N-1} \left( 
\mathbb{E}  u  \left(  \tau_{n+1} ,  X_{\tau_{n+1}} \left(  \tau_n , Y_{n}  \right) \right) 
 -  \mathbb{E}  u  \left(  \tau_{n}, Y_{n}\right) 
\right) .
\end{equation*}
As in the proof of \eqref{eq:LD_SchemeN},
using the fundamental theorem of calculus and Taylor's theorem
we obtain 
\begin{equation}
 \label{eq:LDN}
  \mathbb{E} \varphi \left( X_{T} \right) 
 =
 \mathbb{E} u \left(  0 , X_{0}  \right)
 + 
\sum_{n=0}^{N-1}  \mathbb{E}  \left(
\mathcal{T}_n \left(  X_{\tau_{n+1}} \left( \tau_{n} ,  Y_{n}  \right)  \right) +  \mathbb{E} \left(  R^X_{n+1}  \diagup  \mathfrak{F}_{\tau_n} \right) 
\right)
\end{equation}
with $ R^X_{n+1} $ equal to
\begin{align*}
&
-
 \left( \tau_{n+1} - \tau_{n} \right)  ^2
 \int_{0}^{1}  \int_{0}^{r} 
 \partial_{t  } \mathcal{L}\left( u \right)  \circ \lambda^X \left(  s \right) 
 ds \, dr 
  -
 \left( \tau_{n+1} - \tau_{n} \right)   
 \sum_{ \left\vert \alpha \right\vert = 1 }  \partial_x^{ \alpha} \mathcal{L}\left( u \right) \left(  \tau_n ,  Y_{n}  \right) 
 \left(  X_{\tau_{n+1}} \left(  \tau_n , Y_{n}  \right)  -   Y_{n} \right)^{\alpha}  
 \\
 & 
 -
 \left( \tau_{n+1} - \tau_{n} \right)  ^2
 \sum_{ \left\vert \alpha \right\vert = 1 } \frac{2}{ \alpha !}  
 \int_{0}^{1} \int_{0}^{r}  \int_{0}^{u} 
  \left(   X_{\tau_{n+1}} \left(  \tau_n , Y_{n}  \right)  -   Y_{n} \right)^{\alpha} 
 \partial_x^{ \alpha} \partial_{t } \mathcal{L}\left( u \right)  \circ \lambda^X \left(  s \right)   
 ds \, du \, dr 
 \\
  & 
   -
 \left( \tau_{n+1} - \tau_{n} \right)   
  \sum_{ \left\vert \alpha \right\vert = 2 } \frac{6}{ \alpha !}  
  \int_{0}^{1} \int_{0}^{r}  \int_{0}^{u} 
   \left(   X_{\tau_{n+1}} \left(  \tau_n , Y_{n}  \right)  -   Y_{n} \right)^{\alpha} 
  \partial_x^{ \alpha} \mathcal{L}\left( u \right)  \circ \lambda^X \left(  s \right)  
   ds \, du \, dr  
  \\
 & 
+
 \sum_{ \left\vert \alpha \right\vert = 3 }  \frac{1}{ \alpha !} \partial_x^{ \alpha} u \left(  \tau_n ,  Y_{n}  \right) 
 \left(   X_{\tau_{n+1}} \left(  \tau_n , Y_{n}  \right)  -   Y_{n} \right)^{\alpha}
  \\
 & 
 -
 \left( \tau_{n+1} - \tau_{n} \right)   
  \sum_{ \left\vert \alpha \right\vert = 3 } \frac{6}{ \alpha !}   
  \int_{0}^{1} \int_{0}^{r}   \int_{0}^{u}  \int_{0}^{v} \hspace{-2pt}
  \left(   X_{\tau_{n+1}} \left(  \tau_n , Y_{n}  \right)  -   Y_{n} \right)^{\alpha} 
  \partial_x^{ \alpha} \mathcal{L}\left( u \right) \hspace{-1pt} \circ \hspace{-1pt} \lambda^X \left(  s \right)   
  ds \, dv  \,  du   \, dr
  \\
& 
+
  \sum_{ \left\vert \alpha \right\vert = 4 } \frac{ 24}{ \alpha !}  
  \int_{0}^{1} \int_{0}^{r}  \int_{0}^{u} \int_{0}^{v} 
   \left(   X_{\tau_{n+1}} \left(  \tau_n , Y_{n}  \right)  -   Y_{n} \right)^{\alpha} 
  \partial_x^{ \alpha}  u \circ \lambda^X \left(  s \right)   
  ds \, dv  \, du \, dr ,
\end{align*}
where
$
\lambda^X \left(  s \right)
=
\left(  \tau_n ,  Y_{n}  \right)  + s \left(  \tau_{n+1} - \tau_n ,   X_{\tau_{n+1}} \left(  \tau_n , Y_{n}  \right)  - Y_{n}  \right) 
$.
Now,
applying basic properties of the solution of \eqref{eq:2.1}, together with \eqref{eq:0.1},  we get
$
 \left\vert  \mathbb{E} \left( R^X_{n+1}  \diagup \mathfrak{F}_{\tau_{n}} \right) \right\vert 
\leq 
 K  \left( 1 +  \left\Vert  Y_n \right\Vert^{q }  \right) \left( \tau_{n+1} - \tau_{n} \right)^2 
$.

Since $Y_{n+1}  \left( \omega \right) - Y_n  \left( \omega \right) = 0$ whenever $\tau_{n}  \left( \omega \right) = T$,
\[
\mathbb{E} \left(  \left(  Y_{n+1}  -  Y_{n} \right)^{\alpha}  \diagup \mathfrak{F}_{\tau_n} \right)
=
\mathbb{E} \left(  \left(  Y_{n+1}  -  Y_{n} \right)^{\alpha}  \diagup \mathfrak{F}_{\tau_n} \right)
\mathbf{1}_{\tau_n < T} .
\]
Moreover,
 $\tau_{n+1} \left( \omega \right) - \tau_{n}  \left( \omega \right) = 0$  
 in case $\tau_{n}  \left( \omega \right) = T$.
 Then,
 using  \eqref{eq:LD_SchemeN} and \eqref{eq:LDN} we obtain  \eqref{eq:LD_Scheme} and \eqref{eq:LD}.
\end{proof}

\subsection{Proof of Theorem \ref{theo:WeakConvergence}} 
\label{subsec:Proofth:theo:WeakConvergence}
\begin{proof}
Applying Theorem \ref{theo:LocalExpansion} gives
\begin{align*}
 \left\vert 
  \mathbb{E} \varphi \left( X_{T} \right) 
  -
  \mathbb{E} \varphi \left(  Y_{\mathcal{N}}\right)
 \right\vert
 &
 \leq
 \left\vert 
 \mathbb{E} u \left(  0 , X_{0}  \right)
 -
  \mathbb{E} u \left(  0 ,  Y_{0}  \right)
 \right\vert
 \\
 & \quad
 +
  K\left( T \right) \, \mathbb{E} \sum _{n=0}^{  \mathcal{N} -1}
  \left(
 \left\vert 
 \mathcal{T}_n \left(  X_{\tau_{n+1}} \left( \tau_{n} ,  Y_{n}  \right)  \right)
 -\mathcal{T}_n \left(  Y_{n+1} \right)
\right\vert
 +
  \left( 1 +  \left\Vert  Y_n \right\Vert^{q }  \right) \left( \tau_{n+1} - \tau_{n} \right)^2
  \right).
 \end{align*}
 Using the condition (c) of Hypothesis \ref{hyp:WeakOrden1}, together with 
 $u \in  \mathcal{C}_{P}^{2} \left( \left[ 0, T \right] \times \mathbb{R}^d,\mathbb{R}\right)$,
 we obtain
\[
 \left\vert 
  \mathbb{E} \varphi \left( X_{T} \right) 
  -
  \mathbb{E} \varphi \left(  Y_{\mathcal{N}}\right)
 \right\vert
  \leq
 \left\vert 
 \mathbb{E} u \left(  0 , X_{0}  \right)
 -
  \mathbb{E} u \left(  0 ,  Y_{0}  \right)
 \right\vert
 +
 K\left( T \right) \, \mathbb{E} \sum _{n=0}^{  \mathcal{N} -1}
  \left( 1 +  \left\Vert  Y_n \right\Vert^{q }  \right) \left( \tau_{n+1} - \tau_{n} \right)^2 ,
 \]
 and so
 \begin{align*}
 \left\vert 
  \mathbb{E} \varphi \left( X_{T} \right) 
  -
  \mathbb{E} \varphi \left(  Y_{\mathcal{N}}\right)
 \right\vert
 & \leq
 \left\vert 
 \mathbb{E} u \left(  0 , X_{0}  \right)
 -
  \mathbb{E} u \left(  0 ,  Y_{0}  \right)
 \right\vert
 \\
 & \quad
 +
 K\left( T \right) 
\sup_{k \geq 0, \,  \omega \in \Omega  } \left\{ \tau_{k+1}   \left( \omega \right) -   \tau_{k}   \left( \omega \right) \right\}
\, \mathbb{E} \int_{0}^T  \left( 1 +  \left\Vert  Y_{n\left( s \right)} \right\Vert^{q }  \right) ds ,
 \end{align*}
 where
 $
 n\left( s \right) 
 =
\max \left\{ n \in \mathbb{N}: \tau_{n} \leq s \right\} 
$.
Since  $u \in  \mathcal{C}_{P}^{4} \left( \left[ 0, T \right] \times \mathbb{R}^d,\mathbb{R}\right)$,
we get the theorem from condition (a) and (d) of Hypothesis \ref{hyp:WeakOrden1}.
\end{proof}

\subsection{Proof of Lemma \ref{le:Momentos}} 
\label{subsec:Proofth:le:Momentos}

\begin{proof}
From the exact values of the expectation of the random variables that approximate 
the products of iterated It\^o integrals (see, e.g., \cite{Kloeden1992,Milstein2004}) 
it follows that
\[
\mathbb{E} \left(  \hat{Y}_{n} \left( \tau_n + \Delta \right)  \diagup \mathfrak{F}_{\tau_n} \right) 
=
Y_{n} 
+  b \left( \tau_n ,  Y_{n} \right) \Delta
+  \frac{1}{2} \mathcal{L}_0   b \, \left( \tau_n ,  Y_{n} \right) \Delta^2
\]
and
\begin{align*}
& 
\mathbb{E} \left(  \left(  \hat{Y}_{n}^i \left( \tau_n + \Delta \right)  -   Y^i_{n} \right)  
\left( \hat{Y}^j_{n} \left( \tau_n + \Delta \right)  -   Y^{j}_{n} \right) \diagup \mathfrak{F}_{\tau_n} \right)
\\
& =
b^i  b^j \Delta^2 
+
\frac{1}{2} b^i   \mathcal{L}_0   b^j   \Delta^3
+
\frac{1}{2} b^j   \mathcal{L}_0   b^i   \Delta^3
+
\frac{1}{4} \mathcal{L}_0   b^i   \cdot \mathcal{L}_0   b^j \Delta^4
+
\frac{1}{2}  \Delta^2  \sum_{k, \ell =1}^{m} 
\mathcal{L}_k  \sigma_{\ell}^i  \cdot \mathcal{L}_k  \sigma_{\ell}^j 
\\
& \quad
+
\Delta \sum_{k =1}^{m}  \sigma_{k}^i \sigma_{k}^j
+
\frac{1}{2}  \Delta^2  \sum_{k =1}^{m}  \sigma_{k}^i  \mathcal{L}_k   b^j
+
\frac{1}{2}  \Delta^2  \sum_{k =1}^{m}  \sigma_{k}^i  \mathcal{L}_0   \sigma_{k}^j
\\
& \quad
+
\frac{1}{2}  \Delta^2  \sum_{k =1}^{m}  \mathcal{L}_k   b^i \cdot \sigma_{k}^j 
+
\frac{1}{4}  \Delta^3  \sum_{k =1}^{m}  \mathcal{L}_k   b^i \cdot \mathcal{L}_k   b^j
+
\frac{1}{4}  \Delta^3  \sum_{k =1}^{m}  \mathcal{L}_k   b^i \cdot \mathcal{L}_0   \sigma_{k}^j
\\
& \quad
+
\frac{1}{2}  \Delta^2  \sum_{k =1}^{m}   \mathcal{L}_0   \sigma_{k}^i  \cdot \sigma_{k}^j  
+
\frac{1}{4}  \Delta^3  \sum_{k =1}^{m}   \mathcal{L}_0   \sigma_{k}^i \cdot \mathcal{L}_k   b^j
+
\frac{1}{4}  \Delta^3  \sum_{k =1}^{m}   \mathcal{L}_0   \sigma_{k}^i \cdot  \mathcal{L}_0   \sigma_{k}^j ,
\end{align*}
where the right-hand side of the last equality is evaluated at $\left( \tau_n ,  Y_{n} \right)$.
Now, using  \eqref{eq:4.10} we obtain the assertion of the lemma.
\end{proof}

\subsection{Proof of Lemma \ref{le:incrementos}} 
\label{subsec:Proofth:le:incrementos}

\begin{proof}
Using the optional sampling theorem, 
together with the fact that $\tau_{n} + \Delta$ is a stopping time,
we obtain 
\begin{equation}
\label{eq:4.10}
\begin{aligned}
 & \mathbb{E} \left(  \left(  Y_{n}^i \left( \tau_n + \Delta \right)  -   Y^i_{n} \right)  \left( Y^j_{n} \left( \tau_n + \Delta \right)  -   Y^{j}_{n} \right) \diagup \mathfrak{F}_{\tau_n} \right)
 \\
 & \quad =
 b^i \left( \tau_n ,  Y_{n} \right)  b^j \left( \tau_n ,  Y_{n} \right)  \Delta^2
 +
 \sum_{k=1}^{m} \sigma^i_k \left(  \tau_n ,  Y_{n}  \right) \sigma^j_k \left( \tau_n ,  Y_{n} \right) \Delta 
\end{aligned}
\end{equation}
for all $i , j = 1,\ldots, d$.
Applying the the triangle inequality gives
\begin{align*}
&
\left\Vert \left(  \frac{ 
  b^i \left( \tau_n ,  Y_{n} \right)  b^j \left( \tau_n ,  Y_{n} \right)  \Delta^2
 +
 \sum_{k=1}^{m} \sigma^i_k \left(  \tau_n ,  Y_{n}  \right) \sigma^j_k \left( \tau_n ,  Y_{n} \right) \Delta
}{ \widetilde{d}_{i,n} \left( \Delta \right) \widetilde{d}_{j,n} \left( \Delta \right) }  \right)_{i , j}  \right\Vert_{\mathbb{R}^{d \times d}} 
/ \Delta
\\
& \leq
\Delta
\left\Vert
\left(
\frac{ b^i \left( \tau_n ,  Y_{n} \right)  b^j \left( \tau_n ,  Y_{n} \right) }
{
\widetilde{d}_{i,n} \left( \Delta \right) \widetilde{d}_{j,n} \left( \Delta \right)
} \right)_{i,j}
\right\Vert_{\mathbb{R}^{d \times d}}
+
 \sum_{k=1}^{m}
 \left\Vert
\left(
\frac{ \sigma^i_k \left(  \tau_n ,  Y_{n}  \right) \sigma^j_k \left( \tau_n ,  Y_{n} \right)  }
{
 \widetilde{d}_{i,n} \left( \Delta \right) \widetilde{d}_{j,n} \left( \Delta \right)
} \right)_{i,j}
\right\Vert_{\mathbb{R}^{d \times d}} 
\\
& =
\Delta
\left\Vert
\left(
\frac{ b^i \left( \tau_n ,  Y_{n} \right)  }
{
\widetilde{ d }_{i,n} \left( \Delta \right) 
} \right)_{i}
\left(
\frac{ b^i \left( \tau_n ,  Y_{n} \right)  }
{
\widetilde{ d }_{i,n} \left( \Delta \right) 
} \right)_{i}^{\top}
\right\Vert_{\mathbb{R}^{d \times d}}
+
 \sum_{k=1}^{m}
 \left\Vert
\left(
\frac{ \sigma^i_k \left(  \tau_n ,  Y_{n}  \right)  }
{
 \widetilde{ d }_{i,n} \left( \Delta \right) 
} \right)_{i}
\left(
\frac{ \sigma^i_k \left(  \tau_n ,  Y_{n}  \right)  }
{
 \widetilde{ d }_{i,n} \left( \Delta \right) 
} \right)_{i}^{\top}
\right\Vert_{\mathbb{R}^{d \times d}}  .
\end{align*}
Therefore, using  \eqref{eq:4.4} yields
\begin{align*}
&
\left\Vert \left(  \frac{ 
  b^i \left( \tau_n ,  Y_{n} \right)  b^j \left( \tau_n ,  Y_{n} \right)  \Delta^2
 +
 \sum_{k=1}^{m} \sigma^i_k \left(  \tau_n ,  Y_{n}  \right) \sigma^j_k \left( \tau_n ,  Y_{n} \right) \Delta
}{ \widetilde{d}_{i,n} \left( \Delta \right) \widetilde{d}_{j,n} \left( \Delta \right) }  \right)_{i , j}  \right\Vert_{\mathbb{R}^{d \times d}} 
\\
& \leq
\Delta^2
\left\Vert
\left(
\frac{ b^i \left( \tau_n ,  Y_{n} \right)  }
{
 \widetilde{ d }_{i,n} \left( \Delta \right) 
} \right)_{i}
\right\Vert_{ \mathbb{R}^d }^2
 +
\Delta  \sum_{k=1}^{m}
 \left\Vert
\left(
\frac{ \sigma^i_k \left(  \tau_n ,  Y_{n}  \right)  }
{  \widetilde{ d }_{i,n} \left( \Delta \right)  } \right)_{i}
\right\Vert_{ \mathbb{R}^d } ^2
\\
& \leq
\Delta^2
\left\Vert
\left(
\frac{ b^i \left( \tau_n ,  Y_{n} \right)  }
{ d_{i,n} \left( \Delta \right) 
} \right)_{i}
\right\Vert_{ \mathbb{R}^d }^2
 +
\Delta  \sum_{k=1}^{m}
 \left\Vert
\left(
\frac{ \sigma^i_k \left(  \tau_n ,  Y_{n}  \right)  }
{ \widetilde{d}_{i,n} \left( \Delta \right)  } \right)_{i}
\right\Vert_{ \mathbb{R}^d } ^2,
\end{align*}
since
$
\widetilde{ d }_{i,n} \left( \Delta \right)  \geq d_{i,n} \left( \Delta \right) > 0
$. 
Hence, the lemma follows from \eqref{eq:4.10}.
\end{proof}

\section{Conclusions} 
We introduce a new general methodology to choose automatically step-sizes of weak numerical schemes for SDEs,
with two main innovative components: i) the matching between the first conditional moments of embedded pairs of weak approximations is controlled by appropriate local discrepancy functions; and ii) the step-size selection process does not involve sampling random variables. 
Guided by the new methodology, two variable step-size weak schemes were derived with orders 1 and 2. 
Numerical experiments illustrate the effectiveness of these adaptive schemes and their capability to overcome instability issues of the conventional weak schemes with fixed step-size. 
Similar to ODEs, 
these experiments for SDEs reveal that the adaptive time-stepping strategies based on local error control perform better than those based on global error.

\section*{References}


\end{document}